\newtheorem{Theorem}{Theorem}[section] 
\newtheorem{Lemma}[Theorem]{Lemma} 
\newtheorem{Corollary}[Theorem]{Corollary}
\newtheorem{Proposition}[Theorem]{Proposition}
\newtheorem{Definition}[Theorem]{Definition}
\newtheorem{Example}[Theorem]{Example}
\newcommand{\ov}[1]{\overline{#1}}
\newcommand{\op}{\operatorname}
\title[Symmetric bilinear forms and vertices]{Symmetric bilinear forms and vertices in characteristic $2$}
\author{John C. Murray}
\address{
   Maynooth University\\
	 Department of Mathematics \& Statistics\\
         National University of Ireland\\
	 Maynooth, Co.~Kildare, Ireland}
\email{John.Murray@maths.nuim.ie}
\date{December 11, 2015, Revisions March 14, 2016}
\dedicatory{In memory of\/ J. A. Green}
\subjclass{Primary 20C20}
\keywords{modular representations of finite groups, characteristic $2$, symmetric and symplectic bilinear forms, $G$-algebras with involution, vertices and sources, block theory}
\begin{document}

\begin{abstract}
Let $G$ be a finite group and let $k$ be an algebraically closed field of characteristic $2$. Suppose that $M$ is an indecomposable $kG$-module which affords a non-degenerate $G$-invariant symmetric bilinear form. We assign to $M$ a collection of $2$-subgroups of $G$ called its symmetric vertices, each of which contains a Green vertex of $M$ with index at most $2$.  If $M$ is irreducible then its symmetric vertices are uniquely determined, up to $G$-conjugacy.

If $B$ is the real $2$-block of $G$ containing $M$, we show that each symmetric vertex of $M$ is contained in an extended defect group of $B$. Moreover, we characterise the extended defect groups in terms of symmetric vertices.

In order to prove these results, we develop the theory of involutary $G$-algebras. This allows us to translate questions about symmetric $kG$-modules into questions about projective modules of quadratic type.
\end{abstract}

\maketitle

\section{Introduction and statement of main results}

Let $G$ be a finite group and let $k$ be an algebraically closed field of characteristic $2$. By a $G$-form on a $kG$-module $M$ we mean a non-degenerate $G$-invariant symmetric or symplectic bilinear form $B:M\times M\rightarrow k$. We then refer to $(M,B)$ as a symmetric or symplectic $kG$-module and say that $M$ has symmetric or symplectic type. We say that $M$ has quadratic type if it affords a symplectic $G$-form that is the polarization of a $G$-invariant quadratic form. Note that each symplectic form is symmetric, as $\op{char}(k)=2$, and each indecomposable $kG$-module of symmetric (or symplectic) type belongs to a real $2$-block of $G$ as it is self-dual.

{W.\hspace{.1cm}Willems} noted that the Krull-Schmidt theorem fails for symmetric $kG$-modules \cite[3.13]{Willems}. This paper is our attempt to understand induction and restriction of symmetric modules in the absence of this fundamental tool. Our main idea is to use the Puig correspondence theorem \cite[19.1]{Thevenaz} for involutary $G$-algebras \cite{LandrockManz} to reduce the study of symmetric modules to symmetric projective modules. The latter can be analysed using the methods of \cite{GowWillemsPIMs}.

Recall that $M$ is $H$-projective, for $H\leq G$, if $M$ is a component of an induced module $\op{Ind}_H^GL$, for some $kH$-module $L$. If $M$ is indecomposable, a vertex of $M$ is a subgroup $V$ of $G$ which is minimal subject to $M$ being $V$-projective. A $V$-source of $M$ is then a $kV$-module $Z$ such that $M$ is a component of $\op{Ind}_V^GZ$. J. A. Green \cite{GreenIndecomposable} proved that $V$ is unique up to $G$-conjugacy, and $Z$ up to $N_G(V)$-conjugacy.

We extend the notion of $H$-projectivity to symmetric modules, saying that $(M,B)$ or $B$ is $H$-projective if $(M,B)$ is a component of an induced symmetric module ${\op{Ind}}_H^G(L,B_L)$, for some symmetric $kH$-module $(L,B_L)$. Here $\op{Ind}_H^G(L,B_L)$ is $\op{Ind}_H^GL$ endowed with the standard induced $G$-form $B_L^G$. We say that $M$ is symmetrically $H$-projective if it affords a $H$-projective symmetric form. If $M$ is indecomposable we define a {\em symmetric vertex} of $M$ to be a subgroup $T$ of $G$ that is minimal subject to $M$ being symmetrically $T$-projective.

The standard proof of the uniqueness of Green-vertices uses Mackey's formula for the restriction of induced modules and the Krull-Schmidt theorem. An alternative uses Mackey's formula for products of relative traces in endomorphism rings. Both approaches fail for symmetric vertices, the latter because the product of two self-adjoint endomorphisms is not self-adjoint. Indeed, it is not hard to find a module which has two conjugacy classes of symmetric vertices:

\begin{Example}\label{Ex:D12} The dihedral group $D_{12}$ has two subgroups $H_1$ and $H_2$ that are isomorphic to $S_3$. Let $T_i$ be a Sylow $2$-subgroup of $H_i$ and let $M_i$ be the unique non-trivial irreducible $kH_i$-module, for $i=1,2$. Then $\op{Ind}_{H_1}^{D_{12}}M_1=\op{Ind}_{H_2}^{D_{12}}M_2$ is a projective indecomposable $kD_{12}$-module $P$. It is easy to see that $T_1$ and $T_2$ are symmetric vertices of $P$, but $T_1$ is not conjugate to $T_2$ in $D_{12}$.
\end{Example}

On the other hand, symmetric vertices are closely related to Green vertices:

\begin{Theorem}\label{T:vertex}
Let $M$ be an indecomposable $kG$-module which has symmetric type and let $T$ be a symmetric vertex of\/ $M$. Then $T$ contains a Green vertex $V$ of\/ $M$ and $[T:V]\leq2$. More precisely, exactly one of the following is true:
\begin{itemize}
 \item[(i)] $M$ has a $G$-form which is non-degenerate on a component of $\op{Res}_V^GM$ that is a source of\/ $M$. Then $T=V$ and $M$ is in the principal $2$-block of $G$.
 \item[(ii)] The sources of\/ $M$ are self-dual but each $G$-form on $M$ is degenerate on any component of $\op{Res}_V^GM$ that is a source of\/ $M$. Then $[T:V]=2$.
 \item[(iii)] The sources of\/ $M$ are not self-dual. Then $[T:V]=2$.
\end{itemize}
\end{Theorem}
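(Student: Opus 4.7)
I would begin by fixing a symmetric $G$-form $B$ on $M$ that is $T$-projective, so $M$ is a component of $\op{Ind}_T^G(L,B_L)$ for some symmetric $kT$-module $(L,B_L)$. Since symmetric $T$-projectivity implies ordinary $T$-projectivity, a Green vertex $V$ of $M$ may, after $G$-conjugation, be taken to satisfy $V\leq T$. Picking an indecomposable summand $N$ of $L$ with $M\mid\op{Ind}_T^GN$, some source $Z$ of $M$ then appears as a summand of $\op{Res}_V^TN$, hence of $\op{Res}_V^GM$. This gives the inclusion $V\subseteq T$; the entire content of the theorem is then to bound $[T:V]$ and match the three dichotomies.

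\textbf{The trichotomy.} The three cases arise from two binary questions: whether $Z$ is self-dual, and, if so, whether \emph{some} $G$-form on $M$ is non-degenerate on a source summand of $\op{Res}_V^GM$. In case (i), if such a non-degenerate restriction exists, then $(Z,B|_Z)$ is a symmetric $kV$-module and Frobenius reciprocity together with the fact that $M$ is a summand of $\op{Res}_V^GM\uparrow^G$ should force $M\mid\op{Ind}_V^G(Z,B|_Z)$ symmetrically; minimality of $T$ then gives $T=V$. The principal-block assertion I would derive by transporting the non-degenerate form on the source through Green correspondence and invoking the Gow--Willems characterisation of symmetric/quadratic projectives \cite{GowWillemsPIMs} in the Brauer correspondent, since only the principal block of the normaliser supports such structure on modules with the trivial $V$-module in their source socle.

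\textbf{Index $2$ in cases (ii) and (iii).} Here the task is to construct a subgroup $V\leq T\leq G$ with $[T:V]=2$ and a $T$-projective symmetric form on $M$. In case (iii), self-duality of $M$ together with $Z\not\cong Z^*$ forces $Z$ and $Z^*$ to be $N_G(V)$-conjugate, so some $g\in N_G(V)$ satisfies ${}^gZ\cong Z^*$; setting $T:=\langle V,g\rangle$, the $kT$-module $\op{Ind}_V^TZ$ carries a natural hyperbolic pairing whose $G$-induction realises a symmetric form on $M$. In case (ii), $Z$ is self-dual but no global form descends non-degenerately; here I would work inside the involutary $G$-algebra $(A,\sigma_B)=(\op{End}_k(M),\sigma_B)$ of \cite{LandrockManz}, locate the symmetric idempotent corresponding to $Z$ inside $A^V$, and show that the obstruction to its non-degeneracy is resolved precisely by adjoining an element of $N_G(V)$ whose action swaps the two isotropic halves of the form on the $Z$-isotypic component, giving $[T:V]=2$ again.

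\textbf{Anticipated difficulty.} The main obstacle is case (ii): constructing an index-$2$ symmetric vertex above $V$ when the source is self-dual yet every global form annihilates it. This is exactly where the Puig correspondence for involutary $G$-algebras advertised in the introduction should pay for itself, by translating the problem into one about symmetric projectives over the Brauer correspondent, to which the methods of \cite{GowWillemsPIMs} apply. A secondary difficulty is ruling out $[T:V]>2$: for this I would use a symmetric version of Mackey's formula to decompose $\op{Res}_V^G\op{Ind}_T^G(L,B_L)$ into a diagonal contribution (responsible for case (i)) and hyperbolic-pair contributions (responsible for cases (ii)--(iii)), so that any extra generators of $T$ over $V$ beyond the single swap element become redundant, contradicting minimality.
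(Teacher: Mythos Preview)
Your overall architecture---locate a Green vertex $V$ inside $T$, then treat the three cases separately---matches the paper. But there is a genuine gap at the step you flag only as a ``secondary difficulty'': bounding $[T:V]\leq 2$. You propose to use the symmetric Mackey formula to decompose $\op{Res}_V^G\op{Ind}_T^G(L,B_L)$ into a diagonal piece and hyperbolic pairs, and then argue that extra generators of $T$ are redundant. The paper explicitly warns that this route is blocked: the symmetric Mackey decomposition exists (Lemma~\ref{L:Mackey}), but the failure of Krull--Schmidt for symmetric modules means you cannot match up orthogonal summands in the way the usual vertex argument requires, and the product of two self-adjoint traces is not self-adjoint, so the standard Mackey-for-traces argument also fails. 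The paper's actual proof (Lemma~\ref{L:GreenSymmetric}) is a genuinely new trick: expand the \emph{triple} product $\op{tr}_V^G(\alpha)\,\op{tr}_T^G(\beta)\,\op{tr}_V^G(\alpha^\sigma)$, classify the $G$-orbits on $G/V\times G/T\times G/V$ as diagonal, symmetric, or antisymmetric, and use the fact that in characteristic $2$ the antisymmetric pairs contribute $j+j^\sigma\in J(E_G(M))$ while diagonal terms would contradict minimality of $T$. A surviving symmetric orbit then hands you an element $t$ with $t^2\in V$ and $T=V\langle t\rangle$. Your Mackey sketch does not produce this element.

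A second, smaller gap is in case (iii). You set $T=\langle V,g\rangle$ for some $g\in N_G(V)$ with ${}^gZ\cong Z^*$, but there is no reason such a $g$ satisfies $g^2\in V$; one only knows $g^2\in N_G(V,Z)$. The paper has to work considerably harder here (Proposition~\ref{P:case3}): it passes to the involutary $N^*$-algebra $E(P_\delta)\times E(P_{\delta^\sigma})$, invokes Lemma~\ref{L:E1xE2} to realise the $N^*$-action via a finite covering group $H$, and then uses Lemma~\ref{L:BtkGe} to locate an honest involution $t\in H$ outside $C_H(O)$ whose image in $N^*$ gives the desired index-$2$ overgroup. Your hyperbolic-pairing construction on $\op{Ind}_V^TZ$ is morally the right object, but only once $[T:V]=2$ has been secured by this extra argument. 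Finally, your principal-block argument in case (i) is too vague; the paper obtains it (Corollary~\ref{C:principal}) by identifying $M$ as the Puig correspondent of $P(k_N)$ and then invoking K\"ulshammer's theory of roots together with Brauer's Third Main Theorem.
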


In particular the symmetric vertices of an indecomposable $kG$-module are $2$-subgroups of $G$. As regards proofs, we show that $[T:V]\leq2$ in Lemma \ref{L:GreenSymmetric}, and treat (i) in Proposition \ref{P:case1} and Corollary \ref{C:principal}, (ii) in Proposition \ref{P:case2} and (iii) in Proposition \ref{P:case3}. We give examples of all three cases in Section \ref{SS:Examples}.

A given indecomposable module can have infinitely many non-isometric forms. Moreover a generic form on a module need not be projective relative to any symmetric vertex of the module. However the forms induced from a symmetric vertex have the following property:

\begin{Theorem}\label{T:TleqH}
Let $M$ be an indecomposable $kG$-module which has symmetric type, let $T$ be a symmetric vertex of\/ $M$ and let $B$ be a $T$-projective $G$-form on $M$. Then $B$ is $H$-projective, for $H\leq G$, if and only if\/ $T\leq_G H$.
\end{Theorem}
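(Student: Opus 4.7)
The ``if'' direction is immediate from transitivity of induction of symmetric modules. If ${}^gT\leq H$ for some $g\in G$, pick a symmetric $kT$-module $(N,B_N)$ such that $(M,B)$ is a symmetric summand of $\op{Ind}_T^G(N,B_N)$. Then
$$\op{Ind}_T^G(N,B_N)\;\cong\;\op{Ind}_{{}^gT}^G\,{}^g(N,B_N)\;\cong\;\op{Ind}_H^G\op{Ind}_{{}^gT}^H\,{}^g(N,B_N),$$
so $(M,B)$ is a symmetric summand of a module induced from $H$, which is the definition of $B$ being $H$-projective.

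For the converse, the plan is to translate the problem into the language of involutary $G$-algebras and then to invoke a symmetric analogue of Green's uniqueness theorem. Let $\widetilde A:=\op{End}_k(\op{Ind}_T^G N)$ be the $G$-algebra equipped with the $G$-equivariant adjoint involution $\sigma$ coming from the induced form $B_N^G$. The hypothesis that $B$ is $T$-projective expresses $(M,B)$ as the image of a primitive $\sigma$-fixed idempotent $e\in\widetilde A^G$, so that $M=e\cdot\op{Ind}_T^G N$ and $B$ is the restriction of $B_N^G$. If $B$ is also $H$-projective, the analogous statement inside $\op{End}_k(\op{Ind}_H^G L)$ with its induced involution yields a second $\sigma$-fixed primitive idempotent $f$ whose image carries $(M,B)$. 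I would then apply the Puig correspondence for involutary $G$-algebras to the pointed group attached to $e$: just as for ordinary $G$-algebras, this identifies $T$ as a defect group for the symmetric form $B$, \emph{minimal} among the subgroups of $G$ from which $(M,B)$ can be induced symmetrically. A Rosenberg-type lemma applied to the $\sigma$-fixed subspace of $\widetilde A^G$ then shows that $e$ cannot be written as a relative trace from any proper subgroup $T'<T$, nor from any subgroup not containing a $G$-conjugate of $T$; combined with the involutary Higman criterion, this yields $T\leq_GH$.

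The principal obstacle is the failure of Krull--Schmidt for symmetric $kG$-modules already exhibited by Willems and by Example \ref{Ex:D12}: one cannot simply compare symmetric decompositions of $\op{Ind}_T^G(N,B_N)$ and $\op{Ind}_H^G(L,B_L)$ directly to extract a common summand with a controlled vertex, because symmetric summands are not unique up to isometry. The entire point of passing to $\widetilde A$ is to relocate the needed uniqueness: at the level of $\sigma$-fixed primitive idempotents in $\widetilde A^G$, Rosenberg's lemma continues to apply and the defect group of $B$ is well-defined up to $G$-conjugacy. Once this translation is in place the rest of the argument runs in close parallel to the classical proof of the corresponding statement for Green vertices.
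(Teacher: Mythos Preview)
Your ``if'' direction is fine and matches the paper.

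Your converse has a genuine gap, and it is precisely the obstacle around which the whole paper is organized. You assert that ``Rosenberg's lemma continues to apply'' to the $\sigma$-fixed subspace of $\widetilde A^G$, and that therefore ``the defect group of $B$ is well-defined up to $G$-conjugacy''. But the $\sigma$-fixed elements do not form a subring: the product of two self-adjoint endomorphisms is not self-adjoint. Consequently the subspaces $\op{tr}_K^G\{\alpha\in \widetilde A^K:\alpha^\sigma=\alpha\}$ are \emph{not} two-sided ideals of $\widetilde A^G$, and the Mackey decomposition of a product $\op{tr}_T^G(\alpha)\cdot\op{tr}_H^G(\beta)$ with $\alpha,\beta$ self-adjoint has summands $\op{tr}_{T\cap{}^gH}^G(\gamma_g)$ in which the $\gamma_g$ need not be self-adjoint. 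So even if one summand is a unit, you cannot conclude that $M$ is \emph{symmetrically} $(T\cap{}^gH)$-projective. The paper states this failure explicitly in the introduction, and Example~\ref{Ex:D12} already shows that symmetric vertices of a module are not unique up to $G$-conjugacy, so no general ``defect group of $B$'' argument of the kind you sketch can work without further input.

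The paper's actual proof (Lemma~\ref{L:TleqH}) avoids this by first establishing the detailed structural Propositions~\ref{P:case1}, \ref{P:case2}, \ref{P:case3}. These guarantee that for some symmetric vertex $S\geq V$ the source module $\op{Ind}_V^SZ$ embeds isometrically into $\op{Res}_S^G(M,B)$, and conversely $M$ embeds isometrically into $\op{Ind}_S^G(\op{Ind}_V^SZ,B_0)$. One then \emph{composes isometries} rather than multiplying traces: restrict $(M,B)$ to $S$, pick off the source summand, induce back up, and apply the symmetric Mackey formula to $\op{Ind}_S^G\op{Res}_S^G\op{Ind}_H^G(L,B_1)$. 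Here Lemma~\ref{L:M->Sum} (an indecomposable orthogonal summand of a perpendicular sum lies in one piece) replaces the failed Rosenberg argument, and minimality of $S$ gives $S\leq{}^gH$. A bootstrap with $H=T$ then forces $S=_GT$. The essential point you are missing is that the structural theorems about sources are not optional scaffolding but the substitute for the ideal-theoretic machinery that no longer applies.
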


In particular if $M$ has a unique form, up to isometry, then it has a unique symmetric vertex, up to $G$-conjugacy. We prove this theorem in Lemma \ref{L:TleqH}. In Example \ref{Ex:D12}, each $T_1$-projective form on $P$ is not $T_2$-projective, and vice-versa.

The trivial $kG$-module is of symmetric type and its symmetric vertices are just the Sylow $2$-subgroups of $G$. P. Fong \cite{Fong} noted that each non-trivial self-dual irreducible $kG$-module has a unique symmetric $G$-form, up to scalars, and this form is symplectic. So Theorem \ref{T:TleqH} implies:

\begin{Theorem}\label{T:vertexIrreducible}
The symmetric vertices of a self-dual irreducible $kG$-module are uniquely determined up to $G$-conjugacy.
\end{Theorem}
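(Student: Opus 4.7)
The plan is to split into the trivial and nontrivial cases, and in the nontrivial case reduce to Theorem~\ref{T:TleqH} via Fong's uniqueness result stated immediately before the statement.

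First, if $M$ is the trivial $kG$-module, then as noted in the paragraph preceding the theorem its symmetric vertices are precisely the Sylow $2$-subgroups of $G$, which form a single $G$-conjugacy class by Sylow's theorem, so this case is immediate.

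For the main case, I assume $M$ is a nontrivial self-dual irreducible $kG$-module. Fong's observation tells us that $M$ carries a unique $G$-invariant symmetric bilinear form up to a nonzero scalar. Let $T_1$ and $T_2$ be two symmetric vertices of $M$, and choose $T_i$-projective $G$-forms $B_i$ on $M$ ($i=1,2$) witnessing the two symmetric projectivities. Fong's uniqueness forces $B_1 = \lambda B_2$ for some $\lambda \in k^{\times}$. Since scaling the inducing form commutes with the induction-of-forms construction, if $(M, B_2)$ is a component of $\op{Ind}_{T_2}^G(L, B_L)$ then $(M, B_1) = (M, \lambda B_2)$ is a component of $\op{Ind}_{T_2}^G(L, \lambda B_L)$, so the single form $B_1$ is simultaneously $T_1$-projective and $T_2$-projective. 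Now I apply Theorem~\ref{T:TleqH} with symmetric vertex $T_1$ and subgroup $H := T_2$ to conclude $T_1 \leq_G T_2$; swapping the roles of $T_1$ and $T_2$ (and rescaling in the same way) gives $T_2 \leq_G T_1$. Hence $|T_1| = |T_2|$, and a $G$-conjugate of $T_1$ contained in $T_2$ must coincide with $T_2$, so $T_1$ and $T_2$ are $G$-conjugate.

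The substantive input is Fong's uniqueness of the form; once that is in hand the result is essentially a one-line deduction from Theorem~\ref{T:TleqH}, and the only minor technical point is that a nonzero rescaling of an $H$-projective symmetric form remains $H$-projective, which is immediate from the construction of the induced form. This also highlights why the phenomenon of Example~\ref{Ex:D12} cannot occur here: there the form on the projective indecomposable $P$ is not unique up to scalars, whereas for self-dual irreducibles uniqueness is available.
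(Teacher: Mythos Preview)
Your proof is correct and follows essentially the same route as the paper's own argument (given as Lemma~\ref{L:vertexIrreducible}): uniqueness of the $G$-form up to scalars forces any $T_1$-projective form to also be $T_2$-projective, and then two applications of Theorem~\ref{T:TleqH} yield $T_1\leq_G T_2$ and $T_2\leq_G T_1$. The paper's version is slightly terser in that it does not separate out the trivial module (which also has a unique $G$-form up to scalars, so the case split is harmless but unnecessary), and it suppresses the remark about scaling, but the substance is identical.
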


We now discuss applications to real $2$-blocks. In fact this paper was motivated by our attempt to find an analogue of the extended defect group of a real $2$-block for self-dual indecomposable $kG$-modules.

Recall that $kG$ is a $k(G\times G)$-module. Let ${\mathcal B}$ be an indecomposable component of this module. Then ${\mathcal B}$ is a block of $kG$, or a $2$-block of $G$. R. Brauer defined a defect group of ${\mathcal B}$ to be a Sylow $2$-subgroup $D$ of the centralizer of a certain element of $G$. J. A. Green showed that the diagonal subgroup $\Delta D\leq G\times G$ is a vertex of ${\mathcal B}$, as $k(G\times G)$-module.

Now $g\rightarrow g^{-1}$, for $g\in G$, extends to an involutary $k$-algebra anti-automorphism $^o$ of $kG$ called the contragredient map. Assume that ${\mathcal B}$ is invariant under $^o$. Then ${\mathcal B}$ is a real $2$-block of $G$. R. Gow defined an extended defect group of ${\mathcal B}$ to be a Sylow $2$-subgroup $E$ of the extended centralizer of a certain real element of $G$ \cite{GowReal2Blocks}. If ${\mathcal B}$ is the principal block $E=D$ is a Sylow $2$-subgroup of $G$. Otherwise $D\leq E$ and $[E:D]=2$.

In \cite{MurrayStrong} we used $E$ to determine a vertex of ${\mathcal B}$, regarded as a module for the wreath product $G\wr C_2$ of $G$ with a cyclic group of order $2$. Now let $B_1$ be the $G\times G$-form on $kG$ with respect to which the elements of $G$ form an orthonormal basis.

\begin{Theorem}\label{T:vertexBlock}
Let ${\mathcal B}$ be a real $2$-block of $G$, with extended defect group $E$. Then
\begin{itemize}
 \item[(i)] Each symmetric vertex of an indecomposable ${\mathcal B}$-module is $G$-conjugate to a subgroup of $E$.
 \item[(ii)] Some self-dual irreducible ${\mathcal B}$-module has symmetric vertex $E$.
 \item[(iii)] The restriction of $B_1$ to ${\mathcal B}$ is non-degenerate and $\Delta E$-projective.\\So $\Delta E$ is a symmetric vertex of\/ ${\mathcal B}$, regarded as a $G\times G$-module.
\end{itemize}
\end{Theorem}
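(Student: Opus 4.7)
My plan is to prove (iii) first and then deduce (i) and (ii) by descending to the components of ${\mathcal B}$.

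Non-degeneracy of $B_1|_{{\mathcal B}}$ in (iii) is a direct computation. Writing $B_1(x,y)$ as the coefficient of $1$ in $xy^o$, the contragredient is an adjoint: $B_1(x,ay)=B_1(a^ox,y)$. Since ${\mathcal B}$ is real, its block idempotent $e$ satisfies $e^o=e$, so for $x=ex\in{\mathcal B}$ and $z\in kG$,
\[
B_1(x,(1-e)z) = B_1((1-e)^ox,z) = B_1((1-e)x,z) = 0.
\]
Hence $B_1(x,z)=B_1(x,ez)$, so any $x\in{\mathcal B}$ orthogonal to ${\mathcal B}$ is orthogonal to all of $kG$ and must vanish.

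For $\Delta E$-projectivity, I would invoke the main result of \cite{MurrayStrong}, which identifies a vertex of ${\mathcal B}$, as a $k(G\wr C_2)$-module, in terms of $E$. The involutary $(G\times G)$-algebra framework of the earlier sections provides a dictionary between symmetric $(G\times G)$-modules and modules for the wreath product, in which the $C_2$-swap implements the adjoint involution of the form. The wreath product vertex then translates into $\Delta E$-projectivity of $B_1|_{{\mathcal B}}$. That $\Delta E$ is actually a symmetric vertex, not merely an upper bound, follows by combining this with Theorem~\ref{T:vertex}: the Green vertex of ${\mathcal B}$ is $\Delta D$; any symmetric vertex has index at most $2$ over $\Delta D$; and case~(i) of Theorem~\ref{T:vertex} only occurs in the principal block, where $E=D$. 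In the non-principal case $[E:D]=2$ and $\Delta E$ supplies the required index-$2$ extension.

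For (i), let $M$ be indecomposable in ${\mathcal B}$ with Green vertex $V\leq_G D$ and symmetric vertex $T$. If $T=V$ we are done since $V\leq_G D\leq E$. If $[T:V]=2$ then ${\mathcal B}$ is non-principal, so $[E:D]=2$, and $T/V$ implements either a self-duality on a source of $M$ that fails to extend to a global $G$-form, or a duality isomorphism between a non-self-dual source and its dual (cases (ii) and (iii) of Theorem~\ref{T:vertex}). In either situation the coset $T\setminus V$ corresponds, via the induced involution on the source algebra of ${\mathcal B}$, to the non-trivial coset $E\setminus D$, and this embeds $T$ into a $G$-conjugate of $E$. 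Part (ii) is then obtained by exhibiting a self-dual simple ${\mathcal B}$-module with symmetric vertex $E$, produced by lifting via the involutary Puig correspondence an appropriately chosen quadratic-type simple of the source algebra whose vertex under the involution is the full group $E$; such a module must exist since the $E/D$-action on the source algebra is non-trivial in the non-principal case and must be faithfully reflected on some self-dual simple summand.

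The hard step will be (i): precisely matching the coset $T\setminus V$ with $E\setminus D$ through the involutary source algebra. Once the source algebra language is in place, this amounts to a restatement of the definition of the extended defect group, but making the translation rigorous requires working through the symmetric Mackey/Puig machinery developed earlier in the paper, and Example~\ref{Ex:D12} already warns that the symmetric vertex need only be determined up to a non-unique $G$-conjugacy class.
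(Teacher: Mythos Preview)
Your plan to deduce (i) from (iii) does not work, and the sketch you give for (i) has a genuine gap. The paper's proof of (i) is direct and does not go through (iii) at all: one writes the block idempotent as $e_{\mathcal B}=\sum_i\alpha_i C_i^+$ with the $C_i$ $2$-regular, uses Gow's result that one may choose representatives $c_i$ with $D_i\leq D_j$ and (for real $C_i$) $E_j=D_jE_i$, and sets $\theta=\sum_i\alpha_i\op{tr}_{D_i}^{E_j}(c_i)\in kG^{E_j}$. Then $\theta^o=\theta$ and $e_{\mathcal B}=\op{tr}_{E_j}^G(\theta)$. Pushing this through the structural map $\pi:kG\to E(M)$ of any symmetric ${\mathcal B}$-module gives $1_M=\op{tr}_{E_j}^G(\pi(\theta))$ with $\pi(\theta)$ self-adjoint, so the symmetric Higman criterion (Proposition~\ref{P:Higman}) forces every $G$-form on $M$ to be $E_j$-projective. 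No source-algebra matching is needed.

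Your proposed route for (i) fails for two reasons. First, the implication ``$[T:V]=2\Rightarrow{\mathcal B}$ non-principal'' is false: Theorem~\ref{T:vertex} only asserts that case~(i) forces the principal block, not conversely; plenty of principal-block indecomposables fall under cases (ii) or (iii). Second, and more seriously, when $V$ is a \emph{proper} subgroup of $D$ there is no mechanism to ``match the coset $T\setminus V$ with $E\setminus D$ via the source algebra of ${\mathcal B}$'': the source algebra lives at level $D$, not $V$, and an index-$2$ overgroup of $V$ need not sit inside $E$ just because $V\leq D$. You yourself flag this as the hard step, but the paper simply avoids it.

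For (iii) the paper again uses an explicit trace: with $d_i\in\langle c_i\rangle$ the square root of $c_i$, set $\Theta=\sum_i\alpha_i\op{tr}_{\Delta D_i}^{\Delta E_j}(d_i\otimes d_i^{-1})$ and check $\Theta^\sigma=\Theta$ and $\op{tr}_{\Delta E_j}^{G\times G}(\Theta)=e_{\mathcal B}$. No wreath-product dictionary is invoked; the \cite{MurrayStrong} result is motivation, not an ingredient. Your non-degeneracy argument for $B_1|_{\mathcal B}$ is fine (the paper instead quotes Lemma~\ref{L:GowWillems}).

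For (ii) the paper does not use Puig correspondence at all: it quotes \cite[1.4(v)]{GowWillemsPIMs} to produce a self-dual irreducible ${\mathcal B}$-module $M$ whose Green vertex is the full defect group $D$, then uses (i) and Theorem~\ref{T:vertex} to squeeze its symmetric vertex between $D$ and $E$.
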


We prove (i) in Lemma \ref{L:part(i)}, (ii) in Corollary \ref{C:part(ii)} and (iii) in Lemma \ref{L:part(iii)}. Note that (i) and (ii) give a new proof of Gow's result \cite[2.2]{GowReal2Blocks} that the extended defect groups of a real $2$-block of $G$ are uniquely determined up to $G$-conjugacy. However we do not know if all symmetric vertices of ${\mathcal B}$ are $G\times G$-conjugate to $\Delta E$.

Regarding the rest of the paper, we give examples of the failure of the Krull-Schmidt Theorem for symmetric $kG$-modules in \ref{SS:Krull-Schmidt}. The rest of Section \ref{S:General} contains basic results on bilinear forms and involutary $k$-algebras. Lemma \ref{L:idempotent_lifting} is a lifting result for self-adjoint idempotents which is a mild generalization of \cite[1.4]{LandrockManz}. We shall use it extensively in the rest of the paper.

We consider adjoints, endomorphisms and bilinear forms in Section \ref{S:Adjoints}. Lemma \ref{L:PerfectGPairing} gives a bijection between $G$-endomorphisms and perfect $G$-pairings on pairs of submodules. Lemma \ref{L:projection} shows that self adjoint idempotents correspond to orthogonal direct summands. In Lemma \ref{L:GowWillems} we give a new proof of \cite[Proposition]{GowWillemsGreen} on decompositions of symmetric modules. Finally Lemma \ref{L:PGL(M,sigma)} shows that in the presence of a $G$-involution each projective representation lifts to a representation.

In Section \ref{S:Induction} we undertake a detailed exploration of form induction. This notion has appeared in many places, for example \cite{GowWillemsGreen}, \cite{Nebe} and \cite{Pacifici}. D. Higman \cite{Higman} provided a criterion for $H$-projectivity using the relative trace map. We define the notion of $H$-projectivity for symmetric forms in \ref{SS:Induction} and prove a symmetric version of Higman's Criterion in Lemma \ref{L:Higman}.

The deepest results of this paper are contained in Section \ref{S:SymmetricVertices}. In \ref{SS:Projective} we consider symmetric bilinear forms on projective $kG$-modules. These can be studied in some detail because of previous work such as \cite{GowWillemsPIMs} and \cite{MurrayPFS}. For example each projective indecomposable module which has symplectic type is of quadratic type. We give a new criterion for this to occur in Proposition \ref{P:quadraticPIM}, simplified here as:

\begin{Theorem}\label{T:quadraticPIM}
Let $M$ be a non-trivial self-dual irreducible $kG$-module and let $B$ be a symplectic $G$-form on $M$. Then $P(M)$ has quadratic type if and only if there is an involution $t\in G$ such that $B(tm,m)\ne0$, for some $m\in M$.
\end{Theorem}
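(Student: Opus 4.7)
The plan is to set up an involutary $G$-algebra framework around the projective indecomposable $P(M)$. Using Lemma~\ref{L:idempotent_lifting}, choose a self-adjoint primitive idempotent $e \in kG$ with $P(M) \cong kGe$, so $A := ekGe \cong \op{End}_{kG}(P(M))$ is a local $k$-algebra equipped with the involution inherited from the contragredient map $^o$ on $kG$. Identifying $G$-invariant bilinear forms on $P(M)$ with elements of $A$ by means of the standard symmetric form $B_1$ on $kG$ (for which $G$ is an orthonormal basis), the conditions ``symmetric'', ``symplectic'' and ``polarizes a quadratic form'' become conditions on elements of $A$ relative to $^o$. A direct calculation gives $B_1(v,v) = \epsilon(v)^2$ for $v \in kG$, where $\epsilon$ is the augmentation; as $M \ne k$ forces $\epsilon(e) = 0$, the restriction $B_1|_{P(M)}$ is a canonical non-degenerate $G$-invariant symplectic form, which provides a fixed reference point in the analysis.

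For the reverse implication, suppose an involution $t \in G$ satisfies $B(tm_0, m_0) \ne 0$ for some $m_0 \in M$. The self-adjoint element $e(1+t)e \in A^{\mathrm{sym}}$ has image in the residue field $A/\mathrm{rad}(A) = k$ equal to the scalar by which $1+t$ acts on the simple quotient $M$ of $P(M)$; via the perfect pairing between head and socle of $P(M)$ induced by $B_1|_{P(M)}$, this scalar is a nonzero multiple of $B(t m_0, m_0)$. Exploiting the nilpotency $(1+t)^2 = 0$ together with Lemma~\ref{L:idempotent_lifting}, I would construct from $e(1+t)e$ an explicit $G$-invariant quadratic form $Q$ on $P(M)$ polarizing to a non-degenerate symplectic form; concretely, restricting to $\langle t \rangle$ yields a quadratic form on $\op{Res}_{\langle t\rangle} P(M)$ (the relevant free $k\langle t\rangle$-module has the standard hyperbolic quadratic form), and Higman's criterion (Lemma~\ref{L:Higman}) then lifts this to the $G$-invariant setting.

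For the forward implication, suppose $(P(M), Q)$ is of quadratic type with non-degenerate polarization $B_P$. The identity $Q((1+t)v) = B_P(v, tv)$ holds for every $v \in P(M)$ and every involution $t \in G$. Assume for contradiction that $B(tm, m) = 0$ for every involution $t$ and every $m \in M$. Evaluating the identity on $v$ projecting to $\bar v \in P(M)/\mathrm{rad}(P(M)) = M$, and using the perfect head--socle pairing to identify values of $B_P$ with values of $B$, one finds that $Q$ restricted to lifts of $M$ is constrained to reduce to a $G$-invariant Frobenius-semilinear map $M \to k$. Since $M$ is non-trivial and irreducible, $\op{Hom}_{kG}(M^{(F)}, k) = 0$, so this reduction must vanish; tracing this vanishing back through the filtration $0 \subset M \subset M^\perp \subset P(M)$ and using the non-degeneracy of $B_P$ on each successive quotient forces $B_P$ itself to be degenerate, which is the desired contradiction.

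The main obstacle is the forward direction: extracting a concrete witness involution $t \in G$ from the abstract existence of the quadratic form $Q$ on $P(M)$. This requires careful bookkeeping of how $Q$ interacts with the two isotropic pieces (the socle $M$ and its complement modulo the radical) of the non-degenerate polarization $B_P$, together with the cohomological vanishing $\op{Hom}_{kG}(M^{(F)}, k) = 0$ that rules out a nonzero $G$-invariant Frobenius-semilinear descent of $Q$ onto $M$.
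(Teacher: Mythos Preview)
Your setup breaks at the very first step. You claim to choose a $^o$-invariant primitive idempotent $e\in kG$ with $kGe\cong P(M)$, and then assert that $B_1|_{P(M)}$ is a non-degenerate symplectic $G$-form. But this is exactly what Lemma~\ref{L:Fong} rules out: for non-trivial self-dual irreducible $M$, the form $B_1$ is \emph{degenerate} on every direct summand of $kG$ isomorphic to $P(M)$, equivalently no primitive idempotent in the point of $M$ satisfies $e^o=e$. The reason is that $^o$ induces on $E(M)$ the adjoint of the symplectic form $B$, and a symplectic space has no $1$-dimensional non-degenerate subspace, hence no $\sigma$-invariant primitive idempotent in $E(M)$ to lift. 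Once $e^o\ne e$, the contragredient map does not restrict to an involution on $ekGe$, so your algebra $A$ has no canonical involution and the whole bookkeeping of ``self-adjoint elements of $A$'' collapses.

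The paper's proof (Proposition~\ref{P:quadraticPIM}, together with Lemma~\ref{L:BtkGe} and Corollary~\ref{C:BtkGe}) sidesteps this precisely by replacing $\sigma$ with the twisted involution $\sigma_t$, whose adjoint form on $M$ is $\hat B(m_1,m_2)=B(tm_1,m_2)$. This form is \emph{diagonalizable} exactly when $B(tm,m)\ne 0$ for some $m$, and then a $\sigma_t$-invariant primitive idempotent does exist in $E(M)$ and lifts to $kG$, yielding a copy of $P(M)$ on which $B_t$ is non-degenerate. The bridge between ``$B_t$ non-degenerate on $P(M)$'' and ``$P(M)$ has quadratic type'' is the input from \cite{GowWillemsPIMs}, which you never invoke; without it you are trying to construct the quadratic form $Q$ by hand, and your sketch (``restricting to $\langle t\rangle$ \dots\ Higman's criterion then lifts this'') does not explain how a $G$-invariant \emph{quadratic} form arises, since Lemma~\ref{L:Higman} concerns bilinear, not quadratic, forms. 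Your forward direction also stumbles: $B_P(v,tv)$ for $v\in P(M)$ does not reduce to $B(\bar v,t\bar v)$ on the head, because $B_P$ pairs head with socle, not head with head, so the contradiction you describe does not materialise.
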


We interpret this result as follows. Suppose that $P(M)$ is not of quadratic type and let $t\in G$ be an involution. Then the image of $t$ in $E(M)$ is an {\em alternate} linear transformation, in the terminology of \cite{GowLaffey}. Equivalently $M=M_1\dot+ M_2$, where $M_1$ and $M_2$ are isomorphic $k\langle t\rangle$-modules that are totally isotropic with respect to $B$.

We have already indicated that the proof of Theorem \ref{T:vertex} is scattered among Sections \ref{SS:2applications}, \ref{SS:Part1}, \ref{SS:Part2}, and \ref{SS:Part3}, and proofs of Theorems \ref{T:TleqH} and \ref{T:vertexIrreducible} are given in Section \ref{SS:Proofs}. We would like to highlight the following version of Proposition \ref{P:PuigP(k_N)}:

\begin{Theorem}\label{T:Scott}
Let $V$ be a $2$-subgroup of $G$ and let $Z$ be an indecomposable $kV$-module with vertex $V$ which is of symmetric type. Then among the  indecomposable components of\/ $\op{Ind}_V^GZ$ which have vertex $V$, there is a distinguished $kG$-module $M$ characterised by any one of:
\begin{itemize}
 \item[(a)] $M$ has multiplicity one as a component of $\op{Ind}_V^GZ$.
 \item[(b)] $M$ has odd multiplicity as a component of $\op{Ind}_V^GZ$.
 \item[(c)] $Z$ is a $B$-component of $\op{Res}_V^GM$, for some $G$-form $B$ on $M$.
 \item[(d)] Each $V$-projective form on $\op{Ind}_V^GZ$ is non-degenerate on $M$.
\end{itemize}
\end{Theorem}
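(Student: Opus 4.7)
The plan is to apply the Puig correspondence for involutary $G$-algebras to $E := \op{End}_k(N)$, where $N := \op{Ind}_V^G Z$ carries the induced form $\tilde B := B_Z^G$ and its adjoint involution $\sigma$. I would identify a distinguished $\bar\sigma$-fixed primitive idempotent in the Brauer quotient $\bar E(V) := E^V/\sum_{W<V}\op{Tr}_W^V E^W$, whose Puig correspondent in $E^G$ produces $M$; the four characterizations are then verified by exploiting the involutary structure.

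By Mackey, $\op{Res}_V^G N \cong Z \oplus U$, where every indecomposable summand of $U$ has vertex strictly smaller than $V$, since the non-trivial double cosets $VgV$ give modules induced from $V \cap V^g \lneq V$. Because $\tilde B$ restricts to $B_Z$ on the canonical copy of $Z$, this inclusion is orthogonal, and Lemma~\ref{L:projection} yields a self-adjoint projection $\pi_Z \in E^V$ onto $Z$. The corner $\pi_Z E^V \pi_Z \cong \op{End}_{kV}(Z)$ is a local ring, and since $Z$ has vertex $V$ the relative traces $\op{Tr}_W^V(E^W)$ for $W < V$ generate its maximal ideal, so $\bar\pi_Z \in \bar E(V)$ is a $\bar\sigma$-fixed primitive idempotent. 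Using Lemma~\ref{L:idempotent_lifting} together with the Puig correspondence, $\bar\pi_Z$ lifts to a self-adjoint primitive idempotent $e \in E^G$; setting $M := eN$ produces an indecomposable vertex-$V$ component of $N$ on which $\tilde B$ is non-degenerate, with the canonical $Z \subseteq M|_V$ as a $\tilde B$-orthogonal summand, which is (c). For (d), the symmetric Higman criterion (Lemma~\ref{L:Higman}) writes any $V$-projective form on $N$ as $\tilde B(u\cdot,\cdot)$ for a $\sigma$-symmetric invertible $u \in E^G$; self-adjointness of $e$ keeps the form non-degenerate on $M$. For (a), the multiplicity of $M$ in $N$ equals $\dim_k \bar\pi_Z(\bar E(V)/\op{rad})\bar\pi_Z$, which is $1$ by primitivity and locality of the corner.

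The main obstacle is (b): showing every other vertex-$V$ component of $N$ has even multiplicity. Components not isomorphic to their duals are interchanged by $\bar\sigma$ on Puig classes of $\bar E(V)$ and appear in paired conjugacy classes, contributing even total multiplicity to the self-dual $N$; this case is routine. The delicate case is self-dual vertex-$V$ components $M' \not\cong M$: I would argue that their associated $\bar\sigma$-fixed primitive idempotents lie in Wedderburn factors of $\bar E(V)/\op{rad}$ where $\bar\sigma$ restricts as a hyperbolic (transpose-type) involution on an even-dimensional matrix algebra, forcing the multiplicity of $M'$---the dimension of the associated simple $\bar E(V)$-module---to be even. This rests on the symplectic versus quadratic dichotomy for involutions on semisimple algebras developed in Section~\ref{SS:Projective}, with $\bar\pi_Z$ isolated as the unique quadratic primitive idempotent, ultimately reflecting the genuine non-degeneracy of $B_Z$ on $Z$.
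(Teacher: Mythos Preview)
Your Mackey step is wrong, and the error propagates through the rest of the argument. You claim that $\op{Res}_V^G N \cong Z \oplus U$ with every summand of $U$ having vertex strictly smaller than $V$, ``since the non-trivial double cosets $VgV$ give modules induced from $V\cap V^g \lneq V$''. But for $g\in N_G(V)\setminus V$ the double coset $VgV$ is non-trivial while $V\cap {}^gV=V$, and the corresponding Mackey summand is ${}^gZ$, which has vertex $V$. In fact the number of copies of $Z$ in $\op{Res}_V^G N$ (modulo summands of smaller vertex) is $|N|$, where $N=N_G(V,Z)/V$. Consequently $\bar\pi_Z$ is just one of $|N|$ conjugate primitive idempotents in the relevant quotient, and there is no direct ``lift $\bar\pi_Z$ to a primitive idempotent in $E^G$'' as you describe: the Puig correspondence relates primitive idempotents in $E^G$ of defect $V$ to $N$-\emph{invariant} primitive idempotents in the multiplicity algebra $E(P_\Delta)$, not to a single primitive idempotent like $\bar\pi_Z$.

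The paper's route (Proposition~\ref{P:PuigP(k_N)}, together with Lemmas~\ref{L:kgamma}, \ref{L:P(k)} and \ref{L:Fong}) fixes exactly this. One shows, via Lemma~\ref{L:kgamma}, that the twisted group algebra is untwisted and that the multiplicity module $P_\Delta$ is the regular $kN$-module, with the induced involution corresponding to the standard form $B_1$ on $kN$. The Puig correspondence then identifies vertex-$V$ components of $\op{Ind}_V^G Z$ with principal indecomposable $kN$-modules, and the distinguished $M$ is the correspondent of $P(k_N)$. Now (a) and (b) are read off from $kN\cong\bigoplus_S P(S)^{\dim S}$: the multiplicity of $M$ is $\dim k_N=1$, while for $S\not\cong k_N$ either $S\not\cong S^*$ (paired, contributing even multiplicity) or $S\cong S^*$ is non-trivial, whence $\dim S$ is even by Fong's Lemma. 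Your formula ``multiplicity of $M$ $=\dim\bar\pi_Z(\bar E(V)/\op{rad})\bar\pi_Z$'' computes the wrong thing: that corner is $1$-dimensional for \emph{every} primitive idempotent, so it cannot distinguish $M$ from the other components. Likewise your argument for (d)---that self-adjointness of $e$ and invertibility of $u$ force $eue$ to be a unit in $eE^Ge$---does not follow; the paper instead shows that any $\sigma_0$-invariant primitive idempotent in $E_N(kN)$ must cut out $P(k_N)$, by Lemma~\ref{L:Fong}.
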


If $Z$ is the trivial $kV$-module, then $M$ is just the Scott-module with vertex $V$. So this theorem extends the notion of a Scott-module from permutation modules to a much larger class of modules in characteristic $2$. Like a Scott module, $M$ will belong to the principal $2$-block of $G$, but unlike a Scott module, $M$ need not have a trivial submodule nor a trivial quotient module.

Unless stated otherwise, $k$ is an algebraically closed field of characteristic $2$. We relax the hypotheses on $k$ in some sections, as will be indicated. Everywhere $G$ is a finite group of even order. All vector spaces and algebras are finite dimensional and all $kG$-modules are left $kG$-modules. If $M$ is a $k$-vector space then $E(M)$ is its endomorphism ring, and if $M$ is a $kG$-module then $P(M)$ is its projective cover.

\section{Generalities}\label{S:General}

Throughout this section $k$ is a field and $M$ is a $k$-vector space.

\subsection{Failure of the Krull-Schmidt theorem}\label{SS:Krull-Schmidt}

This short subsection demonstrates one of the difficulties we face when dealing with symmetric bilinear forms in characteristic $2$: certainly each symmetric $kG$-module decomposes as an orthogonal sum of indecomposables. However the symmetric modules occurring in such a sum are not uniquely determined up to isometry. In order to get to the point quickly, we defer defining all terms until later in the paper.

If $\op{char}(k)\ne2$, R. Gow observed that each self-dual indecomposable $kG$-module has either symmetric or symplectic type. In \cite[3.5]{Willems} W.~Willems showed that such a module has a unique symmetric or symplectic form, up to isometry. Moreover he proved an analogue of the Krull-Schmidt theorem in \cite[3.11]{Willems}: the decomposition of a symmetric or symplectic $kG$-module into orthogonal indecomposables is unique up to isometry.

For the rest of \ref{SS:Krull-Schmidt} we suppose that $\op{char}(k)=2$. Then if $M$ is an indecomposable $kG$-module it is easy to verify that its paired module $(M^*\oplus M,P)$ (defined in \ref{SS:Endomorphisms} below) is orthogonally indecomposable. We use this to construct an example that shows that the Krull-Schmidt theorem fails for symmetric $kG$-modules:

\begin{Example}
Let $M$ be an indecomposable $kG$-module and let $B$ be a $G$-form on $M$. Then the diagonal submodule of\/ ${(M,B)\perp(M,B)\perp(M,B)}$ is isomorphic to $(M,B)$ and its orthogonal complement is isomorphic to $(M^*\oplus M,P)$. So
$$
(M,B)\perp(M,B)\perp(M,B)\cong(M,B)\perp(M^*\oplus M,P),
$$
yet even the number of indecomposable components is different on both sides.
\end{Example}

We do not give proofs of these assertions, except to note that the two decompositions exist when $G$ is trivial and $M\cong k$. The general case then follows easily from Lemma \ref{L:idempotent_lifting} applied in the context of Lemma \ref{L:GowWillems}.

Now let $B_1$ be the standard $G$-form on the regular module $kG$. For $x\in kG$, set $B_x(y,z):=B_1(yx,z)$, for all $y,z\in kG$. Then $B_x$ is symplectic if and only if $B_1(x,1)=0$, and non-degenerate if and only if $B_1(x,x)\ne0_k$. See Section \ref{SS:Projective} for details. We use these facts in our second example:

\begin{Example}{\cite[3.13]{Willems}}
Let $V_4=\{1,r,s,t\}$ be the Klein $4$-group, and let $M$ be the regular $kV_4$-module. Consider the symplectic $kV_4$-module $(M,B_r)\perp(M,B_s)$. Then for each $\alpha,\beta\in k$ with $\alpha\ne\beta$ we have
$$
(M,B_r)\perp(M,B_s)=(M,B_{\alpha r+\beta s})\perp(M,B_{\beta r+\alpha s}).
$$
But for units $x,y\in kV_4$, $(M,B_x)\cong(M,B_y)$ if and only if $y=\lambda x$ for some $\lambda\in k^\times$. So $(M,B_r)\perp(M,B_s)$ has an infinite number of orthogonal decompositions, no two of which have a common indecomposable component, even up to isometry.
\end{Example}

\subsection{Involutary algebras and idempotent lifting}\label{SS:idempotent_lifting}

Let $A$ be a $k$-algebra such that $A/J(A)$ is split semi-simple. An involution on $A$ is a $k$-linear automorphism $a\rightarrow a^\tau$ of $A$ such that $\tau^2=\op{id}_A$ and $(ab)^\tau=b^\tau a^\tau$ for all $a,b\in A$. Note in particular that $\tau$ fixes each scalar multiple of $1_A$. Following Landrock and Manz \cite{LandrockManz} we call $(A,\tau)$ an involutary $k$-algebra.

Recall that $A$ is a $G$-algebra if there is a homomorphism $G\rightarrow\op{Aut}(A)$, written $a\rightarrow{^g}a$ for $g\in G$ and $a\in A$. If in addition $\tau$ is an involution on $A$ which commutes with the $G$-action, we call $(A,\tau)$ an involutary $G$-algebra. If $H\leq G$, we use $A^H$ to denote the subalgebra of $H$-fixed points in $A$.

We prove a mild generalization of an idempotent lifting result \cite[Lemma 1.4]{LandrockManz}. This will be used frequently, often without explicit reference, later in the paper. If\/ $I$ is a $2$-sided ideal of\/ $A$, set $\ov{A}=A/I$ and $\ov{a}=a+I$ in $\ov{A}$. This notation is ambiguous, but should be clear from the context. 

\begin{Lemma}\label{L:idempotent_lifting}
Let $(A,\tau)$ be an involutary $k$-algebra, let $I$ be a $\tau$-invariant $2$-sided ideal of\/ $A$ and let $a\in A\backslash I$.
\begin{itemize}
 \item[(i)] If $a$ is a primitive $\tau$-invariant idempotent in $A$, then $\ov{a}$ is a primitive $\tau$-invariant idempotent in $A/I$.
 \item[(ii)] If\/ $\ov{a}$ is a $\tau$-invariant idempotent in $\ov{A}$ then there is a $\tau$-invariant idempotent $e\in aAa^\tau$ with $\ov{e}=\ov{a}$ and $e=p(aa^\tau)$ for some polynomial $p$ with $p(0)=0$.
 \item[(iii)] If\/ $\ov{a}$ is a $\tau$-invariant primitive idempotent in $\ov{A}$, then there is a $\tau$-invariant primitive idempotent $f$ in $A$ such that $\ov{f}=\ov{a}$.
\end{itemize}
\begin{proof}
Note that $(\ov{A},\tau)$ is an involutary $k$-algebra, via $\ov{u}^{\,\tau}:=\ov{u^\tau}$, for all $u\in A$. 

Assume the hypothesis (i). Then \cite[(2.3)]{KulshammerLectures} implies that $\ov{a}$ is a primitive idempotent in $A/I$, and it is clear that $\ov{a}^\tau=\ov{a}$.

For (ii), we may assume that $\ov{1}$ and $\ov{a}$ are linearly independent in $\ov{A}$. Set $b=aa^\tau$. Then $b^\tau=b$ and $\ov{b}=\ov{a}\,\ov{a}^\tau=\ov{a}$. We apply idempotent lifting \cite[(3.2)]{KulshammerLectures} to the $k$-algebra $k[b]$ modulo its ideal $k[b]\cap I$. So there is an idempotent $e\in k[b]$ such that $\ov{e}=\ov{b}$. Then $e$ is $\tau$-invariant as $b$ is $\tau$-invariant and $k[b]$ is commutative. Write $e=p(b)$, where $p\in k[x]$. Then $\ov{e}=p(0)\ov{1}+(p(1)\!-\!p(0))\ov{a}$. So $p(0)=0$.

Finally, assume (iii). Then by \cite[(3.10)]{KulshammerLectures} there is a primitive idempotent $c\in A$ such that $\ov{c}=\ov{a}$. Applying (ii) to $c$, there is a $\tau$-invariant idempotent $f\in cAc^\tau$ with $\ov{f}=\ov{c}=\ov{a}$. Then $f$ is primitive in $A$ as $cuc^\tau\rightarrow cuc^\tau c$ defines a $k$-algebra isomorphism of $cAc^\tau$ with the local algebra $cAc$.
\end{proof}
\end{Lemma}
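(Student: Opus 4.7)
The plan is to build up parts (i)--(iii) in order, exploiting the hypothesis that $A/J(A)$ is split semisimple (so that $A$ is semiperfect and idempotent-lifting machinery modulo any two-sided ideal is available). Part (i) should be essentially immediate: in a semiperfect algebra, a primitive idempotent maps either to zero or to a primitive idempotent in any quotient, and since $a\notin I$ the image $\bar a$ is non-zero and thus primitive; $\tau$-invariance descends to $\bar A$ because $I$ is $\tau$-invariant.

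For (ii), my key idea is to replace $a$ by the $\tau$-invariant element $b:=aa^\tau$, so that $\bar b=\bar a\bar a^\tau=\bar a^2=\bar a$. I would then work inside the commutative finite-dimensional subalgebra $k[b]\subseteq A$: standard idempotent lifting applied to $k[b]$ modulo $k[b]\cap I$ produces an idempotent $e=p(b)$ with $\bar e=\bar a$, and because $b^\tau=b$ and $k[b]$ is commutative one has $e^\tau=p(b^\tau)=p(b)=e$ for free. To force $p(0)=0$, I would compare coefficients inside the at-most-two-dimensional subalgebra $k\bar 1+k\bar a\subseteq\bar A$: idempotence of $\bar a$ yields $p(\bar a)=p(0)\bar 1+(p(1)-p(0))\bar a$, and if $\bar 1$ and $\bar a$ are linearly independent then $\bar e=\bar a$ pins down $p(0)=0$, $p(1)=1$ directly (with the degenerate case $\bar a=\bar 1$ reducing to finding an idempotent inside $b\,k[b]$ with image $\bar 1$, which follows from a decomposition of $k[b]$ into local factors). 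Having $p(0)=0$ means $p(x)=xr(x)$, so the identity $(aa^\tau)^n=a(a^\tau a)^{n-1}a^\tau$ places $e$ inside $aAa^\tau$.

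For (iii) I would first invoke standard primitive idempotent lifting in the semiperfect algebra $A$ to produce a (not necessarily $\tau$-invariant) primitive idempotent $c\in A$ with $\bar c=\bar a$. Applying (ii) to $c$ then yields a $\tau$-invariant idempotent $f\in cAc^\tau$ lifting $\bar c$, and the remaining question is whether $f$ is primitive in $A$. The trick I would try is to examine $fc$: since $f\in cAc^\tau$ gives $cf=f$ and $fc^\tau=f$, a direct computation shows $(fc)^2=f(cf)c=f^2c=fc$, so $fc$ is an idempotent in the corner algebra $cAc$. Because $c$ is primitive $cAc$ is local, so $fc\in\{0,c\}$; and $\overline{fc}=\bar a^2=\bar a\neq 0$ rules out $fc=0$, forcing $fc=c$. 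Combined with $cf=f$ this yields $fA=cA$ as right ideals, so $fA$ is indecomposable and hence $f$ is primitive.

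The step I expect to be the main obstacle is precisely this primitivity argument in (iii): one cannot invoke local-ring locality directly on $cAc^\tau$, because it is not naturally a unital subalgebra of $A$, and one has no a priori reason to expect $f$ to behave like $c$. The workaround sketched above uses $\tau$-invariance of $f$ (which delivers both $cf=f$ and $fc^\tau=f$) to push the auxiliary idempotent $fc$ into the genuine corner algebra $cAc$, where locality is available.
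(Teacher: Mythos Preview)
Your proposal is correct and matches the paper's proof essentially line for line. Parts (i) and (ii) are identical to the paper's argument: for (ii) the paper also sets $b=aa^\tau$, lifts inside the commutative algebra $k[b]$ modulo $k[b]\cap I$, and reads off $p(0)=0$ by expanding $p(\bar a)=p(0)\bar 1+(p(1)-p(0))\bar a$ after reducing to the case where $\bar 1,\bar a$ are linearly independent.

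The only point of divergence is the closing primitivity argument in (iii). The paper, having obtained the $\tau$-invariant idempotent $f\in cAc^\tau$, asserts that right multiplication by $c$ gives a $k$-algebra isomorphism $cAc^\tau\to cAc$, so that $cAc^\tau$ is local and hence $f$ is primitive. Your route instead observes $cf=f$, $fc^\tau=f$, computes that $fc$ is an idempotent in the local corner $cAc$, and uses $\overline{fc}=\bar a\neq 0$ to force $fc=c$, whence $fA=cA$ and $f$ is primitive. Both arguments exploit the locality of $cAc$; yours is a bit more self-contained in that it avoids having to justify bijectivity of the map $x\mapsto xc$ (which ultimately rests on $c$ and $c^\tau$ being conjugate, since $\bar c=\bar c^\tau$ is primitive), while the paper's formulation packages the same idea as a structural statement about the corner $cAc^\tau$.
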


We establish notation for points and multiplicity modules. Let $A^\times$ be the group of multiplicative units in $A$. A point of $A$ is an $A^\times$-conjugacy class $\epsilon$ of primitive idempotents of $A$. There is a unique maximal $2$-sided ideal ${\mathfrak M}_\epsilon$ of $A$ which does not contain any idempotent in $\epsilon$. The multiplicity module of $\epsilon$ is the unique irreducible $A$-module $P_\epsilon$ annihilated by ${\mathfrak M}_\epsilon$ and the multiplicity algebra of $\epsilon$ is the simple quotient algebra $A/{\mathfrak M}_\epsilon$. Notice that $A/{\mathfrak M}_\epsilon$ is isomorphic to the endomorphism ring $E(P_\epsilon)$ of $P_\epsilon$. We identify the projection $\pi_\epsilon:A\rightarrow A/{\mathfrak M}_\epsilon$ with a map $A\rightarrow E(P_\epsilon)$.

Now $\epsilon^\tau$ is also a point of $A$ and ${\mathfrak M}_\epsilon\cap{\mathfrak M}_{\epsilon^\tau}$ is a $\tau$-invariant ideal of\/ $A$. Suppose that $\epsilon^\tau\ne\epsilon$. Then $A/{\mathfrak M}_\epsilon\cap{\mathfrak M}_{\epsilon^\tau}\cong A/{\mathfrak M}_\epsilon\times A/{\mathfrak M}_{\epsilon^\tau}$ is a semi-simple $k$-algebra with involution $\tau$ interchanging the two simple components. We identify the projection ${\pi_{\epsilon,\epsilon^\tau}=\pi_\epsilon\times\pi_{\epsilon^\tau}}$ with a map $A\rightarrow E(P_\epsilon)\times E(P_{\epsilon^\tau})$.

\subsection{Symmetric and symplectic forms}\label{SS:symmetric_symplectic_foms}

A bilinear form on $M$ is a map $B:M\times M\rightarrow k$ which is linear in each variable. So $m\rightarrow B(m,-)$ and $m\rightarrow B(-,m)$ are $k$-linear maps from $M$ to its linear dual $M^*=\op{Hom}_k(M,k)$ and $B$ is non-degenerate if and only if both maps are $k$-isomorphisms. Then $B$ is symmetric if $B(m_1,m_2)=B(m_2,m_1)$ and symplectic if $B(m_1,m_1)=0$ for all $m_1,m_2\in M$. A subspace $M_1$ of $M$ is totally isotropic if $B(m_1,m_2)=0$ for all $m_1,m_2\in M_1$. The space of bilinear forms on $M$ can be naturally identified with ${M^*\otimes M^*}$.

Suppose that $B$ is non-degenerate. Then its adjoint is the $k$-algebra anti-automorphism $f\rightarrow f^\sigma$, for $f\in E(M)$ defined by 
$$
B(m_1,fm_2)=B(f^\sigma m_1,m_2),\quad\mbox{for all $m_1,m_2\in M$.}
$$
So $B$ is symmetric if and only if $\sigma$ is an involution on $E(M)$, in which case $(E(M),\sigma)$ is an involutary $k$-algebra and we call $(M,B)$ a symmetric or symplectic $k$-space.

An isometry is a $k$-linear map between symmetric or symplectic spaces which preserves the forms. An isometry is injective, but not necessarily surjective. Two symmetric or symplectic spaces are isomorphic if there is a surjective isometry between them. Two symmetric forms are isometric if the corresponding symmetric spaces are isomorphic. For example, $B$ is isometric to $\lambda^2 B$, for all $\lambda\in k^\times$.

If $B$ is symplectic, then it is alternating, meaning $B(m,m')=-B(m',m)$. Indeed alternating is the same as symplectic when $\op{char}(k)\ne2$. But when $\op{char}(k)=2$ alternating is the same as symmetric. So each symplectic form is symmetric, but there are symmetric forms with are not symplectic. For this reason we will not use the term alternating bilinear form.

Consider when $k$ is algebraically closed and of characteristic $2$. There is no standard name for a symmetric bilinear form which is not symplectic. If $B$ is such a form, and $B$ is non-degenerate, then $M$ has an orthonormal basis with respect to $B$. So we call $B$ a {\em diagonalizable} form. If instead $B$ is symplectic, $M$ has a symplectic basis with respect to $B$, and $\op{dim}(M)$ is even. This discussion implies that, up to isometry, each $k$-vector space has one diagonalizable form, and an additional symplectic form, if its dimension is even.

Let $\op{GL}(M,B)=\{g\in\op{GL}(M)\mid B(gm_1,gm_2)=B(m_1,m_2),\forall\, m_1,m_2\in M\}$ be the isometry group of a symmetric $k$-space $(M,B)$. Symplectic forms are more significant than diagonalizable forms. Using the methods of \cite{Grove} it is not too hard to show that if $k$ is a perfect field of characteristic $2$ and $n=\op{dim}(M)$ then
\begin{equation}\label{E:GL(M,B)}
\op{GL}(M,B)\!\cong\!
\left\{\begin{array}{lrl}
&\op{Sp}(n,k),&
\mbox{if\/ $B$ is symplectic, $n$ even,}\\
&\op{Sp}(n-1,k),\!&
\mbox{if\/ $B$ is diagonalizable, $n$ odd,}\\
&\hspace{-1.3em}{k^{n-1}:\op{Sp}(n\!-\!2,k)},&\!
\mbox{if\/ $B$ is diagonalizable, $n$ even.}\\
\end{array}\right.
\end{equation}

Now suppose that $M$ is a $kG$-module. Then $B$ is $G$-invariant if $B(gm_1,gm_2)=B(m_1,m_2)$, for all $g\in G$. If in addition $B$ is non-degenerate and symmetric or symplectic, we say that $B$ is a $G$-form on $M$. We also say that $M$ has symmetric or symplectic type. So $(M,B)$ corresponds to a representation $G\rightarrow\op{GL}(M,B)$.

Consider the case where $B$ is a symplectic $G$-form on $M$. Then there is a $\op{dim}(M)$ space of quadratic forms on $M$ which polarize to $B$. If one or more of these forms is $G$-invariant, we say that $M$ has quadratic type.

Recall that $E(M)$ is a $G$-algebra, via ${^g}f(m):=gfg^{-1}(m)$, for $m\in M,f\in E(M)$ and $g\in G$. If $B$ is a $G$-form, $\sigma$  inverts the image of each $g\in G$ in $E(M)$. So
$$
({^g}f)^\sigma=(gfg^{-1})^\sigma=gf^\sigma g^{-1}={^g}(f^\sigma),\quad\mbox{for all $f\in E(M)$.}
$$
This implies that $(E(M),\sigma)$ is an involutary $G$-algebra.

As the Krull-Schmidt theorem fails to hold, we need to be able to distinguish between isomorphic submodules of $M$. A direct summand of $M$ is a submodule $M_1$ of $M$ such that $M=M_1+M_2$ and $M_1\cap M_2=0$, for some submodule $M_2$ of $M$. We express this by writing $M=M_1\dot+M_2$. We say that a $kG$-module $L$ is a component of $M$, and write $L\mid M$, if $L$ is isomorphic to a direct summand of $M$.

Now consider when $B$ is a $G$-form on $M$. If $M_1$ is a submodule of $M$, then its orthogonal complement is the submodule $M_1^\perp:=\{m\in M\mid B(M_1,m)=0\}$. So $B$ is non-degenerate on $M_1$ if and only if $M_1\cap M_1^\perp=0$. When this occurs $M=M_1\dot+M_1^\perp$. Then we call $M_1$ a $B$-direct summand of $M$ and write $M=M_1\perp M_1^\perp$.

We say that a symmetric $kG$-module $(L,B_L)$ is a component of $(M,B)$, and write $(L,B_L)\mid(M,B)$ if there is a $kG$-isometry $(L,B_L)\rightarrow(M,B)$. Note that then $L$ is a component of $M$, as the image of an isometry is a direct summand of $M$.

The next result, which is well-known, implies that each indecomposable $kG$-module which affords a diagonalizable form belongs to the principal $2$-block of $G$.

\begin{Lemma}\label{L:diagonal=trivial}
Let $\op{char}(k)=2$ and suppose that $M$ affords a $G$-form which is not symplectic. Then $M$ has a trivial submodule and a trivial quotient module. 
\begin{proof}
We may assume that $k$ is quadratically closed. Let $B$ be a $G$-invariant symmetric bilinear form on $M$ which is not symplectic. Set ${q(m)=\sqrt{B(m,m)}}$, for $m\in M$. Then ${q:M\rightarrow k}$ is a non-zero $kG$-homomorphism. So $M$ has a trivial quotient module. Suppose that $B$ is non-degenerate. Let $\eta$ be the sum of the vectors in any orthonormal basis for $M$. Then $q(m)=B(\eta,m)$, for all $m\in M$. As a consequence $\eta$ spans a trivial submodule of $M$.
\end{proof}
\end{Lemma}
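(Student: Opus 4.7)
The plan is to use the Frobenius map, which is a ring homomorphism in characteristic $2$, to extract a $G$-equivariant linear functional from the quadratic form $m\mapsto B(m,m)$. Since the extraction needs a square root, I would first reduce to the case where $k$ is quadratically closed, so that $q(m):=\sqrt{B(m,m)}$ is defined for all $m\in M$. This is permissible because the existence of a trivial submodule and of a trivial quotient module of $M$ is unaffected by extension of the ground field.

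Next I would verify that $q:M\to k$ is a $kG$-homomorphism, where $k$ carries the trivial action. Linearity follows at once in characteristic $2$: squaring is additive, so $q(m_1+m_2)^2=B(m_1,m_1)+2B(m_1,m_2)+B(m_2,m_2)=q(m_1)^2+q(m_2)^2=(q(m_1)+q(m_2))^2$, and $q(\lambda m)=\lambda q(m)$ similarly. $G$-invariance of $q$ is immediate from $G$-invariance of $B$. Because $B$ is not symplectic, some $B(m,m)\ne 0$, so $q\ne 0$. This surjective $kG$-homomorphism witnesses a trivial quotient module of $M$.

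For the trivial submodule I would exploit the diagonalizability statement already recorded in the excerpt: since $B$ is a non-degenerate, non-symplectic symmetric form over a quadratically closed (indeed algebraically closed) field of characteristic $2$, there is an orthonormal basis $e_1,\dots,e_n$ of $M$ with respect to $B$. Setting $\eta:=e_1+\cdots+e_n$, a direct calculation gives $B(\eta,m)=\sum c_i = \sqrt{\sum c_i^2}=q(m)$ when $m=\sum c_i e_i$, using characteristic $2$ once more. Hence $B(\eta,-)$ coincides with $q$.

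Finally, I would deduce $G$-fixedness of $\eta$ from the identity $B(\eta,-)=q$. For any $g\in G$ and $m\in M$, $G$-invariance of $B$ and of $q$ yields $B(g\eta,m)=B(\eta,g^{-1}m)=q(g^{-1}m)=q(m)=B(\eta,m)$, so $B(g\eta-\eta,-)=0$. Non-degeneracy of $B$ forces $g\eta=\eta$, and therefore $k\eta$ is a trivial submodule of $M$. The main subtlety is simply remembering that the square root is linear only in characteristic $2$; once that observation is in hand, both conclusions fall out easily from the orthonormal diagonalization of $B$.
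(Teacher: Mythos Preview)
Your proof is correct and follows essentially the same route as the paper's: define $q(m)=\sqrt{B(m,m)}$ to obtain a nonzero $kG$-homomorphism $M\to k$, then take $\eta$ to be the sum of an orthonormal basis and use $B(\eta,-)=q$ together with non-degeneracy to see that $\eta$ is $G$-fixed. The only difference is that you spell out the verification that $\eta$ is fixed (via $B(g\eta-\eta,-)=0$), which the paper leaves implicit.
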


We will turn the somewhat anomalous nature of diagonalizable $G$-forms to our advantage in Section \ref{S:SymmetricVertices}.

If $\op{char}(k)=0$, each self-dual irreducible $kG$-module has symmetric or symplectic type. The type is detected by the Frobenius-Schur indicator of the associated ordinary irreducible character of $G$. This was generalised to $\op{char}(k)\ne2$ by R.~Gow. He noted that in that case each self-dual indecomposable $kG$-module has either symmetric or symplectic type. The problem of determining the type of a self-dual irreducible $kG$-module $M$ was solved independently by W. Willems \cite[3.11]{Willems} and J.~G.~Thompson \cite{Thompson}. The type is given by the Frobenius-Schur indicator of a real ordinary irreducible character of $G$ which occurs with odd multiplicity in the principal indecomposable character of $G$ corresponding to $M$.

When $\op{char}(k)\ne2$, a symmetric form $B$ can be conflated with the quadratic form $Q(m):=B(m,m)$, as $B(m_1,m_2)=\frac{1}{2}(Q(m_1+m_2)-Q(m_1)-Q(m_2))$ is half the polarization of $Q$. However when $\op{char}(k)=2$, the polarization of a quadratic form is symplectic, and there is a $1$-parameter family of quadratic forms which polarize to a given symplectic form. In particular a $G$-invariant symplectic form need not be the polarization of a $G$-invariant quadratic form. It is an open problem to determine whether a self-dual irreducible $kG$-module has quadratic type in characteristic $2$. The methods in this paper do not appear to have a bearing on this problem.

We note that when $\op{char}(k)=2$, there are self-dual indecomposable $kG$-modules which are neither of symmetric or symplectic type. The unique non-trivial projective indecomposable ${k(C_3\rtimes C_4)}$-module is an example, as follows from  \cite[1.3]{MurrayPFS}.

\section{Adjoints}\label{S:Adjoints}

Throughout this section $k$ is an algebraically closed field, $M$ is a $k$-vector space, $B$ is a non-degenerate symmetric or symplectic bilinear form on $M$ and $\sigma$ is the adjoint of $B$ on $E(M)$.

\subsection{Bilinear forms}\label{SS:BilinearForms}

For each $f\in E(M)$, define the bilinear form $B_f$ on $M$ as
\begin{equation}\label{E:Bf}
B_f(m_1,m_2):=B(fm_1,m_2),\qquad\mbox{for all $m_1,m_2\in M$.}                                                                                                                                                                                                                                                \end  {equation}
Then ${f\rightarrow B_f}$ established a non-canonical $k$-isomorphism ${E(M)\cong M^*\otimes_k M^*}$ and
\begin{itemize}
 \item[] $B_f$ is symmetric $\Longleftrightarrow$ $f=f^\sigma$,
 \item[] $B_f$ is symplectic $\Longleftrightarrow$ $f=f_1-f_1^\sigma$, for some $f_1\in E(M)$,
 \item[] $B_f$ is non-degenerate $\Longleftrightarrow$ $f\in\op{GL}(M)$.
\end{itemize}

The adjoint of a non-degenerate symmetric or symplectic bilinear form is an involution on $E(M)$. Conversely, each involution on $E(M)$ corresponds to a 1-parameter family of such forms on $M$, as we now show:

\begin{Lemma}\label{L:sigma=B}
(i) Let $\tau$ be an involution on $E(M)$. Then up to a non-zero scalar, $M$ has a unique non-degenerate symmetric or symplectic form whose adjoint is $\tau$.\\
(ii) Let $M=M_1\oplus M_2$ and let $\tau$ be an involution on ${E(M_1)\times E(M_2)}$ which interchanges $E(M_1)$ with $E(M_2)$. Then $\tau$ has a unique extension to an involution on $E(M)$. The corresponding forms on $M$ are symplectic and totally isotropic on $M_1$ and $M_2$.
\begin{proof}
The hypothesis (ii) implies that $\op{dim}(M_1)=\op{dim}(M_2)=\frac{1}{2}\op{dim}(M)$. So in case (ii) we can and do choose $B$ so that $B(m_i,m_i')=0$, for all $m_i,m_i'\in M_i$, for $i=1,2$. In particular $B$ is symplectic.

Assume the hypothesis (i). Then $\tau\sigma$ is a $k$-algebra automorphism of $E(M)$. So by the Skolem-Noether theorem there is $g\in\op{GL}(M)$ such that $f^{\tau\sigma}=gfg^{-1}$, for all $f\in E(M)$. So $f^\tau=g^{-\sigma}f^\sigma g^\sigma$. Then $B_g$ is a non-degenerate symmetric form whose adjoint is $\tau$. Moreover $g$, and thus $B_g$, is determined up to a scalar.

Assume the hypothesis (ii). Let $e_1,e_2$ be idempotents in $E(M)$ such that $1_M=e_1+e_2$, $e_i M=M_i$ and $\op{ker}(e_i)=M_{3-i}$. Then $E(M_1)\times E(M_2)$ embeds in $E(M)$ as $e_1E(M)e_1+e_2E(M)e_2$. Now $\tau\sigma$ maps each $E(M_i)$ onto itself and hence restricts to an automorphism on the semi-simple $k$-algebra $E(M_1)\times E(M_2)$. Again by the Skolem-Noether theorem there exists $g_1\in\op{GL}(M_1)$, and $g_2\in\op{GL}(M_2)$ such that $(f_1+f_2)^{\tau\sigma}=(g_1f_1g_1^{-1}+g_2f_2g_2^{-1})$, for all $f_1\in E(M_1)$ and $f_2\in E(M_2)$. Moreover each of $g_1$ and $g_2$ is determined up to a scalar. Applying $\sigma$ to both sides, we get $(f_1+f_2)^\tau=g_2^{-\sigma}f_2^{\sigma}g_2^{\sigma}+g_1^{-\sigma}f_1^{\sigma}g_1^{\sigma}$. Then
$$
f_1+f_2=((f_1+f_2)^\tau)^\tau=g_2^{-\sigma}g_1f_1g_1^{-1}g_2^{\sigma}+g_1^{-\sigma}g_2f_2g_2^{-1}g_1^{\sigma}.
$$
This holds for all $f_1\in E(M_1)$. So there is $\lambda\in k^\times$ such that $g_2^{\sigma}=\lambda g_1$. Thus $g_1^{\sigma}=\lambda^{-1} g_2$. Now replace $g_2$ by $\lambda^{-1}g_2$. Then $g_2^{\sigma}=g_1$ and
$$
(f_1+f_2)^\tau=g_1^{-1}f_2^{\sigma}g_1+g_2^{-1}f_1^{\sigma}g_2,\quad\mbox{for all $f_1\in E(M_1),f_2\in E(M_2)$.}
$$
Note that $g_1+g_2\in\op{GL}(M)$. Then $B_{g_1+g_2}$ is a symplectic form on $M$ whose adjoint extends $\tau$ to $E(M)$. Moreover no other involution on $E(M)$ extends $\tau$.
\end{proof}
\end{Lemma}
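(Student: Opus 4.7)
The plan is to exploit the standing reference form $B$ with adjoint $\sigma$ provided by the section's hypotheses. Since $\sigma$ and any involution $\tau$ are both anti-automorphisms of $E(M)$, the composition $\tau\sigma$ is an ordinary $k$-algebra automorphism, and Skolem--Noether on the central simple algebra $E(M)$ realizes it by conjugation. One can then transport the known form $B$ through this conjugation to produce a form whose adjoint is $\tau$.

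For part (i), I would apply Skolem--Noether to obtain $g\in\op{GL}(M)$, unique up to a scalar, with $f^{\tau\sigma}=gfg^{-1}$ for all $f\in E(M)$; equivalently $f^\tau=g^{-\sigma}f^\sigma g^\sigma$. Using the identification $f\mapsto B_f$ from \ref{SS:BilinearForms}, a short calculation shows that $B_g$ is non-degenerate (since $g\in\op{GL}(M)$) and has adjoint $\tau$. The hypothesis $\tau^2=\op{id}$, combined with the characterization from \ref{SS:BilinearForms} of when $B_f$ is symmetric or symplectic, forces $B_g$ to be symmetric or symplectic. Uniqueness up to a scalar is exactly the Skolem--Noether ambiguity, since the centralizer of $E(M)$ in itself is $k\cdot 1_M$.

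For part (ii), note first that $\tau$ can only interchange $E(M_1)$ and $E(M_2)$ when $\op{dim}(M_1)=\op{dim}(M_2)$, so I choose the reference form $B$ to be symplectic and totally isotropic on both $M_1$ and $M_2$; its adjoint $\sigma$ then also interchanges $E(M_1)$ and $E(M_2)$. Consequently $\tau\sigma$ restricts to an automorphism of $E(M_1)\times E(M_2)$ that preserves each simple factor, and Skolem--Noether applied to each factor yields $g_i\in\op{GL}(M_i)$ with $(f_1+f_2)^{\tau\sigma}=g_1f_1g_1^{-1}+g_2f_2g_2^{-1}$, each $g_i$ determined up to a scalar. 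I then define the candidate extension via $B_{g_1+g_2}$ and verify that its adjoint extends $\tau$.

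The main obstacle is the coupling between $g_1$ and $g_2$. Imposing $\tau^2=\op{id}$ and expanding $(f_1+f_2)^{\tau\tau}$ using the formula above gives an identity that, holding for all $f_1\in E(M_1)$, forces $g_2^\sigma=\lambda g_1$ for some $\lambda\in k^\times$; rescaling $g_2$ by $\lambda^{-1}$ yields $g_2^\sigma=g_1$. A direct computation then shows $B_{g_1+g_2}$ is non-degenerate on $M$, totally isotropic on each $M_i$ (hence symplectic), and its adjoint is an involution on $E(M)$ that restricts to $\tau$. Uniqueness of the extension follows from (i): any involution on $E(M)$ extending $\tau$ corresponds, up to a scalar, to a non-degenerate symmetric/symplectic form, and compatibility with the restriction pins this form down as a scalar multiple of $B_{g_1+g_2}$, hence gives the same adjoint involution.
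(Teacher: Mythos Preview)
Your proposal is correct and follows essentially the same route as the paper: use the reference form $B$ with adjoint $\sigma$, apply Skolem--Noether to the automorphism $\tau\sigma$ (on $E(M)$ for (i), factorwise on $E(M_1)\times E(M_2)$ for (ii)), and in (ii) extract the coupling $g_2^\sigma=\lambda g_1$ from $\tau^2=\op{id}$ before rescaling to obtain $B_{g_1+g_2}$. The only minor difference is that you deduce uniqueness in (ii) by invoking (i), whereas the paper argues it directly from the Skolem--Noether scalar ambiguity; both are fine.
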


\subsection{Endomorphisms}\label{SS:Endomorphisms}

Assume now that $M$ is a $kG$-module and $B$ is $G$-invariant. We use $E_G(M)$ to denote the ring of $kG$-endomorphisms of $M$. So $E_G(M)$ is a unital subalgebra of $E(M)$. The $G$-invariant bilinear forms on $M$ are $B_f$ for $f\in E_G(M)$, where $B_f$ is symmetric if and only if $f=f^\sigma$. The following is very useful:

\begin{Lemma}\label{L:M->Sum}
Suppose that $M$ is a non-degenerate indecomposable component of ${(L_1,B_1)\perp\dots\perp(L_n,B_n)}$, where each $(L_i,B_i)$ is a symmetric $kG$-module. Then $M$ is a non-degenerate component of some $(L_i,B_i)$.
\begin{proof}
By hypothesis there is an isometry $\alpha:(M,B_f)\rightarrow(L_1,B_1)\perp\dots\perp(L_n,B_n)$ for some $\sigma$-invariant unit $f\in E_G(M)$. Now there are $kG$-maps $\alpha_i:M\rightarrow L_i$ with
$$
\alpha(m)=\alpha_1(m)+\dots+\alpha_n(m),\quad\mbox{for all $m\in M$.}
$$
Each $(m_1,m_2)\rightarrow B_i(\alpha_im_1,\alpha_im_2)$ is a $G$-invariant symmetric bilinear form on $M$. So there are $\sigma$-invariant $\gamma_i\in E_G(M)$ such that
$$
B_i(\alpha_im_1,\alpha_im_2)=B_{\gamma_i}(m_1,m_2).
$$
Now $\gamma_1+\dots+\gamma_n=f$, as $\alpha$ is an isometry. But $E_G(M)$ is a local ring, as $M$ is indecomposable. So $\gamma_j$ is a unit, for some $j$. Then $B_{\gamma_j}$ is non-degenerate on $M$ and $\alpha_j:(M,B_{\gamma_j})\rightarrow(L_j,B_j)$ is a $kG$-isometry.
\end{proof}
\end{Lemma}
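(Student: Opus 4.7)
The plan is to decompose the given isometry componentwise and exploit the locality of $E_G(M)$, which replaces Krull--Schmidt in this setting.

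First I would fix notation: since $M$ is a non-degenerate indecomposable component of the orthogonal sum, there is a $\sigma$-invariant unit $f\in E_G(M)$ and a $kG$-isometry $\alpha:(M,B_f)\rightarrow(L_1,B_1)\perp\dots\perp(L_n,B_n)$. Writing $\alpha=\alpha_1+\dots+\alpha_n$ with each $\alpha_i:M\rightarrow L_i$ a $kG$-map, each pullback $(m_1,m_2)\mapsto B_i(\alpha_i m_1,\alpha_i m_2)$ is a $G$-invariant symmetric bilinear form on $M$. By the parametrization of $G$-invariant bilinear forms in \ref{SS:BilinearForms}, there is a unique $\gamma_i\in E_G(M)$ with this pullback equal to $B_{\gamma_i}$, and $\gamma_i$ is automatically $\sigma$-invariant.

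Next I would use that $\alpha$ is an isometry and that the $L_i$ are pairwise orthogonal: expanding $B_f(m_1,m_2)=\sum_i B_i(\alpha_i m_1,\alpha_i m_2)=B_{\sum_i\gamma_i}(m_1,m_2)$ and invoking injectivity of $g\mapsto B_g$ on $E_G(M)$ gives $f=\gamma_1+\dots+\gamma_n$. Since $M$ is indecomposable, $E_G(M)$ is a local $k$-algebra, so its non-units form a two-sided ideal; the unit $f$ cannot be written as a sum of non-units, hence some $\gamma_j$ is a unit. Then $B_{\gamma_j}$ is non-degenerate on $M$, and $\alpha_j:(M,B_{\gamma_j})\rightarrow(L_j,B_j)$ is the desired $kG$-isometry, its kernel lying in the radical of the non-degenerate form $B_{\gamma_j}$ and therefore being trivial.

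The main obstacle is conceptual rather than computational: one must avoid any appeal to Krull--Schmidt, which fails here as emphasized in \ref{SS:Krull-Schmidt}. The locality of $E_G(M)$ is exactly the right substitute, together with the dictionary $g\leftrightarrow B_g$ between $G$-invariant bilinear forms on $M$ and elements of $E_G(M)$ set up in \ref{SS:BilinearForms}.
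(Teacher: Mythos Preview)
Your proof is correct and follows the paper's argument essentially verbatim: decompose the isometry componentwise, represent each pulled-back form as $B_{\gamma_i}$, sum to recover $f$, and use locality of $E_G(M)$ to find a unit $\gamma_j$. The only difference is that you spell out a few justifications (injectivity of $g\mapsto B_g$, non-units forming an ideal, injectivity of $\alpha_j$) that the paper leaves implicit.
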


Our next result should be well-known:

\begin{Lemma}\label{L:M/radM}
Suppose that $\theta\in E(M)$. Then $(\theta M)^\perp=\op{ker}(\theta^\sigma)$. So $B$ is non-degenerate on $\theta M$ if and only if\/ $\theta^\sigma$ restricts to an isomorphism $\theta M\rightarrow\theta^\sigma M$.
\begin{proof}
The first statement holds because $m\in(\theta M)^\perp\Longleftrightarrow B(\theta^\sigma m,m')=0$, for all $m'\in M$.
Then $\theta M\cap(\theta M)^\perp=0$ if and only if\/ $\theta^\sigma$ is injective on $\theta M$. But dim$(\theta M)={\rm dim}(\theta^\sigma M)$. So $\theta^\sigma:\theta M\rightarrow\theta^\sigma M$ is injective if and only if it is surjective.
\end{proof}
\end{Lemma}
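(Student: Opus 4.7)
The plan is to handle the two assertions in sequence, reducing the second to a short dimension count. For the identity $(\theta M)^\perp=\op{ker}(\theta^\sigma)$, I would simply unwind the definitions: a vector $m$ lies in $(\theta M)^\perp$ precisely when $B(\theta m',m)=0$ for every $m'\in M$. The defining relation of the adjoint, $B(\theta m',m)=B(m',\theta^\sigma m)$, rewrites this as $B(m',\theta^\sigma m)=0$ for all $m'\in M$, and non-degeneracy of $B$ then forces $\theta^\sigma m=0$. So the two subspaces coincide.

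For the second assertion, the standard criterion is that $B$ is non-degenerate on $\theta M$ if and only if $\theta M\cap(\theta M)^\perp=0$; substituting the first part converts this to $\theta M\cap\op{ker}(\theta^\sigma)=0$, i.e.\ the restriction $\theta^\sigma|_{\theta M}\colon \theta M\to\theta^\sigma M$ is injective. To upgrade injectivity to an isomorphism I would invoke the rank equality $\dim_k(\theta M)=\dim_k(\theta^\sigma M)$; once the two dimensions agree, a $k$-linear injection between them is automatically surjective.

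The only step that requires a remark is that rank equality, which I would justify as follows: $B$ being non-degenerate gives a $k$-isomorphism $M\cong M^*$ under which $\theta^\sigma$ corresponds to the transpose of $\theta$, and transpose preserves rank. (Equivalently, $\sigma$ is a $k$-algebra anti-automorphism of $E(M)$, and rank is invariant under any such anti-automorphism since it can be characterised purely ring-theoretically.) Neither direction is deep, and I would expect this to be the only place a reader might want to pause; everything else is just bookkeeping with adjointness and non-degeneracy.
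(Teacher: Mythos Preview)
Your proof is correct and follows essentially the same route as the paper's: both establish $(\theta M)^\perp=\ker(\theta^\sigma)$ by unwinding the adjoint relation and non-degeneracy, then reduce the second assertion to the rank equality $\dim(\theta M)=\dim(\theta^\sigma M)$. The only difference is that you spell out why that rank equality holds (via transpose or the anti-automorphism), whereas the paper simply asserts it.
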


Next recall that a pairing between two $k$-vector spaces $L$ and $M$ is a bilinear map $L\times M\rightarrow k$. The pairing is perfect if $\ell\rightarrow B(\ell,-)$ for $\ell\in L$, induces a $k$-isomorphism $L\rightarrow M^*$. The canonical example is the perfect pairing $P:M^*\times M\rightarrow k$ given by $P(f,m)=f(m)$, for $f\in M^*$ and $m\in M$. Then $P$ extends to a non-degenerate symplectic bilinear form on $M^*\oplus M$, also denoted by $P$, which is zero on restriction to each of $M^*$ and $M$. We call $(M^*\oplus M,P)$ the paired module on $M$.

Recall that $M^*$ is a $kG$-module via $(gf)(m):=f(g^{-1}m)$, for $g\in G,f\in M^*$ and $m\in M$. A perfect $G$-pairing between two $kG$-modules $L$ and $M$ is a perfect pairing $P$ for which $P(g\ell,gm)=P(\ell,m)$, for all $\ell\in L, m\in M$ and $g\in G$. Equivalently $\ell\rightarrow B(\ell,-)$ for $\ell\in L$, is a $kG$-module isomorphism $L\cong M^*$. For example $P:M^*\times M\rightarrow k$ is a perfect $G$-pairing.

We now associate to each $G$-endomorphism of $M$ a perfect $G$-pairing on a pair of submodules of $M$. This construction is essential to our characterisation of projectivity of forms in Lemma \ref{L:Higman}.

\begin{Lemma}\label{L:PerfectGPairing}
Given $\theta\in E_G(M)$ define $\hat B_\theta:\theta M\times\theta^\sigma M\rightarrow k$ via
$$
\hat B_\theta(\theta m_1,\theta^\sigma m_2)=B(\theta m_1,m_2),\quad\mbox{for all $m_1,m_2\in M$.}
$$
Then $\hat B_\theta$ is a perfect $G$-pairing, and thus ${\theta^\sigma M\cong(\theta M)^*}$ as $kG$-modules.

Conversely if\/ $P$ is a perfect $G$-pairing between submodules $L_1$ and $L_2$ of\/ $M$ then there is a unique $\psi\in E_G(M)$ such that $L_1=\psi M, L_2=\psi^\sigma M$ and $P=\hat B_\psi$.
\begin{proof}
We note that $\hat B_\theta$ is well-defined as $B(\theta m_1,m_2)=B(m_1,\theta^\sigma m_2)$. Lemma \ref{L:M/radM} implies that
$\hat B_\theta$ is a perfect $G$-pairing between $\theta M$ and $\theta^\sigma M$.

Let $P:L_1\times L_2\rightarrow k$ be a perfect $G$-pairing. Then for each $m\in M$ there is $\psi m\in L_1$ such that $B(m,\ell_2)=P(\psi m,\ell_2)$ for all $\ell_2\in L_2$. Check that $\psi\in E_G(M)$ and $\psi M=L_1$. Likewise there is $\psi'\in E_G(M)$ such that $B(\ell_1,m)=P(\ell_1,\psi'm)$ for all $\ell_1\in L_1$. Now for all $m_1,m_2\in M$
$$
B(m_1,\psi'm_2)=P(\psi m_1,\psi'm_2)=B(\psi m_1,m_2).
$$
It follows that $\psi'=\psi^\sigma$ and $P=\hat B_\psi$.
\end{proof}
\end{Lemma}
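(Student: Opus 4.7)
The statement has two halves: the forward construction of $\hat B_\theta$, and the converse producing $\psi$ from a perfect $G$-pairing; I would handle them in turn.

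For the forward direction, I would first verify that $\hat B_\theta$ is well-defined. The right-hand side $B(\theta m_1,m_2)$ is patently independent of the choice of preimage of $\theta m_1$. Independence from the choice of preimage of $\theta^\sigma m_2$ comes from rewriting $B(\theta m_1,m_2)=B(m_1,\theta^\sigma m_2)$ using the adjoint identity, so the value depends only on $\theta^\sigma m_2$. Bilinearity is inherited from $B$, and $G$-invariance follows from the $G$-invariance of $B$ together with the fact that $\theta$ (and hence $\theta^\sigma$) commutes with the $G$-action. Perfectness reduces entirely to non-degeneracy of $B$ on $M$: if $\hat B_\theta(\theta m_1,\theta^\sigma m_2)=0$ for every $m_2\in M$, then $B(\theta m_1,m_2)=0$ for every $m_2\in M$, forcing $\theta m_1=0$; the symmetric argument, after swapping sides via the adjoint identity, handles the other variable.

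For the converse, given a perfect $G$-pairing $P$ on $L_1\times L_2$, I would construct $\psi$ directly. For each $m\in M$ the assignment $\ell_2\mapsto B(m,\ell_2)$ is a linear form on $L_2$; since $P$ is perfect, there is a unique element $\psi m\in L_1$ with $P(\psi m,\ell_2)=B(m,\ell_2)$ for all $\ell_2\in L_2$. Linearity of $\psi$ is immediate, and $G$-equivariance follows by applying $G$-invariance of both $B$ and $P$ and then invoking uniqueness of $\psi m$. To confirm $\psi M=L_1$, pick $\ell_1\in L_1$; the perfectness of $P$ in the other slot produces the functional $\ell_2\mapsto P(\ell_1,\ell_2)$ on $L_2$, and non-degeneracy of $B$ on $M$ then realises this functional as $B(m,-)|_{L_2}$ for some $m\in M$, whence $\psi m=\ell_1$ by uniqueness.

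I would define $\psi'$ symmetrically by $P(\ell_1,\psi'm)=B(\ell_1,m)$ for all $\ell_1\in L_1$ and show $\psi'=\psi^\sigma$ from the chain
$$
B(\psi m_1,m_2)=P(\psi m_1,\psi'm_2)=B(m_1,\psi'm_2),
$$
which combined with the defining property of $\sigma$ forces the identification, and at the same time shows $\psi^\sigma M=L_2$ by the analogue of the surjectivity step applied to $\psi'$. The identity $P=\hat B_\psi$ is then just a matter of comparing definitions, and uniqueness of $\psi$ is forced because any $\psi_1$ satisfying the defining relation has $P(\psi_1 m-\psi m,\ell_2)=0$ for all $\ell_2\in L_2$, hence $\psi_1=\psi$ by perfectness. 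The main obstacle I expect is bookkeeping between $\psi$, $\psi'$, and $\psi^\sigma$, in particular ensuring $\psi^\sigma M$ is exactly $L_2$ rather than merely contained in it; the symmetric construction of $\psi'$ together with the surjectivity argument is what makes this go through.
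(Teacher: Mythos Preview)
Your proof is correct and follows essentially the same approach as the paper. The only notable difference is in the forward direction: where the paper invokes Lemma~\ref{L:M/radM} to conclude that $\hat B_\theta$ is perfect, you argue directly from non-degeneracy of $B$ (vanishing of $B(\theta m_1,m_2)$ for all $m_2$ forces $\theta m_1=0$, and symmetrically), which is arguably more transparent; your converse construction of $\psi$ and $\psi'$ and the identification $\psi'=\psi^\sigma$ via the chain $B(\psi m_1,m_2)=P(\psi m_1,\psi' m_2)=B(m_1,\psi' m_2)$ match the paper's proof exactly, with the additional details on surjectivity and uniqueness that the paper leaves implicit.
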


Note that $\theta^\sigma=\theta$ if and only if\/ $\theta M=\theta^\sigma M$ and $\hat B_\theta$ is symmetric.

\subsection{Idempotents}\label{SS:Idempotents}

We continue with our assumption that $M$ is a $kG$-module and $B$ is $G$-invariant. Now if $M=M_1\dot+\dots\dot+M_t$ as $kG$-modules, then the projections onto the $M_i$ are pairwise orthogonal idempotents in $E_G(M)$ which sum to $1_M$. In the presence of the $G$-form $B$, we need to consider self-adjoint idempotents:

\begin{Lemma}[Orthogonal Projection]\label{L:projection}
Let $L$ be a $kG$-submodule of $M$. Then $B$ is non-degenerate on $L$ if and only if\/ $L=eM$, for some $\sigma$-invariant idempotent $e\in E(M)$. If\/ $e$ exists it is unique, $G$-invariant and $\op{ker}(e)=L^\perp$.
\begin{proof}
Suppose that $e$ exists. Then $e^\sigma M=e^\sigma eM=eM$. So $B$ is non-degenerate on $L$, by Lemma \ref{L:M/radM}. Moreover ${\rm ker}(e)=(e^\sigma M)^\perp=L^\perp$. This ensures that $e$ is unique, and this forces $e\in E_G(M)$.

Conversely, suppose that $L\cap L^\perp=0$. Let $e$ be projection onto $L$ with kernel $L^\perp$. Then ker${(e^\sigma)=(eM)^\perp=L^\perp}$ and ${e^\sigma M={\rm ker}(e)^\perp=L}$. So $e^\sigma$ is projection onto $L$ with kernel $L^\perp$. We deduce that $e^\sigma=e$.
\end{proof}
\end{Lemma}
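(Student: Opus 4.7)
The plan is to reduce the claim to Lemma \ref{L:M/radM}, which already identifies $(\theta M)^\perp$ with $\ker(\theta^\sigma)$ for any $\theta\in E(M)$. Once this is in hand, the assertion essentially says that a linear projection onto $L$ along $L^\perp$ coincides with its own $\sigma$-adjoint, which is a direct computation.

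First I would handle the forward direction. Suppose $e\in E(M)$ is a $\sigma$-invariant idempotent with $eM=L$. Applying Lemma \ref{L:M/radM} with $\theta=e$ gives $L^\perp=(eM)^\perp=\ker(e^\sigma)=\ker(e)$. Since $M=eM\oplus\ker(e)=L\oplus L^\perp$, we conclude $L\cap L^\perp=0$, so $B$ restricts non-degenerately to $L$. Uniqueness is immediate because $e$ is then the unique linear projection onto $L$ with kernel $L^\perp$. For $G$-invariance, I would use that $B$ is $G$-invariant, which implies $(geg^{-1})^\sigma=g\,e^\sigma g^{-1}=geg^{-1}$; moreover $geg^{-1}$ is an idempotent with image $gL=L$, so by uniqueness $geg^{-1}=e$ and $e\in E_G(M)$.

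For the converse, assume $L\cap L^\perp=0$. Then a dimension count (using that $B$ is non-degenerate on $M$ and hence $\dim L+\dim L^\perp=\dim M$) yields $M=L\oplus L^\perp$. Let $e$ denote the (purely linear) projection onto $L$ with kernel $L^\perp$. I would then show $e^\sigma=e$ by computing image and kernel of $e^\sigma$ using Lemma \ref{L:M/radM}: $\ker(e^\sigma)=(eM)^\perp=L^\perp$ and $e^\sigma M=(\ker e)^{\perp\perp}=L$ (using non-degeneracy of $B$ on $M$ to ensure $L^{\perp\perp}=L$). Thus $e^\sigma$ is also the projection onto $L$ along $L^\perp$, and hence equals $e$.

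The main conceptual step is the observation $(eM)^\perp=\ker(e^\sigma)$, but that has already been done in Lemma \ref{L:M/radM}, so no serious obstacle remains; the only mild subtlety is remembering to invoke $G$-invariance of $B$ to upgrade $e$ from an element of $E(M)$ to an element of $E_G(M)$, which I would do via the uniqueness clause rather than a direct calculation.
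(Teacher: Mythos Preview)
Your proposal is correct and follows essentially the same route as the paper: both directions reduce to Lemma~\ref{L:M/radM}, and in the converse you show $e^\sigma$ has the same image and kernel as $e$. One small slip: you write $e^\sigma M=(\ker e)^{\perp\perp}$, but what Lemma~\ref{L:M/radM} (applied to $e^\sigma$) gives is $(e^\sigma M)^\perp=\ker e$, hence $e^\sigma M=(\ker e)^\perp=(L^\perp)^\perp=L$; your parenthetical about $L^{\perp\perp}=L$ makes clear you had this in mind, so this is a typo rather than a gap.
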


Each $G$-invariant form on a direct summand of $M$ extends to $M$, as we show:

\begin{Lemma}\label{L:restrictions}
Let $e\in E_G(M)$ be idempotent. Then each $G$-invariant form on $eM$ is the restriction of a $G$-invariant form\/ $B_\theta$ on $M$, for some $\theta\in e^\sigma E_G(M)e$.
\begin{proof}
Using Lemma \ref{L:PerfectGPairing}, $e^\sigma E(M)e={\rm Hom}(eM,e^\sigma M)\cong(eM)^*\otimes(eM)^*$. Also  $B_\phi$ and $B_{e^\sigma\phi e}$ have the same restrictions to $eM$, for all $\phi\in E_G(M)$.

Let $\hat B$ be a $G$-invariant bilinear form on $eM$. Then $\hat B(e\_,e\_)$ defines a $G$-invariant bilinear form on $M$. So there exists ${\theta\in e^\sigma E_G(M)e}$ with $\hat B(em_1,em_2)=B_\theta(m_1,m_2)$, for all $m_1,m_2\in M$.
\end{proof}
\end{Lemma}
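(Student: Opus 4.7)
The plan is to extend $\hat B$ from $eM$ to a $G$-invariant bilinear form on all of $M$, then write that extension as $B_\phi$ for some $\phi\in E_G(M)$, and finally replace $\phi$ by an element of $e^\sigma E_G(M)e$ without disturbing the restriction to $eM$.

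First I would set $\tilde B(m_1,m_2):=\hat B(em_1,em_2)$ for all $m_1,m_2\in M$. Since $e\in E_G(M)$ and $\hat B$ is $G$-invariant, $\tilde B$ is a $G$-invariant bilinear form on $M$, and because $e$ acts as the identity on $eM$ it restricts to $\hat B$ there. By the standard identification of $G$-invariant bilinear forms on $M$ with elements of $E_G(M)$ via $f\mapsto B_f$ (recalled at the beginning of \ref{SS:Endomorphisms}), there exists $\phi\in E_G(M)$ with $\tilde B=B_\phi$.

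Now set $\theta:=e^\sigma\phi e$, which manifestly lies in $e^\sigma E_G(M)e$. A short computation using $e^2=e$ together with the adjoint identity $B(fm_1,m_2)=B(m_1,f^\sigma m_2)$ yields $B_\theta(em_1,em_2)=B(e^\sigma\phi e\cdot em_1,em_2)=B(\phi em_1,em_2)=B_\phi(em_1,em_2)$, so $B_\theta$ and $B_\phi$ have the same restriction to $eM$, namely $\hat B$. This gives the required $\theta$.

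I do not anticipate any real obstacle. The only content is the standard correspondence between $G$-invariant bilinear forms on $M$ and elements of $E_G(M)$; the remainder is the bookkeeping observation, essentially already used in Lemma \ref{L:PerfectGPairing}, that replacing $\phi$ by $e^\sigma\phi e$ does not alter the restriction of $B_\phi$ to $eM$. If anything deserves care, it is ensuring the $\sigma$ ends up on the correct side so that $\theta$ lies in $e^\sigma E_G(M)e$ rather than, say, $eE_G(M)e^\sigma$; this is fixed by the choice of defining $\tilde B$ with $e$ on both entries rather than only one.
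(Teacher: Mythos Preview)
Your proof is correct and follows essentially the same route as the paper: extend $\hat B$ to $M$ by precomposing with $e$ on both arguments, write the extension as $B_\phi$ for some $\phi\in E_G(M)$, and then pass to $\theta=e^\sigma\phi e$, which does not change the restriction to $eM$. The paper records exactly these two ingredients (the observation that $B_\phi$ and $B_{e^\sigma\phi e}$ agree on $eM$, and the extension $\hat B(e\_,e\_)$), just more tersely.
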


Our next result is required in Proposition \ref{P:case3}. The proof uses ideas from \cite{GowWillemsPIMs}:

\begin{Lemma}\label{L:efinS}
Let $A$ be a semi-simple subalgebra of\/ $E(M)$ and let $f\in A\cap A^\sigma$ be an idempotent such that $fM$ is a submodule of $M$ and $B$ is non-degenerate on $fM$. Let $e$ be orthogonal projection onto $fM$. Then $e\in E_G(M)\cap A$.
\begin{proof}
Lemma \ref{L:M/radM} implies that $f^\sigma fM=f^\sigma M$. So by the Artin-Wedderburn Theorem and the Jacobson Density Lemma  $f^\sigma=f^\sigma fa$ for some $a\in A$. Set $e=faf^\sigma$. Then $e\in A$, by hypothesis on $f$ and 
$$
e^\sigma e=(fa^\sigma f^\sigma)(faf^\sigma)=fa^\sigma(f^\sigma fa)f^\sigma=fa^\sigma(f^\sigma)^2=e^\sigma.
$$
So $e=(e^\sigma e)^\sigma=(e^\sigma e)=e^\sigma$. Then $e^2=e^\sigma e=e$. Moreover $e\in A$.

Now $eM=fM$, as $e=fe$ and $f=ff=f(a^\sigma f^\sigma f)=(fa^\sigma f^\sigma)f=ef$. As $e$ is orthogonal projection onto $fM$, Lemma \ref{L:projection} implies that $e\in E_G(M)$.
\end{proof}
\end{Lemma}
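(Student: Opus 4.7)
The plan is to invoke Lemma \ref{L:projection}, which gives a unique $\sigma$-invariant idempotent $e\in E(M)$ with $eM=fM$ and automatically places it in $E_G(M)$. Hence it suffices to produce any $\sigma$-invariant idempotent inside $A$ whose image on $M$ is $fM$; the uniqueness clause then identifies it with $e$, yielding $e\in A\cap E_G(M)$.

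Non-degeneracy of $B$ on $fM$ enters through Lemma \ref{L:M/radM}: $f^\sigma$ restricts to an isomorphism $fM\to f^\sigma M$, so $f^\sigma fM=f^\sigma M$. The next step is to upgrade this geometric equality into the algebraic statement $f^\sigma\in f^\sigma fA$. Here $A$'s semi-simplicity is decisive: by Artin--Wedderburn (writing $A$ as a product of matrix algebras) and Jacobson density, the right ideals $f^\sigma fA$ and $f^\sigma A$ of $A$ must coincide, producing $a\in A$ with $f^\sigma fa=f^\sigma$.

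With such $a$ in hand I would set $e:=faf^\sigma\in A$ and verify it is a $\sigma$-invariant idempotent with $eM=fM$. A short calculation using $f^\sigma fa=f^\sigma$ and $(f^\sigma)^2=f^\sigma$ gives $e^\sigma e=(fa^\sigma f^\sigma)(faf^\sigma)=fa^\sigma(f^\sigma fa)f^\sigma=fa^\sigma(f^\sigma)^2=e^\sigma$. Taking $\sigma$ of both sides and using the anti-automorphism rule $(e^\sigma e)^\sigma=e^\sigma e$ forces $e^\sigma=e$, and hence $e^2=e^\sigma e=e$. The inclusion $eM\subseteq fM$ is immediate from $e=f\cdot af^\sigma$; for the reverse, applying $\sigma$ to $f^\sigma fa=f^\sigma$ yields $f=a^\sigma f^\sigma f$, whence $ef=e^\sigma f=(fa^\sigma f^\sigma)f=f(a^\sigma f^\sigma f)=f$, giving $fM\subseteq eM$.

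The main obstacle is the semi-simplicity step producing $a$: in a general finite-dimensional algebra the containment $f^\sigma\in f^\sigma fA$ can fail even when $f^\sigma fM=f^\sigma M$, so one really needs the Wedderburn structure of $A$ to push the image-level equality down to an equality of right ideals.
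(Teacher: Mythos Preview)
Your proposal is correct and follows essentially the same argument as the paper: both use Lemma~\ref{L:M/radM} to get $f^\sigma fM=f^\sigma M$, invoke semi-simplicity of $A$ (via Artin--Wedderburn/Jacobson density) to produce $a\in A$ with $f^\sigma fa=f^\sigma$, set $e=faf^\sigma$, verify $e^\sigma e=e^\sigma$ and hence $e=e^\sigma=e^2$, check $eM=fM$ via $ef=f$, and then appeal to the uniqueness in Lemma~\ref{L:projection} to conclude $e\in E_G(M)$. Your identification of the semi-simplicity step as the crux is on target, and your justification that $f^\sigma fA=f^\sigma A$ as right ideals is exactly what the paper is (tersely) using.
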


Finally we prove \cite[Proposition]{GowWillemsGreen} using methods which will be developed later:

\begin{Lemma}\label{L:GowWillems}
Suppose that $M=M_1\dot+\dots\dot+M_t$ where each $M_i$ is an indecomposable $kG$-module. Then for each $i$
\begin{itemize}
\item[(i)] $B$ is non-degenerate on $M_i$ or
\item[(ii)] $B$ is non-degenerate on $M_i\dot+M_j$ for some $j\ne i$ with $M_j\cong M_i^*$.
\end{itemize}
\begin{proof}
Write $1_M=e_1+\dots e_t$ where $e_1,\dots,e_t$ are pairwise orthogonal primitive idempotents in $E_G(M)$ with $e_jM=M_j$. Let $\epsilon$ be the point of\/ $E_G(M)$ containing $e_i$. We use $\ov{\hphantom{d}\vphantom{s}}$ for images in $E(P_\epsilon)$.

Suppose first that $M_i\cong M_i^*$. Then ${\epsilon^\sigma=\epsilon}$, using Lemma \ref{L:PerfectGPairing}. So $(E(P_\epsilon),\sigma)$ is an involutary $k$-algebra. Lemma \ref{L:sigma=B} implies that $P_\epsilon$ affords a symmetric form $B_\epsilon$ with adjoint $\sigma$. For all $j$ with $e_j\in\epsilon$, choose  $s_j\in\ov{e_j}P_\epsilon$ with $s_j\ne0$. Then the $s_j$ form a basis of\/ $P_\epsilon$.

Say $B_\epsilon(s_i,s_i)\ne0$. Then $B_\epsilon$ is non-degenerate on $ks_i$. Let $x_i\in \ov{e_i}E(P_\epsilon)$ be orthogonal projection onto $ks_i$, as given by Lemma \ref{L:projection}. By Lemma \ref{L:idempotent_lifting}, there is a $\sigma$-invariant idempotent $f_i\in e_iE_G(M)e_i^\sigma$ such that $\ov{f_i}=x_i$. Then $f_iM\subseteq M_i$ and $M_i$ is indecomposable. So $f_iM=M_i$. It then follows from Lemma \ref{L:projection} that $B$ is non-degenerate on $M_i$.

Now say $B_\epsilon(s_i,s_i)=0$. As $B_\epsilon$ is non-degenerate, we may choose $j\ne i$ such that $B_\epsilon(s_i,s_j)\ne0$. So $B_\epsilon$ is non-degenerate on $ks_i+ks_j$. Let $x_{ij}\in\ov{e_i+e_j}E(P_\epsilon)$ be orthogonal projection onto $ks_i+ks_j$, as given by Lemma \ref{L:projection}. By Lemma \ref{L:idempotent_lifting}, there is a $\sigma$-invariant idempotent $f_{ij}\in(e_i+e_j)E_G(M)$ such that $\ov{f_{ij}}=x_{ij}$. Then ${f_{ij}M=M_i+M_j}$, as  $f_{ij}M\subseteq M_i+M_j$ but $f_{ij}$ is not primitive. So $B$ is non-degenerate on $M_i+M_j$, by Lemma \ref{L:projection}.

Finally, suppose that $M_i\not\cong M_i^*$. Then $\epsilon^\sigma\ne\epsilon$ and $(E(P_\epsilon)\times E(P_{\epsilon^\sigma}),\sigma)$ is an involutary $k$-algebra satisfying the hypothesis of Lemma \ref{L:sigma=B}(ii). Let $B_{\epsilon,\epsilon^\sigma}$ be the corresponding symplectic form on $P_\epsilon\oplus P_{\epsilon^\sigma}$. We proceed as above; there exists $j$ such that $e_j\in\epsilon^\sigma$ and $B_{\epsilon,\epsilon^\sigma}$ is non-degenerate on $\pi_{\epsilon,\epsilon^\sigma}(e_i+e_j)(P_\epsilon\oplus P_{\epsilon^\sigma})$. Then there is a $\sigma$-invariant idempotent $f_{ij}\in(e_i+e_j)E_G(M)$ such that $f_{ij}M=M_i+M_j$. So $B$ is non-degenerate on $M_i+M_j$, and $M_j\cong M_i^*$.
\end{proof}
\end{Lemma}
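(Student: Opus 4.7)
The plan is to reduce the question to a two-dimensional computation on the multiplicity module. Decompose $1_M=e_1+\cdots+e_t$ into pairwise orthogonal primitive idempotents of $E_G(M)$ with $e_jM=M_j$, fix $i$, and let $\epsilon$ be the point of $E_G(M)$ containing $e_i$ with multiplicity module $P_\epsilon$. By Lemma \ref{L:PerfectGPairing} the condition $\epsilon^\sigma=\epsilon$ is equivalent to $M_i\cong M_i^*$, so I separate into two cases accordingly.

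In the self-dual case $\epsilon^\sigma=\epsilon$, the adjoint $\sigma$ descends to an involution on $E(P_\epsilon)$, so Lemma \ref{L:sigma=B}(i) furnishes a non-degenerate symmetric form $B_\epsilon$ on $P_\epsilon$ with this adjoint. Picking non-zero $s_j\in\overline{e_j}P_\epsilon$ for each $e_j\in\epsilon$ yields a basis of $P_\epsilon$. If $B_\epsilon(s_i,s_i)\ne 0$ then orthogonal projection onto $ks_i$ is a $\sigma$-invariant idempotent in $\overline{e_i}E(P_\epsilon)$, which I lift via Lemma \ref{L:idempotent_lifting}(ii) to a $\sigma$-invariant idempotent $f_i$ in $E_G(M)$ with image $M_i$ (using indecomposability of $M_i$), and Lemma \ref{L:projection} delivers conclusion (i). If instead $B_\epsilon(s_i,s_i)=0$, non-degeneracy of $B_\epsilon$ forces some $j\ne i$ with $B_\epsilon(s_i,s_j)\ne 0$, so $B_\epsilon$ is non-degenerate on $ks_i+ks_j$; lifting the corresponding orthogonal projection produces a $\sigma$-invariant idempotent $f_{ij}\in(e_i+e_j)E_G(M)$ with $f_{ij}M=M_i\dot+M_j$ and $M_j\cong M_i\cong M_i^*$, giving conclusion (ii).

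The case $\epsilon^\sigma\ne\epsilon$ proceeds analogously using Lemma \ref{L:sigma=B}(ii): the involution on $E(P_\epsilon)\times E(P_{\epsilon^\sigma})$ corresponds to a non-degenerate symplectic form $B_{\epsilon,\epsilon^\sigma}$ on $P_\epsilon\oplus P_{\epsilon^\sigma}$ that vanishes on each summand and therefore pairs them perfectly. Non-degeneracy forces some $e_j\in\epsilon^\sigma$ for which the pairing of $\overline{e_i}P_\epsilon$ with $\overline{e_j}P_{\epsilon^\sigma}$ is non-zero, and lifting the orthogonal projection onto the resulting two-dimensional non-degenerate subspace produces $f_{ij}\in(e_i+e_j)E_G(M)$ with $f_{ij}M=M_i\dot+M_j$ and $M_j\cong M_i^*$.

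The main technical obstacle I foresee is verifying that the lifted idempotent actually satisfies $f_{ij}M=M_i\dot+M_j$ and not merely $f_{ij}M\subseteq M_i\dot+M_j$. This follows because $f_{ij}\in(e_i+e_j)E_G(M)$ forces the containment, while the image of $f_{ij}$ is zero in every multiplicity algebra other than $E(P_\epsilon)$ (respectively $E(P_\epsilon)\times E(P_{\epsilon^\sigma})$) and has rank two there, so the primitive decomposition of $f_{ij}$ in $E_G(M)$ comprises exactly two primitive idempotents lying in $\epsilon\cup\epsilon^\sigma$; a dimension count then gives equality and Lemma \ref{L:projection} supplies the non-degeneracy of $B$ on $M_i\dot+M_j$.
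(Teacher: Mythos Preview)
Your proposal is correct and follows essentially the same route as the paper: the same idempotent decomposition, the same case split on $\epsilon^\sigma=\epsilon$ versus $\epsilon^\sigma\ne\epsilon$, the same use of Lemmas \ref{L:sigma=B}, \ref{L:projection}, and \ref{L:idempotent_lifting} to descend to the multiplicity module, locate a non-degenerate one- or two-dimensional subspace, and lift the orthogonal projection. Your final paragraph on why $f_{ij}M=M_i\dot+M_j$ is a correct and welcome elaboration of a step the paper dispatches with the terse remark ``$f_{ij}$ is not primitive.''
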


\subsection{Projective Representations}\label{SS:Representations}

For the rest of Section \ref{S:Adjoints}, $k$ has characteristic $2$ and $M$ is a $k$-vector space. The map $f\rightarrow gfg^{-1}$, for $g\in\op{GL}(M)$, is a $k$-algebra automorphism of $E(M)$. Using this, we can identify $\op{PGL}(M)=\op{GL}(M)/k^\times1_M$ with $\op{Aut}(E(M))$. As $\sigma$ is a $k$-algebra anti-automorphism of $E(M)$, it acts on $\op{PGL}(M)$. We set $\op{PGL}(M,\sigma)$ as the centralizer of $\sigma$ in $\op{PGL}(M)$.

\begin{Lemma}\label{L:PGL(M,sigma)}
Each projective representation $\theta:G\rightarrow\op{PGL}(M,\sigma)$ has a unique lift to a group representation $G\rightarrow\op{GL}(M,B)$.
\begin{proof}
Let $\rho:\op{GL}(M)\rightarrow\op{PGL}(M)$ be the projection, with kernel $k^\times1_M$. If $g\in\op{GL}(M,B)$, then $\rho(g)\in\op{PGL}(M,\sigma)$. Suppose that $\lambda1_M\in\op{GL}(M,B)$, with $\lambda\in k^\times$. Then $(\lambda1_M)^\sigma=\lambda^{-1}1_M$. But $(\lambda1_M)^\sigma=\lambda1_M$, as $\sigma$ is $k$-linear. So $\lambda=\lambda^{-1}$. As $\op{char}(k)=2$, it follows that $\lambda=1_k$. This shows that $\rho$ restricts to an injective map $\hat{\rho}:\op{GL}(M,B)\rightarrow\op{PGL}(M,\sigma)$. We claim that $\hat\rho$ is surjective. To see this, let $a\in\op{PGL}(M,\sigma)$. Then $a=\rho(g)$, for some $g\in\op{GL}(M)$. Now
$$
g^{-\sigma}fg^\sigma=({^a\!}f^\sigma)^\sigma={^a\!}f=gfg^{-1},\quad\mbox{for all $f\in E(M)$.}
$$
So $g^\sigma=\lambda g^{-1}$ for some $\lambda\in k^\times$, by the Skolem-Noether theorem. Then $\sqrt{\lambda^{-1}}g\in\op{GL}(M,B)$ and $\hat\rho(\sqrt{\lambda^{-1}}g)=a$. Our claim follows from this.

For uniqueness, suppose that $X$ and $Y$ are representations $G\rightarrow\op{GL}(M,B)$ lifting $\theta$. Then there is a function ${\gamma:G\rightarrow k^\times}$ such that $Y(g)=\gamma(g)X(g)$, for all $g\in G$. Applying $\sigma$ to both sides we get $Y(g^{-1})=\gamma(g)X(g^{-1})$. But $Y(g^{-1})=Y(g)^{-1}=\gamma(g)^{-1}X(g^{-1})$. Comparing, we see that $\gamma(g)^{-1}=\gamma(g)$. So $\gamma(g)=1_k$ and $X(g)=Y(g)$, for all $g\in G$.
\end{proof}
\end{Lemma}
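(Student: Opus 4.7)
The plan is to prove that the canonical projection $\rho:\op{GL}(M)\to\op{PGL}(M)$ restricts to a \emph{group isomorphism} $\hat\rho:\op{GL}(M,B)\to\op{PGL}(M,\sigma)$. Once this is in hand, the unique lift of the projective representation $\theta$ is simply $\hat\rho^{-1}\circ\theta$, and the uniqueness of any other lift inside $\op{GL}(M,B)$ will follow by an analogous scalar-cancellation argument.

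First I would check that $\hat\rho$ lands in $\op{PGL}(M,\sigma)$: for $g\in\op{GL}(M,B)$ one has $g^\sigma=g^{-1}$, so
\[
(gfg^{-1})^\sigma=g^{-\sigma}f^\sigma g^\sigma=gf^\sigma g^{-1},
\]
showing that $\rho(g)$ commutes with $\sigma$ in $\op{Aut}(E(M))=\op{PGL}(M)$. Injectivity of $\hat\rho$ is where characteristic $2$ enters: any scalar $\lambda 1_M\in\op{GL}(M,B)$ must satisfy both $(\lambda 1_M)^\sigma=\lambda 1_M$ (since $\sigma$ is $k$-linear and fixes $1_M$) and $(\lambda 1_M)^\sigma=\lambda^{-1}1_M$ (the isometry condition), forcing $\lambda^2=1$ and hence $\lambda=1$. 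For surjectivity, given $a\in\op{PGL}(M,\sigma)$ I would pick any preimage $g\in\op{GL}(M)$ under $\rho$; the commutation $a\sigma=\sigma a$ translates into $g^{-\sigma}fg^\sigma=gfg^{-1}$ for all $f\in E(M)$, so $g^\sigma g$ centralises $E(M)$. By Skolem--Noether (or equivalently because the centre of $E(M)$ is $k\cdot 1_M$) we obtain $g^\sigma g=\lambda 1_M$ for some $\lambda\in k^\times$. Rescaling $g$ by $\sqrt{\lambda^{-1}}\in k^\times$, available because $k$ is algebraically closed, produces an isometry of $B$ with the same image in $\op{PGL}(M)$ as $a$.

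Uniqueness of the lift then falls out by the same trick: any two lifts $X,Y:G\to\op{GL}(M,B)$ of $\theta$ differ by a function $\gamma:G\to k^\times$ with $Y(g)=\gamma(g)X(g)$, and applying $\sigma$ (which agrees with inversion on $\op{GL}(M,B)$) yields $Y(g)^{-1}=\gamma(g)X(g)^{-1}$, forcing $\gamma(g)^2=1$ and therefore $\gamma\equiv 1$ in characteristic $2$. I expect the surjectivity step to be the main obstacle, since it is where genuine structural input enters: Skolem--Noether pins $g^\sigma g$ down to a scalar, and the quadratic closure of $k$ corrects that scalar to $1$. The injectivity and uniqueness pieces are really the same observation, namely that in characteristic $2$ the squaring map on $k^\times$ has trivial kernel.
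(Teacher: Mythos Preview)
Your proposal is correct and follows essentially the same route as the paper's own proof: both establish that $\hat\rho:\op{GL}(M,B)\to\op{PGL}(M,\sigma)$ is a group isomorphism by the same injectivity argument (scalars in $\op{GL}(M,B)$ satisfy $\lambda=\lambda^{-1}$, hence $\lambda=1$ in characteristic $2$) and the same surjectivity argument (Skolem--Noether gives $g^\sigma g=\lambda1_M$, then rescale by $\sqrt{\lambda^{-1}}$), and the uniqueness step is handled identically via the function $\gamma$. Your framing is marginally cleaner in that once $\hat\rho$ is known to be an isomorphism the lift $\hat\rho^{-1}\circ\theta$ is automatically unique, making the separate $\gamma$-argument redundant---but the paper includes it too.
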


We recall some results from \cite[Appendix A]{MurrayPFS}. Suppose that $e_1$ and $e_2$ are orthogonal idempotents in $E(M)$ such that $1_M=e_1+e_2$ and $e_1^\sigma=e_2$. Set $M_i=e_iM$. So $M=M_1\oplus M_2$ and $B$ is zero on each of $M_1$ and $M_2$ and is symplectic on $M$. The stabilizer of $\{M_1,M_2\}$ in $\op{GL}(M)$ is the group $\op{GL}(M_1,M_2)=\op{GL}(M_1)\times\op{GL}(M_2):\langle s\rangle$. Here we can and do assume that $s$ is an involution which interchanges $M_1$ and $M_2$.

Let $\op{PGL}(M_1,M_2)\cong\op{PGL}(M_1)\wr S_2$ be the group of automorphisms of the semi-simple $k$-algebra $E(M_1)\times E(M_2)$. Now $\op{GL}(M_1,M_2)$ acts by conjugation on $E(M_1)\times E(M_2)$ and the resulting map $\phi:\op{GL}(M_1,M_2)\rightarrow\op{PGL}(M_1,M_2)$ is surjective. Moreover ker$(\phi)=\{ae_1+be_2\mid a,b\in k^\times\}\cong k^\times\times k^\times$.

Set $\op{Sp}(M_1,M_2)=\op{GL}(M,B)\cap\op{GL}(M_1,M_2)$. If\/ $\tau$ is transposition then
$$
\op{Sp}(M_1,M_2)=\{(g,sg^{-1}s)\in\op{GL}(M_1)\times\op{GL}(M_2)\mid sgs=g^{-\tau}\}:\langle s\rangle
$$
Now let $\op{PGL}(M_1,M_2,\sigma)$ be the centralizer of\/ $\sigma$ in $\op{PGL}(M_1,M_2)$. Then $\phi$ restricts to a surjective map $\theta:\op{Sp}(M_1,M_2)\rightarrow\op{PGL}(M_1,M_2,\sigma)$. The kernel of\/ $\theta$ is $K=\{(a,a^{-1})\mid a\in k^\times\}$. So $K\cong k^\times$ and $s$ inverts each element of\/ $K$.

\begin{Lemma}\label{L:E1xE2}
Each projective representation $\rho:G\rightarrow\op{PGL}(M_1,M_2,\sigma)$ is realised by a group representation $\chi:H\rightarrow\op{Sp}(M_1,M_2)$ which arises from a commutative diagram of finite groups with exact rows
$$
\begin{CD}
1@>>>O@>{\rm inc}>>H@>\nu>>G@>>>1\\
 && @VV{\eta}V@VV{\chi}V@VV{\rho}V&&\\
1@>>>K@>{\rm inc}>>\op{Sp}(M_1,M_2)@>{\theta}>>\op{PGL}(M_1,M_2,\sigma)@>>>1\\
\end  {CD}
$$
Here $O$ is a cyclic group of odd order, $\eta$ is injective, ${[H\!:\!C_H(O)]\!\leq\! 2}$ and all elements of\/ $H\backslash C_H(O)$ invert $O$.
\begin{proof}
The pull-back diagram associated with $\rho$ and $\theta$ is 
$$
\begin{CD}
1@>>>K@>{\rm inc}>>\hat{G}@>\nu>>G@>>>1\\
 && @VV{=}V@VV{\chi}V@VV{\rho}V&&\\
1@>>>K@>{\rm inc}>>\op{Sp}(M_1,M_2)@>{\theta}>>\op{PGL}(M_1,M_2,\sigma)@>>>1\\
\end  {CD}
$$

Every element of\/ $G$ centralizes or inverts $K$. In this way $K\cong k^\times$ is a (possibly non-trivial) ${\mathbb Z}G$-module. Set $\gamma(\lambda)=\lambda^{|G|}$, for all $\lambda\in K$. As $k$ is algebraically closed, $\gamma$ is a surjective endomorphism of\/ $K$. We have a short exact sequence of abelian groups
$$
\begin{CD}
1@>>>O@>{\eta}>>K@>\gamma>>K@>>>1.
\end{CD}
$$
Here $O$ is the set of roots of\/ $x^{|G|}-1$ in $k$. So $O$ is a finite group. This induces a long exact sequence in cohomology, including
$$
\begin{CD}
\dots@>>>{\rm H}^2(G,O)@>{\eta_*}>>{\rm H}^2(G,K)@>\gamma_*>>{\rm H}^2(G,K)@>>>\dots.
\end{CD}
$$
Now $\gamma_*$ is the zero map, as multiplication by $|G|$ annihilates ${\rm H}^2(G,K)$. Let $d\in{\rm H}^2(G,K)$ be the factor set associated with
$$
\begin{CD}
1@>>>K@>{\rm inc}>>\hat{G}@>\nu>>G@>>>1.
\end{CD}
$$
Then there exists $c\in{\rm H}^2(G,O)$ mapping onto $d$. This gives us the commutative diagram in the statement of the Lemma.
\end{proof}
\end{Lemma}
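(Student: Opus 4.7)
The plan is to realise $\rho$ by a group representation via a two-step cohomological reduction: first pass to a (generally infinite) pull-back, then trim it down using the divisibility of $k^\times$.

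To begin, I would form the pull-back of $\rho$ along $\theta$: set $\hat G:=\{(g,s)\in G\times\op{Sp}(M_1,M_2)\mid\rho(g)=\theta(s)\}$, with projections $\nu:\hat G\to G$ and $\chi:\hat G\to\op{Sp}(M_1,M_2)$. This immediately yields an extension $1\to K\to\hat G\to G\to 1$ in which $\chi$ is a genuine group homomorphism lifting $\rho$. The resulting conjugation $G$-action on $K$ is determined by the action of $\op{Sp}(M_1,M_2)$ on its normal subgroup $K$: since $s$ inverts $K$ while every element of $\op{Sp}(M_1,M_2)$ that preserves each of $M_1,M_2$ centralizes $K$, this $G$-action factors through a homomorphism $G\to\{\pm 1\}$.

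Next, I would produce a finite-kernel refinement of $\hat G$. Since $\op{char}(k)=2$ and $k$ is algebraically closed, the $|G|$-power map $\gamma:K\to K$ is surjective with kernel $O:=\mu_{|G|}(k)$, which is finite cyclic of odd order ($k^\times$ has no $2$-torsion in characteristic $2$). The subgroup $O$ is characteristic in $K$, hence $G$-stable, so
$$
1\to O\to K\stackrel{\gamma}{\to}K\to 1
$$
is a short exact sequence of $\mathbb{Z}G$-modules. Its long exact cohomology sequence contains ${\rm H}^2(G,O)\stackrel{\eta_*}{\to}{\rm H}^2(G,K)\stackrel{\gamma_*}{\to}{\rm H}^2(G,K)$. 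Because $|G|$ annihilates ${\rm H}^2(G,K)$, the map $\gamma_*$ is zero, so $\eta_*$ is surjective. Let $d\in{\rm H}^2(G,K)$ be the class of $\hat G$ and pick $c\in{\rm H}^2(G,O)$ with $\eta_*(c)=d$. The extension $1\to O\to H\to G\to 1$ classified by $c$ then admits a canonical morphism of extensions to $\hat G$ which equals $\eta$ on kernels, and composition with $\chi$ yields the required representation $H\to\op{Sp}(M_1,M_2)$. The claimed properties of $O$ are immediate: $\eta$ is injective by construction, and because the $G$-action on $K$ factors through $\{\pm 1\}$, the same holds for the induced action on $O$, forcing $[H:C_H(O)]\leq 2$ with the non-trivial coset inverting $O$.

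The main obstacle I foresee is just keeping the twisting straight: one must verify that $O$ is $G$-stable so that the long exact sequence is an exact sequence of $\mathbb{Z}G$-modules, and that the class of $\hat G$ in ${\rm H}^2(G,K)$ is computed with respect to the correct (possibly non-trivial) module structure. Once this is in place, the divisibility of $k^\times$ combined with the standard fact that $|G|\cdot{\rm H}^2(G,K)=0$ renders the cohomological reduction completely formal.
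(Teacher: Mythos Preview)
Your proposal is correct and follows essentially the same route as the paper: form the pull-back $\hat G$, use the $|G|$-power map on $K\cong k^\times$ to produce the short exact sequence $1\to O\to K\to K\to 1$ of $\mathbb{Z}G$-modules, and then invoke the long exact sequence in $\mathrm{H}^2$ together with $|G|\cdot\mathrm{H}^2(G,K)=0$ to lift the extension class to $\mathrm{H}^2(G,O)$. You are in fact slightly more explicit than the paper in justifying why $O$ is cyclic of odd order (no $2$-torsion in $k^\times$ when $\operatorname{char}(k)=2$) and why $O$ is $G$-stable.
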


We mention that Theorem A.5 in \cite{MurrayPFS} wrongly claims (in the notation used here) that $H$ is a central extension of\/ $G$. Now \cite[7.2]{MurrayPFS} relies on Theorem A.5, but does not require $O\leq \op{Z}(H)$. So 7.2 is still correct.

\section{Induction and Bilinear Forms}\label{S:Induction}

Throughout this section $M$ is a $kG$-module, $B$ is a symmetric $G$-form on $M$ and $\sigma$ is the adjoint of $B$ on $E(M)$. In many results we could take $B$ to be symplectic. We will require $\op{char}(k)=2$ in part \ref{SS:2applications}.

\subsection{Induction and Mackey's Formula}\label{SS:Induction}

For $H$ a subgroup of $G$ the trace map ${\op{tr}}_H^G:E_H(M)\rightarrow E_G(M)$ is the $k$-linear map ${\op{tr}}_H^G(f):=\sum{^g\!}f$, for ${f\in E_H(M)}$, where $g$ ranges over a left transversal $G/H$ to $H$ in $G$. Its image is an ideal of\/ $E_G(M)$, denoted $E_H^G(M)$. The $\sigma$-invariant elements in $E_H(M)$ form a subspace, but not an ideal, of\/ $E_H(M)$.

If $L$ is a $kH$-module, the induced module $\op{Ind}_H^GL$ is a direct sum of the $k$-vector spaces ${^g\!}L$, as $g$ ranges over $G/H$. Here ${^g\!}L=g\otimes L$ is the $k{^g\!}H$-module such that $ghg^{-1}(g\otimes \ell)=g\otimes h\ell$, for all $h\in H$ and $\ell\in L$. Recall that $M$ is said to be $H$-projective if $M\mid\op{Ind}_H^GL$, for some $kH$-module $L$.

Let $B_L$ be a symmetric $H$-form on $L$. The induced symmetric $kG$-module ${\op{Ind}}_H^G(L,B_L)$ is ${\op{Ind}}_H^GL$ with the induced $G$-form $B_L^G$, where
$$
B_L^G(g_1\otimes \ell_1,g_2\otimes \ell_2)=
\left\{
\begin{array}{ll}
B_L(g_2^{-1}g_1\ell_1,\ell_2),&\quad\mbox{if\/ $g_1H=g_2H$.}\\
0,&\quad\mbox{if\/ $g_1H\ne g_2H$.}
\end{array}
\right.
$$
Let ${^g\!}B_L$ denote the restriction of\/ $B_L^G$ to ${^g\!}L$. Then ${\op{Ind}}_H^G(L,B_L)$ is the orthogonal direct sum of the symmetric $k$-spaces $({^g\!}L,{^g\!}B_L)$. It is clear that $B_L^G$ is symplectic if and only if\/ $B_L$ is symplectic.

There is a version of Mackey's formula \cite[3.1.9]{NagaoTsushima} for symmetric modules. However its usefulness is limited by the absence of the Krull-Schmidt theorem:

\begin{Lemma}\label{L:Mackey}
Given ${K\leq G}$, there is an isomorphism of symmetric $kK$-modules 
$$
\op{Res}_K^G\op{Ind}_H^G(L,B_L)\cong\mathop{\perp}\limits_{g\in K\backslash G/H}\op{Ind}_{K\cap{^g\!}H}^K\op{Res}_{K\cap{^g\!}H}^{{^g\!}H}({^g\!}L,{^g\!}B_L).
$$
\begin{proof}
Let $g\in G$. The assignment $i\rightarrow ig$ maps $K/{K\cap{^g\!}H}$ to a set of representatives for the left cosets of\/ $H$ in $KgH$ and $\sum_{i\in K/K\cap{^g\!}H}{^{ig}\!}L\cong\op{Ind}_{K/K\cap{^g\!}H}^K{^g\!}L$. This induces an isomorphism $\sum_{i\in K/K\cap{^g\!}H}({^{ig}\!}L,{^{ig}\!}B_L)\cong\op{Ind}_{K/K\cap{^g\!}H}^K({^g\!}L,{^g\!}B_L)$ of symmetric $kK$-modules.
\end{proof}
\end{Lemma}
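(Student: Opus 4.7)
The plan is to lift the ordinary Mackey decomposition of $\op{Res}_K^G\op{Ind}_H^G L$ to an isometry by exploiting the block-diagonal structure of the induced form $B_L^G$ with respect to left cosets of $H$ in $G$. Recall that $\op{Ind}_H^G L=\bigoplus_{gH\in G/H}{^g}L$ as $k$-spaces, and by the very definition of $B_L^G$ given in \ref{SS:Induction}, this decomposition is orthogonal: $B_L^G$ vanishes between ${^{g_1}}L$ and ${^{g_2}}L$ whenever $g_1H\ne g_2H$, and restricts to ${^g}B_L$ on each ${^g}L$.

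First I would regroup the left-coset decomposition according to $K$-orbits on $G/H$, which are exactly the double cosets in $K\backslash G/H$. Orthogonality between different orbits is then immediate: if $g_1,g_2$ lie in distinct double cosets, then for all $k_1,k_2\in K$ the left cosets $k_1g_1H$ and $k_2g_2H$ are distinct, so $B_L^G(k_1g_1\otimes \ell_1,k_2g_2\otimes \ell_2)=0$. This supplies the orthogonality of the summands on the right-hand side.

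Next, for a fixed double coset representative $g$, I would set $V_g:=\bigoplus_{i\in K/(K\cap{^g}H)}{^{ig}}L$, noting that the $K$-stabilizer of the left coset $gH\in G/H$ is $K\cap gHg^{-1}=K\cap{^g}H$. The standard module-level Mackey identification $i\otimes(g\otimes \ell)\mapsto ig\otimes \ell$ gives a $kK$-isomorphism $\op{Ind}_{K\cap{^g}H}^K\op{Res}_{K\cap{^g}H}^{{^g}H}{^g}L\cong V_g$. It remains to check that, under this identification, $B_L^G|_{V_g}$ matches the form induced from $\op{Res}_{K\cap{^g}H}^{{^g}H}{^g}B_L$. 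Both forms vanish unless the two classes $i(K\cap{^g}H)$ and $i'(K\cap{^g}H)$ coincide, and when they do, a direct computation using the defining formula for $B_L^G$ reduces both sides to $B_L(g^{-1}(i'^{-1}i)g\cdot \ell,\ell')$ (noting that $i'^{-1}i\in K\cap{^g}H$ acts on ${^g}L$ via its inclusion in ${^g}H$, i.e.\ as $g\otimes \ell\mapsto g\otimes g^{-1}(i'^{-1}i)g\ell$).

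The main obstacle is purely notational: keeping the various identifications straight, particularly that $(ig\otimes \ell)\in{^{ig}}L$ on the induced-module side really corresponds to $(i\otimes(g\otimes \ell))$ on the twice-induced side, and that the $K\cap{^g}H$-action used to form the right-hand side is exactly the one inherited from ${^g}H$. Once this check is carried out for a single double coset, taking the orthogonal sum of the $V_g$'s over $g\in K\backslash G/H$ delivers the claimed isometry.
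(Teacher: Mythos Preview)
Your proposal is correct and follows essentially the same approach as the paper: both arguments use the standard Mackey decomposition of $\op{Res}_K^G\op{Ind}_H^G L$ via the bijection $i\mapsto ig$ from $K/(K\cap{^g}H)$ to the left $H$-cosets in $KgH$, and then observe that the block-diagonal structure of $B_L^G$ (vanishing between distinct left cosets, restricting to ${^g}B_L$ on each ${^g}L$) makes this module isomorphism an isometry. Your write-up is simply more explicit than the paper's about the orthogonality between distinct double cosets and the verification that the forms agree on each $V_g$.
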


\subsection{Higman's Criterion}\label{SS:Higman}

Higman's Criterion \cite[4.2.2]{NagaoTsushima} is the assertion that the following are equivalent:
\begin{enumerate}
 \item[(1)] $M$ is $H$-projective.
 \item[(2)] $\op{tr}_H^G(\alpha)$ is a unit in $E_G(M)$, for some $\alpha\in E_H(M)$.
 \item[(3)] $M\mid\op{Ind}_H^G\op{Res}_H^G M$.
\end{enumerate}
We will generalise the equivalence $(1)\Longleftrightarrow(2)$ to symmetric $kG$-modules. However, the analogue of (3) is a strictly stronger statement.

Recall from Section \ref{SS:Endomorphisms} that $\{B_\theta\mid\theta\in E_G(M)^\times,\theta^\sigma=\theta\}$ give all non-degenerate $G$-invariant symmetric bilinear forms on $M$. Here are some important definitions:

\begin{Definition}\label{D:symmetricProjectivity}
Let $\theta$ be a $\sigma$-invariant unit in $E_G(M)$. Then we say that
\begin{itemize}
\item $\theta$ is $(H,\sigma)$-projective if\/ ${\theta=\op{tr}_H^G(\alpha)}$ for some $\alpha\in E_H(M)$ with $\alpha^\sigma=\alpha$.
\item $B_\theta$ is $H$-projective if\/ $(M,B_\theta)$ is a component of\/ $\op{Ind}_H^G(L,B_L)$ for some symmetric $kH$-module $(L,B_L)$.
\item $M$ is symmetrically $H$-projective if it affords a non-degenerate $G$-invariant symmetric bilinear form which is $H$-projective.
\end{itemize}
\end{Definition}
We note that if $B_\theta$ is $H$-projective, then it is projective relative to every subgroup of $G$ containing a $G$-conjugate of $H$.

\begin{Lemma}\label{L:Higman}
Let $\theta$ be a $\sigma$-invariant unit in $E_G(M)$. Then $\theta$ is $(H,\sigma)$-projective if and only if\/ $B_\theta$ is $H$-projective. More precisely, if\/ $\theta={\op{tr}}_H^G(\alpha)$ for some $\sigma$-invariant $\alpha\in E_H(M)$, then there is a $kG$-isometry $(M,B_\theta)\rightarrow{\op{Ind}}_H^G(\alpha M,\hat B_\alpha)$.
\begin{proof}
Here $\hat B_\alpha$ is the $H$-form on $\alpha M$ defined in Lemma \ref{L:PerfectGPairing}.

Suppose first that $\theta={\op{tr}}_H^G(\alpha)$ for some $\sigma$-invariant $\alpha\in E_H(M)$. Then $1\otimes\alpha$ can be regarded as a $kH$-homomorphism $M\rightarrow\op{Ind}_H^G(\alpha M)$. So $\phi={\op{tr}}_H^G(1\otimes\alpha)$ is a $kG$-homomorphism $M\rightarrow{\op{Ind}}_H^G(\alpha M)$. Now
$$
\begin{array}{lll}
\hat B_\alpha^G(\phi m_1,\phi m_2)
&=\sum\limits_{g\in G/H}\hat B_\alpha(\alpha g m_1,\alpha g m_2)
&=\sum\limits_{g\in G/H}B(\alpha g m_1,g m_2)\\
&=B({\op{tr}}_H^G(\alpha)m_1,m_2)
&=B_\theta(m_1,m_2).
\end{array}
$$
So $\phi$ defines an isometry $(M,B_\theta)\rightarrow{\op{Ind}}_H^G(\alpha M,\hat B_\alpha)$. Thus $B_\theta$ is $H$-projective.

Now suppose that $B_\theta$ is $H$-projective. Then there is a $kG$-isometry $\phi:(M,B_\theta)\rightarrow{\op{Ind}}_H^G(L,B_1)$ for some symmetric $kH$-module $(L,B_1)$. Let $e\in E_H(\op{Ind}_H^GL)$ be the orthogonal projection onto $1\otimes L$. Now $B_1(e\phi\_,e\phi\_)$ is a symmetric $H$-form on $M$. So there exists $\alpha\in E_H(M)$ such that $\alpha^\sigma=\alpha$ and
$$
B_1(e\phi m_1,e\phi m_2)=B(\alpha m_1,m_2),\quad\mbox{for all $m_1,m_2\in M$.}
$$
As $\phi$ is an isometry, we have
$$
\begin{array}{lll}
B_\theta(m_1,m_2)
&=B_1^G(\phi m_1,\phi m_2)
&=\sum\limits_{g\in G/H}B_1(e\phi g m_1,e\phi g m_2)\\
&=\sum\limits_{g\in G/H}B(\alpha g m_1,gm_2)
&=B({\op{tr}}_H^G(\alpha)m_1,m_2).
\end{array}
$$
So $\theta={\op{tr}}_H^G(\alpha)$ is $(H,\sigma)$-projective.
\end{proof}
\end{Lemma}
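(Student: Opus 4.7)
The strategy is to mimic Higman's classical criterion while tracking the self-adjoint structure, so that the $H$-projective direction produces the specific induced symmetric module $\op{Ind}_H^G(\alpha M,\hat B_\alpha)$ prescribed by the statement, with $\hat B_\alpha$ the perfect $H$-pairing from Lemma \ref{L:PerfectGPairing}.

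For the direction $(H,\sigma)$-projective $\Rightarrow$ $B_\theta$ $H$-projective, I would start from $\theta=\op{tr}_H^G(\alpha)$ with $\alpha^\sigma=\alpha\in E_H(M)$ and introduce the natural $kH$-homomorphism $M\to\op{Ind}_H^G(\alpha M)$ sending $m\mapsto 1\otimes\alpha m$. Applying relative trace gives a $kG$-homomorphism $\phi(m)=\sum_{g\in G/H}g\otimes\alpha(g^{-1}m)$. Checking that $\phi$ is an isometry into $\op{Ind}_H^G(\alpha M,\hat B_\alpha)$ reduces to a coset-by-coset calculation: distinct cosets are orthogonal in the induced form, so only diagonal terms survive, and each equals $\hat B_\alpha(\alpha g^{-1}m_1,\alpha g^{-1}m_2)=B(\alpha g^{-1}m_1,g^{-1}m_2)=B(g\alpha g^{-1}m_1,m_2)$ by $G$-invariance of $B$. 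Summing over $G/H$ collapses to $B(\theta m_1,m_2)=B_\theta(m_1,m_2)$; non-degeneracy then forces the image of $\phi$ to be a non-degenerate submodule, hence a $B$-direct summand by Lemma \ref{L:projection}.

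For the converse, suppose there is a $kG$-isometry $\phi:(M,B_\theta)\to\op{Ind}_H^G(L,B_L)$. The induced form restricts to $B_L$ on $1\otimes L$ and so is non-degenerate there, so Lemma \ref{L:projection} yields a $\sigma$-invariant $H$-endomorphism $e$ of $\op{Ind}_H^G L$ projecting orthogonally onto $1\otimes L$. The pulled-back form $(m_1,m_2)\mapsto B_L(e\phi m_1,e\phi m_2)$ on $M$ is $H$-invariant and symmetric, so by the $E_H(M)\cong\op{Hom}_{kH}$-parametrisation of bilinear forms (Section \ref{SS:Endomorphisms}) it equals $B_\alpha$ for a unique $\sigma$-invariant $\alpha\in E_H(M)$. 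Decomposing $\op{Ind}_H^G L=\bigoplus_{g\in G/H}g\otimes L$, the $g$-component of $\phi(m)$ is $g\cdot e\phi(g^{-1}m)$, so $B_L^G(\phi m_1,\phi m_2)=\sum_{g\in G/H}B_L(e\phi g^{-1}m_1,e\phi g^{-1}m_2)=\sum_{g\in G/H}B(g\alpha g^{-1}m_1,m_2)=B(\op{tr}_H^G(\alpha)m_1,m_2)$, and non-degeneracy of $B$ gives $\theta=\op{tr}_H^G(\alpha)$.

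The main subtlety, and the single place this diverges from the classical Higman criterion, is the demand that $\alpha$ be $\sigma$-invariant, not merely some $H$-endomorphism with the right trace. This self-adjointness is exactly what Lemma \ref{L:projection} delivers for the orthogonal projection $e$ onto $1\otimes L$, and it then transfers to $\alpha$ via the uniqueness in the bijection between $H$-invariant symmetric bilinear forms on $M$ and $\sigma$-fixed elements of $E_H(M)$. Everything else is a bookkeeping exercise with induced forms and the relative trace.
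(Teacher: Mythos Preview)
Your proof is correct and follows essentially the same route as the paper: in both directions you build the same map $\phi=\op{tr}_H^G(1\otimes\alpha)$ (respectively pull back along the orthogonal projection $e$ onto $1\otimes L$), and the core computation reducing $\hat B_\alpha^G(\phi m_1,\phi m_2)$ or $B_L^G(\phi m_1,\phi m_2)$ to $B(\op{tr}_H^G(\alpha)m_1,m_2)$ via the diagonal coset terms is identical to the paper's. Your added remarks on why $\alpha$ comes out $\sigma$-invariant and why the image of $\phi$ is an orthogonal direct summand make explicit what the paper leaves implicit, but there is no substantive difference in approach.
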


We now prove an analogue of Higman's Criterion for symmetric modules:

\begin{Proposition}\label{P:Higman}
The following are equivalent:
\begin{enumerate}
\item[(1)\hphantom{$'$}] $M$ is symmetrically $H$-projective.
\item[(2)\hphantom{$'$}] $\op{tr}_H^G(\alpha)$ is a unit in $E_G(M)$, for some $\sigma$-invariant $\alpha\in E_H(M)$.
\item[(2)$'$] $B_\theta$ is $H$-projective for some $\sigma$-invariant unit $\theta\in E_G(M)$.
\end{enumerate}
\begin{proof}
(1) and (2)$'$ are equivalent from the definitions. Lemma \ref{L:Higman} shows that (2) and (2)$'$ are equivalent.
\end{proof}
\end{Proposition}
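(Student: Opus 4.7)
The plan is that this proposition is essentially a bookkeeping exercise: all the real work has been done in Lemma \ref{L:Higman}, and what remains is to unpack the definitions correctly.

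First I would observe the equivalence $(1)\Longleftrightarrow(2)'$. By Definition \ref{D:symmetricProjectivity}, $M$ is symmetrically $H$-projective precisely when $M$ affords some non-degenerate $G$-invariant symmetric form which is $H$-projective. As noted in Section \ref{SS:Endomorphisms}, every such form on $M$ has the shape $B_\theta$ for a $\sigma$-invariant unit $\theta\in E_G(M)$, and conversely every $\sigma$-invariant unit $\theta$ gives rise to a non-degenerate $G$-invariant symmetric form $B_\theta$. So the two conditions are literally the same statement phrased at different levels.

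Next I would handle $(2)\Longleftrightarrow(2)'$, which is the content of Lemma \ref{L:Higman}. That lemma says precisely that, for a fixed $\sigma$-invariant unit $\theta$, the form $B_\theta$ is $H$-projective if and only if $\theta$ is $(H,\sigma)$-projective, i.e.\ $\theta=\op{tr}_H^G(\alpha)$ for some $\sigma$-invariant $\alpha\in E_H(M)$. Quantifying over $\theta$ on each side then yields the required biconditional: (2) says some $\sigma$-invariant trace $\op{tr}_H^G(\alpha)$ is a unit, which via $\theta:=\op{tr}_H^G(\alpha)$ produces a $\sigma$-invariant unit $\theta$ such that $B_\theta$ is $H$-projective; conversely, if $B_\theta$ is $H$-projective for some $\sigma$-invariant unit $\theta$, Lemma \ref{L:Higman} produces a $\sigma$-invariant $\alpha\in E_H(M)$ with $\theta=\op{tr}_H^G(\alpha)$, and this $\alpha$ witnesses (2).

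Combining these two equivalences gives $(1)\Longleftrightarrow(2)'\Longleftrightarrow(2)$, which is the proposition. There is no real obstacle here beyond ensuring that the quantifier on $\theta$ is handled consistently between (2) and $(2)'$; the substantive content, namely the translation between the trace-map criterion and the existence of an isometry into $\op{Ind}_H^G(\alpha M,\hat B_\alpha)$, is already packaged inside Lemma \ref{L:Higman}.
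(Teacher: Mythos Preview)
Your proposal is correct and follows exactly the same approach as the paper: the equivalence $(1)\Longleftrightarrow(2)'$ is immediate from the definitions (together with the parametrization of $G$-forms by $\sigma$-invariant units in $E_G(M)$), and $(2)\Longleftrightarrow(2)'$ is Lemma~\ref{L:Higman}. You have simply unpacked the two sentences of the paper's proof in more detail.
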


Be aware that if $(M,B_\theta)$ is a component of $\op{Ind}_H^G\op{Res}_H^G(M,B_\theta)$ then it does not follow that it is a component of $\op{Ind}_K^G\op{Res}_K^G(M,B_\theta)$ when $H\leq K\leq G$. So we merely state the following analogue of condition (3) in Higman's criterion:

\begin{Lemma}\label{L:H-strong_proj}
Let $\theta$ be a $\sigma$-invariant unit in $E_G(M)$. Then
$$
(M,B_\theta)\mid\op{Ind}_H^G\op{Res}_H^G(M,B_\theta)
$$
if and only if ${{\op{tr}}_H^G(\alpha\theta\alpha^\sigma)=\theta}$, for some ${\alpha\in E_H(M)}$.
\end{Lemma}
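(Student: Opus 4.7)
The plan is to parametrize $kG$-homomorphisms $\phi\colon M\to\op{Ind}_H^G\op{Res}_H^G M$ via Frobenius reciprocity and then pull back the induced form $B_\theta^G$ to read off the condition for $\phi$ to be an isometry. Each such $\phi$ is determined by a unique $\beta\in E_H(M)$ via the formula $\phi_\beta(m)=\sum_{g\in G/H}g\otimes\beta(g^{-1}m)$.

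First I would compute the pulled-back form. Using the definition of $B_\theta^G$ from Section \ref{SS:Induction}, the orthogonality of distinct cosets of $H$, the adjoint relation defining $\sigma$, and the $G$-invariance of $B$, a short computation yields
$$
B_\theta^G(\phi_\beta m_1,\phi_\beta m_2)=\sum_{g\in G/H}B_\theta(\beta g^{-1}m_1,\beta g^{-1}m_2)=B\bigl(\op{tr}_H^G(\beta^\sigma\theta\beta)m_1,m_2\bigr).
$$
Since $B$ is non-degenerate, $\phi_\beta$ is a $kG$-isometry $(M,B_\theta)\to\op{Ind}_H^G\op{Res}_H^G(M,B_\theta)$ precisely when $\op{tr}_H^G(\beta^\sigma\theta\beta)=\theta$.

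Next I would observe that any isometric embedding of $(M,B_\theta)$ into a symmetric $kG$-module automatically exhibits $(M,B_\theta)$ as a component of that module: the image carries the non-degenerate form $B_\theta$, and so by Lemma \ref{L:projection} it is a $B$-direct summand. Therefore $(M,B_\theta)\mid\op{Ind}_H^G\op{Res}_H^G(M,B_\theta)$ is equivalent to the existence of some $\beta\in E_H(M)$ with $\op{tr}_H^G(\beta^\sigma\theta\beta)=\theta$. Substituting $\alpha=\beta^\sigma$ (hence $\beta=\alpha^\sigma$) rewrites the trace condition as $\op{tr}_H^G(\alpha\theta\alpha^\sigma)=\theta$, which is the stated equivalence.

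There is no real obstacle here: the lemma is essentially a direct consequence of Frobenius reciprocity together with the form computation, both of which are routine. The only subtle point is ensuring that ``component'' on the left-hand side coincides with ``image of a $kG$-isometry'', which the non-degeneracy of $B_\theta$ and Lemma \ref{L:projection} guarantee.
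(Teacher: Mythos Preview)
Your argument is correct. The paper actually states this lemma without proof (the text preceding it reads ``So we merely state the following analogue of condition (3) in Higman's criterion''), so there is nothing to compare against directly. Your approach is exactly the natural one and mirrors the computation in the proof of Lemma~\ref{L:Higman}: parametrize $kG$-maps $M\to\op{Ind}_H^G\op{Res}_H^G M$ by $E_H(M)$ via Frobenius reciprocity, pull back the induced form, and read off the trace condition. One minor remark: your invocation of Lemma~\ref{L:projection} is not strictly needed, since by the paper's definition $(L,B_L)\mid(M,B)$ \emph{means} that there exists a $kG$-isometry $(L,B_L)\to(M,B)$; the passage to an orthogonal direct summand is already built in.
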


\subsection{Green and Symmetric Vertices}\label{SS:2applications}

Now assume that $M$ is indecomposable and $\op{char}(k)=2$. We prove the first statement of Theorem \ref{T:vertex}. The main technical problem is that trace maps do not behave well with respect to adjoints. So Mackey's formula for the product of a pair of traces is not useful in our context. Instead we cancel pairs of terms in {\em triple} products of traces.

\begin{Lemma}\label{L:GreenSymmetric}
Each symmetric vertex of\/ $M$ contains a Green vertex of\/ $M$ with index at most $2$.
\begin{proof}
Let $T$ be a symmetric vertex of\/ $M$ and let $V$ be a Green vertex of $M$ which is contained in $T$. There is nothing to prove if\/ $V=T$. So assume that $V\ne T$.

By Higman's criterion $1_M=\op{tr}_V^G(\alpha)$, for some $\alpha\in E_V(M)$. Now let $\theta$ be a $\sigma$-invariant unit in $E_G(M)$ which is $(T,\sigma)$-projective. So $\theta={\op{tr}}_T^G(\beta)$ for some $\sigma$-invariant $\beta\in E_T(M)$. Consider the triple product
$$
\theta={\op{tr}}_V^G(\alpha){\op{tr}}_T^G(\beta){\op{tr}}_V^G(\alpha^\sigma)=\sum_{a,b\in G/V,c\in G/T}({^a}\alpha)({^c}\beta)({^b}\alpha^\sigma).
$$

Each $G$-orbit in $G/V\times G/T\times G/V$ contains a 3-tuple of the form $(aV,T,bV)$. We say that the orbit is:
\begin{itemize}
\item diagonal if\/ $aV=bV$,
\item symmetric if\/ $aV\ne bV$ but the orbit contains $(bV,T,aV)$,
\item antisymmetric if the orbit does not contain $(bV,T,aV)$.
\end{itemize}
We denote the collections of such orbits by ${\mathcal O}_d,{\mathcal O}_s$ and ${\mathcal O}_a$ respectively.

The stabilizer of\/ $(aV,T,bV)$ is ${^a}V\cap{^b}V\cap T$. So the orbit sum is
$$
\op{tr}(a,b):={\op{tr}}_{{^a}V\cap{^b}V\cap T}^G({^a}\alpha\,\beta({^b}\alpha^\sigma))\in E_G(M).
$$
Now $\op{tr}(a,b)^\sigma=\op{tr}(b,a)$. So $\theta$ is a sum, in $E_G(M)$, of\/ $\sigma$-invariant terms 
$$
\theta=\sum_{{\mathcal O}_d}\op{tr}(a,a)+\sum_{{\mathcal O}_s}\op{tr}(a,b)+\sum_{{\mathcal O}_a}(\op{tr}(a,b)+\op{tr}(b,a)).
$$

Write $\op{tr}(a,b)=\lambda1_M+j$, with $\lambda\in k^\times$ and $j\in J(E_G(M))$. Then for each pair of anti-symmetric orbits  $\op{tr}(a,b)+\op{tr}(b,a)=j+j^\sigma$ belongs to $J(E_G(M))$. Suppose that $\op{tr}(a,a)$ is a unit in $E_G(M)$, for some diagonal orbit. Then $B_{\op{tr}(a,a)}$ is a $({^a\!}V\cap T)$-projective symmetric $G$-form on $M$. This is impossible, as ${^a\!}V\cap T\lneq T$.

Now $\theta$ is a unit in the local ring $E_G(M)$. So we can choose a triple $(aV,T,bV)$ in a symmetric orbit such that $\op{tr}(a,b)$ is a unit. We then replace $V$ by a conjugate so that $a=1$, to simplify the notation. Then $\op{tr}(1,b)$ is a unit and $(V,T,bV)$ is in a symmetric $G$-orbit.

As $(bV,T,V)$ is $G$-conjugate to $(V,T,bV)$ there is $t\in T$ with ${tV=bV}$ and ${tbV=V}$. So $t\in N_T(V\cap{^b}V)$ and ${t^2\in V\cap{^b}V}$. Then $VbV=Vb^{-1}V$ is a self-dual double coset and $[(V\cap{^b}V)\langle t\rangle:V\cap{^b}V]=2$.

Set $\gamma:=\alpha\,\beta({^b}\alpha^\sigma)$. Then $\gamma^\sigma={^b}\alpha\,\beta \alpha^\sigma=\gamma^t$. So $\gamma+{^t}\gamma$ is fixed by both $\sigma$ and $(V\cap{^b}V)\langle t\rangle\cap T$. Moreover $\op{tr}(1,b)=\op{tr}_{(V\cap{^b}V)\langle t\rangle\cap T}^G(\gamma+{^t}\gamma)$. As $T$ is a symplectic vertex of\/ $M$, this forces $(V\cap{^b}V)\langle t\rangle=T$. But $V\ne T$ and $t^2\in V$. We deduce that $V={^b}V$, $T=V\langle t\rangle$ and $[T:V]=2$. 
\end{proof}
\end{Lemma}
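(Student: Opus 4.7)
My plan is to combine the classical Higman criterion (for the Green vertex $V$) with its symmetric analogue Lemma~\ref{L:Higman} (for the symmetric vertex $T$), while tracking the adjoint $\sigma$ throughout. Trace products behave poorly with respect to $\sigma$, so the usual Mackey-style two-factor argument is unavailable; the key device will instead be an analysis of a carefully chosen \emph{three-factor} product whose orbit structure splits into pieces compatible with $\sigma$, and whose antisymmetric contributions will automatically lie in the Jacobson radical in characteristic~$2$.

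Choose a Green vertex $V \leq T$ and assume $V \lneq T$; the goal is $[T:V]=2$. Higman's criterion gives $1_M = \op{tr}_V^G(\alpha)$ for some $\alpha \in E_V(M)$, and applying $\sigma$ with $\op{char}(k)=2$ yields $1_M = \op{tr}_V^G(\alpha^\sigma)$ as well. Because $T$ is a symmetric vertex, Lemma~\ref{L:Higman} provides a $\sigma$-invariant unit $\theta = \op{tr}_T^G(\beta) \in E_G(M)$ with $\beta^\sigma = \beta$. I would then expand
$$
\theta = \op{tr}_V^G(\alpha)\,\theta\,\op{tr}_V^G(\alpha^\sigma) = \!\!\!\!\sum_{(a,c,b)\in G/V\times G/T\times G/V}\!\!\!\!({^a}\alpha)({^c}\beta)({^b}\alpha^\sigma),
$$
and regroup by the diagonal $G$-action. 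Each $G$-orbit has a representative $(aV,T,bV)$ with stabiliser ${^a}V\cap T\cap{^b}V$, so the orbit sum is
$$
\op{tr}(a,b) := \op{tr}_{{^a}V\cap T\cap{^b}V}^G\bigl({^a}\alpha\,\beta\,{^b}\alpha^\sigma\bigr),
$$
which satisfies $\op{tr}(a,b)^\sigma = \op{tr}(b,a)$ since $\sigma$ commutes with trace and reverses products.

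I would then partition the orbits into \emph{diagonal} ($aV = bV$), \emph{symmetric} ($aV\neq bV$ and $(bV,T,aV)$ is $G$-conjugate to $(aV,T,bV)$), and \emph{antisymmetric} (otherwise). Writing $\op{tr}(a,b) = \lambda\cdot 1_M + j$ with $j\in J(E_G(M))$, antisymmetric pairs contribute $\op{tr}(a,b)+\op{tr}(b,a) = 2\lambda\cdot 1_M + j + j^\sigma \in J(E_G(M))$ using $\op{char}(k)=2$. If some diagonal $\op{tr}(a,a)$ were a unit, then $B_{\op{tr}(a,a)}$ would be a non-degenerate $({^a}V\cap T)$-projective $G$-form by Lemma~\ref{L:Higman}; minimality of $T$ would force $T\subseteq{^a}V$, hence $V=T$, contradicting $V\lneq T$. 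Since $\theta$ is a unit in the local ring $E_G(M)$, some symmetric orbit must contribute a unit; after replacing $V$ by a $G$-conjugate I may assume this occurs with $a=1$.

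For that surviving orbit, the symmetry condition furnishes $t\in T$ with $tV=bV$ and $tbV=V$, yielding $t^2\in V$, $b^{-1}t, tb\in V$, and ${^b}V = {^t}V$. Hence $t$ normalises $H := V\cap{^b}V$, and $t\notin V$ forces $[H\langle t\rangle : H] = 2$. Setting $\gamma := \alpha\,\beta\,{^b}\alpha^\sigma$, a short calculation using $\beta^\sigma=\beta$, ${^t}\alpha = {^b}\alpha$, and ${^{tb}}\alpha^\sigma = \alpha^\sigma$ gives $\gamma^\sigma = {^t}\gamma$, so $\gamma + {^t}\gamma$ is both $\sigma$-invariant and $H\langle t\rangle$-fixed. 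Because $V\leq T$ forces $H\langle t\rangle\leq T$, transitivity of the trace produces $\op{tr}(1,b) = \op{tr}_{H\langle t\rangle}^G(\gamma + {^t}\gamma)$; Lemma~\ref{L:Higman} applied to this unit, together with minimality of $T$, forces $T = H\langle t\rangle$. Therefore $|T| = 2|H| \leq 2|V|$, and combined with $V \lneq T$ this yields $[T:V]=2$ (with the bonus $V = {^b}V$). The main obstacle will be engineering the three-factor set-up and identifying the tripartite orbit decomposition: without the symmetrising outer factor $\op{tr}_V^G(\alpha^\sigma)$ the product is not $\sigma$-invariant, without the diagonal/symmetric/antisymmetric split the characteristic-$2$ cancellation of antisymmetric pairs is invisible, and the identity $\gamma^\sigma = {^t}\gamma$ is the technical heart promoting mere $H$-invariance to $H\langle t\rangle$-invariance.
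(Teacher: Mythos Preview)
Your proposal is correct and follows essentially the same argument as the paper: the same three-factor expansion $\op{tr}_V^G(\alpha)\,\op{tr}_T^G(\beta)\,\op{tr}_V^G(\alpha^\sigma)$, the same diagonal/symmetric/antisymmetric orbit trichotomy with antisymmetric pairs falling into the radical in characteristic~$2$, and the same endgame via $\gamma^\sigma={^t}\gamma$ to promote the trace to $H\langle t\rangle$ and invoke minimality of $T$. Your observation that $H\langle t\rangle\leq T$ (so the paper's intersection with $T$ is redundant) and your explicit justification that $1_M=\op{tr}_V^G(\alpha^\sigma)$ are minor clarifications rather than substantive differences.
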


We will use our second result in Proposition \ref{P:case1}.

\begin{Lemma}\label{L:Indecomposable}
Suppose that $M$ is symmetrically $H$-projective. Then $M$ is a non-degenerate component of some $\op{Ind}_H^G(L,B_L)$ where $L$ is indecomposable.
\begin{proof}
There is a $kG$-isometry $\phi:(M,B_\theta)\rightarrow\op{Ind}_H^G(L,B_L)$, where $(L,B_L)$ is a symmetric $kH$-module and $\theta$ is a $\sigma$-invariant unit in $E_G(M)$. Choose this $\phi$ with dim$(L)$ minimal. So $(L,B_L)$ is orthogonally indecomposable, by Lemma \ref{L:M->Sum}.

We claim that $L$ is indecomposable. For otherwise $L=L_1\dot+ L_2$, with $L_1^*\cong L_2$, according to Lemma \ref{L:GowWillems}. Let $e\in E_H(\op{Ind}_H^GL)$ be orthogonal projection onto $1\otimes L$ and let $e_i=ee_ie$ be projection onto $1\otimes L_i$ with kernel $1\otimes L_{3-i}$, for $i=1,2$. Now there are $\alpha_{ij}\in E_H(M)$, for $i,j=1,2$, such that 
$$
B_L(e_i\phi m_1,e_j\phi m_2)=B(\alpha_{ij}m_1,m_2),\quad\mbox{for all $m_1,m_2\in M$.}
$$
As $e=e_1+e_2$ and $\phi$ is an isometry, we get
$$
\sum_{i,j=1,2}{\op{tr}}_H^G(\alpha_{ij})=\theta.
$$
Now it can be shown that $\alpha_{ij}^\sigma=\alpha_{ji}$. So $\op{tr}_H^G(\alpha_{12})+\op{tr}_H^G(\alpha_{21})=\op{tr}_H^G(\alpha_{12})+\op{tr}_H^G(\alpha_{12})^\sigma$. This sum belongs to $J(E_G(M))$, as $E_G(M)$ is a local ring and $\op{char}(k)=2$. So ${\theta_{ii}\!:=\!{\op{tr}}_H^G(\alpha_{ii})}$ is a unit in $E_G(M)$ for some $i$. Lemma \ref{L:Higman} gives an isometry $(M,B_{\theta_{ii}})\rightarrow{\op{Ind}}_H^G(\alpha_{ii}M,\hat B_{\alpha_{ii}})$. But dim$(\alpha_{ii}M)<{\rm dim}(L)$, contradicting our choice of $L$. This establishes our claim and completes the proof.
\end{proof}
\end{Lemma}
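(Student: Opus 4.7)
The plan is to witness the symmetric $H$-projectivity of $M$ by an isometric embedding into some $\op{Ind}_H^G(L,B_L)$ in which $L$ has the smallest possible $k$-dimension, and then to rule out non-trivial $kH$-decompositions of $L$ by a double-trace calculation. By Definition \ref{D:symmetricProjectivity} together with Lemma \ref{L:Higman}, such witnesses exist; I fix $(L,B_L)$ of minimal $\op{dim}_k L$ together with a $kG$-isometry $\phi\colon(M,B_\theta)\to\op{Ind}_H^G(L,B_L)$, where $\theta$ is a $\sigma$-invariant unit in $E_G(M)$.

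The first reduction is immediate from Lemma \ref{L:M->Sum}: any non-trivial orthogonal decomposition $(L,B_L)=(L',B')\perp(L'',B'')$ induces one of $\op{Ind}_H^G(L,B_L)$, and since $M$ is indecomposable as a $kG$-module it would then be a non-degenerate component of a strictly smaller induced summand, contradicting minimality. So $(L,B_L)$ is orthogonally indecomposable.

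Next I show $L$ is itself $kH$-indecomposable by contradiction. If not, Lemma \ref{L:GowWillems} applied to $(L,B_L)$ forces $L=L_1\dot+L_2$ with $L_2\cong L_1^*$ and $B_L$ a perfect $H$-pairing between $L_1$ and $L_2$. Let $e\in E_H(\op{Ind}_H^G L)$ be orthogonal projection onto $1\otimes L$ and let $e_1,e_2\in E_H(\op{Ind}_H^G L)$ be the idempotents projecting onto $1\otimes L_1,1\otimes L_2$ with $e=e_1+e_2$. The $H$-invariant bilinear forms $(m_1,m_2)\mapsto B_L(e_i\phi m_1,e_j\phi m_2)$ on $M$ are of the shape $B_{\alpha_{ij}}$ for unique $\alpha_{ij}\in E_H(M)$; the symmetry of $B$ and $B_L$ gives $\alpha_{ij}^\sigma=\alpha_{ji}$, and the isometry property of $\phi$ yields $\theta=\sum_{i,j}\op{tr}_H^G(\alpha_{ij})$. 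The cross term $\op{tr}_H^G(\alpha_{12})+\op{tr}_H^G(\alpha_{21})$ has the form $x+x^\sigma$ with $x=\op{tr}_H^G(\alpha_{12})$, and because $E_G(M)$ is local with residue field $k$ on which the $k$-linear map $\sigma$ acts trivially, $x+x^\sigma\in J(E_G(M))$. Since $\theta$ is a unit, some diagonal $\op{tr}_H^G(\alpha_{ii})$ is a unit; Lemma \ref{L:Higman} applied to the $\sigma$-invariant $\alpha_{ii}$ then produces a $kG$-isometry $(M,B_{\op{tr}_H^G(\alpha_{ii})})\to\op{Ind}_H^G(\alpha_{ii}M,\hat B_{\alpha_{ii}})$. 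But $\alpha_{ii}$ factors through the map $m\mapsto e_i\phi m$ from $M$ to $L_i$, so $\op{dim}(\alpha_{ii}M)\le\op{dim}(L_i)<\op{dim}(L)$, contradicting minimality.

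The delicate point will be the cross-term vanishing. The identity $\alpha_{ij}^\sigma=\alpha_{ji}$ follows formally from the symmetry of the forms, but placing $\op{tr}_H^G(\alpha_{12})+\op{tr}_H^G(\alpha_{21})$ in $J(E_G(M))$ genuinely requires both characteristic $2$ and the locality of $E_G(M)$; it is precisely here that the indecomposability of $M$ enters and allows the minimization argument to close.
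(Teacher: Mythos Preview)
Your proof is correct and follows essentially the same route as the paper: minimise $\dim L$, use Lemma~\ref{L:M->Sum} to get orthogonal indecomposability, invoke Lemma~\ref{L:GowWillems} for the dual-pair decomposition, expand $\theta$ into four $\op{tr}_H^G(\alpha_{ij})$, kill the cross terms with $x+x^\sigma\in J(E_G(M))$, and contradict minimality via Lemma~\ref{L:Higman}. Your justification that $\dim(\alpha_{ii}M)\le\dim(L_i)$ via the factorisation through $e_i\phi$ is a detail the paper leaves implicit. One small imprecision: Lemma~\ref{L:GowWillems} does not guarantee that $B_L$ is a \emph{perfect pairing} between $L_1$ and $L_2$ (i.e.\ that each $L_i$ is totally isotropic), only that $B_L$ is degenerate on each $L_i$ and non-degenerate on their sum; but you never actually use the stronger claim, so the argument stands.
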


\section{Symmetric vertices}\label{S:SymmetricVertices}

For the rest of the paper $k$ is an algebraically closed field of characteristic $2$.

\subsection{Projective modules and involutions}\label{SS:Projective}

We interpret some of the results of \cite{GowWillemsPIMs} and \cite{MurrayPFS} in terms of induction of forms. The structural map of the left regular module $kG$ is $\ell:kG\rightarrow E(kG)$, where $\ell(x)y:=xy$, for $x,y\in kG$. We also consider the $k$-algebra isomorphism $r:kG^{op}\rightarrow E_G(kG)$, defined by $r(x)y:=yx$. Also $kG$ is a left $k(G\times G)$-module via $(g_1,g_2)x:=g_1xg_2^{-1}$, for $g_1,g_2\in G$ and $x\in kG$.

Recall that $B_1$ is the $G\times G$-form on $kG$ such that for all $g_1,g_2\in G$ in $kG$
$$
B_1(g_1,g_2)=\left\{\begin{array}{ll}1_k,&\quad\mbox{if $g_1=g_2$,}\\0_k,&\quad\mbox{if $g_1\ne g_2$.}\end{array}\right.
$$
We will use $\sigma$ to denote the adjoint of $B_1$ on $E(kG)$. Then $\ell(x)^\sigma=\ell(x^o)$ and $r(x)^\sigma=r(x^o)$, for all $x\in kG$, where $^o$ is the contragredient map on $kG$.

Recall from \ref{SS:Endomorphisms} that the $G$-invariant bilinear forms on $kG$ are $B_{r(a)}$, for $a\in kG$. We simplify $B_{r(a)}$ to $B_a$. So ${B_a(x,y)=B_1(xa,y)}$, for all ${x,y\in kG}$. Then $B_a$ is non-degenerate if\/ $a$ is a unit in $kG$, symmetric if\/ $a=a^o$ and symplectic if\/ $a=a^o$ and $B_1(a,1)=0$. As an important example, $B_t$ is a non-degenerate symplectic form, for each involution $t\in G$.

Now suppose that $a$ is a symmetric unit in $kG$ cf. \cite{Lee}. So $a=a^o$. Set $\sigma_a$ as the adjoint of $B_a$ on $E(M)$. Then it is easy to check that
\begin{equation}\label{E:left_right}
\ell(x)^{\sigma_a}=\ell(x^o)\quad\mbox{and}\quad r(x)^{\sigma_a}=r(ax^oa^{-1}),\quad\mbox{for all $x\in kG$.}
\end{equation}
The fact that the adjoint has a different effect on the isomorphic subrings $\ell(kG)$ and $r(kG)$ of $E(kG)$ has been neglected in previous works such as \cite{LandrockManz} and \cite{GowWillemsPIMs}.

Let $P$ be a self-dual principal indecomposable $kG$-module. If $P$ is the projective cover of the trivial module then it affords a diagonalizable form and at least one symplectic form. Otherwise $P$ has symmetric type if and only if it has symplectic type. In fact, each non-degenerate $G$-invariant symplectic form on $P$ is the polarization of a $G$-invariant quadratic form. For full details, see \cite{GowWillemsPIMs}.

Our first result includes {\em Fong's Lemma}:

\begin{Lemma}\label{L:Fong}
Let $M$ be a non-trivial self-dual irreducible $kG$-module. Then $M$ affords a unique, up to scalars, non-degenerate $G$-invariant symplectic bilinear form.

The non-symplectic form $B_1$ is degenerate on each direct summand of\/ $kG$ which is isomorphic to $P(M)$.
\begin{proof}
There is a point $\epsilon$ of $E_G(M)$ such that $P(M)\cong kGe$ for $e\in\epsilon$. Then $\pi_\epsilon:kG\rightarrow E(M)$ is surjective. As $P(M)\cong P(M)^*$, we have $\op{ker}(\epsilon)^\sigma=\op{ker}(\epsilon)$. So $\sigma$ induces an involution, also denoted $\sigma$, on $E(M)$. Let $B$ be the non-degenerate symmetric form on $M$ whose adjoint is $\sigma$, as given by Lemma \ref{L:sigma=B}. Then $B$ is $G$-invariant as $B_1$ is $G$-invariant, symplectic as $M$ has no trivial submodule, and unique up to multiplication by a non-zero scalar, as $E_G(M)\cong k$.

We claim that $B_1$ is degenerate on $kGe$, for all $e\in\epsilon$. Otherwise Lemma \ref{L:projection} shows that $r(e)^\sigma=r(e)$ for some $e\in\epsilon$. But then $\ell(e)^\sigma=\ell(e)$. So $\pi_\epsilon(e)$ is an $\sigma$-invariant primitive idempotent in $E(M)$. Again by Lemma \ref{L:projection}, $B$ is non-degenerate on the 1-dimensional space $\pi_\epsilon(e)M$. This contradicts the fact that $B$ is symplectic and our claim follows.
\end{proof}
\end{Lemma}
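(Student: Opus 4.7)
The plan is to apply the involutary-algebra machinery of Section~\ref{S:General} to the regular module: transfer the contragredient involution $^o$ on $kG$ to an involution on $E(M)$, then invoke Lemmas~\ref{L:sigma=B}, \ref{L:projection} and \ref{L:diagonal=trivial}.

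For the first assertion, let $\pi : kG \to E(M)$ be the structural representation. Because $M \cong M^*$ its kernel is $^o$-stable, so $^o$ descends to a $k$-algebra involution $\bar\sigma$ on $E(M)$. Lemma~\ref{L:sigma=B}(i) then produces a non-degenerate symmetric form $B$ on $M$ with adjoint $\bar\sigma$, unique up to a scalar. Checking the adjoint identity on $\pi(g)$ for $g \in G$ shows that $B$ is $G$-invariant, and since $M$ is non-trivial and irreducible, Lemma~\ref{L:diagonal=trivial} forbids $B$ from being diagonalizable, so $B$ is symplectic. Uniqueness up to a scalar is immediate from $E_G(M) = k$.

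For the second assertion, I would argue by contradiction. Suppose $B_1$ were non-degenerate on some direct summand $L = kGe \cong P(M)$. Lemma~\ref{L:projection} provides a $\sigma$-fixed orthogonal projection onto $L$ in $E_G(kG)$; via $r : kG^{op} \to E_G(kG)$ together with the identity $r(x)^\sigma = r(x^o)$ from~(\ref{E:left_right}), this projection takes the form $r(e')$ for a $^o$-fixed primitive idempotent $e' \in kG$ with $kGe' = L$. The parallel identity $\ell(x)^\sigma = \ell(x^o)$ then makes $\pi(e')$ a $\bar\sigma$-fixed idempotent of $E(M)$; it is primitive because its image $\pi(e')M$ coincides with $e'kGe'/J(e'kGe')$, which is one-dimensional as $M$ is absolutely simple. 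Applying Lemma~\ref{L:projection} once more, $B$ would be non-degenerate on a one-dimensional subspace of $M$, contradicting the symplectic nature of $B$.

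The main subtlety is the interplay between the left- and right-multiplication copies of $kG$ inside $E(kG)$. It is essential that $\sigma$ restricts to conjugation by $^o$ on both $\ell(kG)$ and $r(kG)$: only this matching symmetry lets us convert a $\sigma$-fixed right-multiplication projection onto $L$ into a $^o$-fixed idempotent in $kG$, and thence into a self-adjoint idempotent of $E(M)$. Once that symmetry is in hand the contradiction is immediate, and everything else reduces to routine applications of the projection and diagonalizable-implies-trivial lemmas.
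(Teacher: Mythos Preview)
Your proof is correct and follows essentially the same route as the paper: transport $^o$ through the structural map $kG\to E(M)$ to obtain an involution, invoke Lemma~\ref{L:sigma=B} for the symplectic form (the paper appeals directly to the absence of a trivial submodule rather than to Lemma~\ref{L:diagonal=trivial}, but this is the same point), and for the second assertion derive an $^o$-fixed primitive idempotent from Lemma~\ref{L:projection} via the identities $r(x)^\sigma=r(x^o)$ and $\ell(x)^\sigma=\ell(x^o)$, exactly as the paper does. Your closing remark about the left/right symmetry is precisely the subtlety the paper flags in~\eqref{E:left_right}.
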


Our next result includes a proof of \cite[(1.6)]{GowWillemsPIMs}.

\begin{Lemma}\label{L:P(k)}
$B_1$ restricts to a diagonalizable $G$-form on each direct summand of\/ $kG$ which is isomorphic to $P(k_G)$. Also $\dim P(k_G)/|G|_2$ is odd.
\begin{proof}
We may write $kG\cong\sum P(S)^{\dim S}$ where $S$ ranges over all irreducible $kG$-modules. As $P(S)\cong P(k_G)^*$ if and only if $S\cong k_G$, Lemma \ref{L:GowWillems} implies that $B_1$ is non-degenerate on each direct summand of\/ $kG$ which is isomorphic to $P(k_G)$. Now $B_1$ is symplectic on each direct summand of $kG$ which is not isomorphic to $P(k_G)$. As $B_1$ is not symplectic on $kG$, it is not symplectic on any direct summand isomorphic to $P(k_G)$.

Now $|G|_{2'}=\sum({\dim P(S)}/{|G|_2})\dim S$, where each ${\dim P(S)}/{|G|_2}$ is an integer. If\/ $S\cong S^*$ and $S\not\cong k_G$ then $\dim S$ is even, by Fong's Lemma. If\/ $S\not\cong S^*$, then $S$ and $S^*$ contribute equally to the sum. Thus $|G|_{2'}\equiv{\dim P(k_G)}/{|G|_2}$ (mod $2$).
\end{proof}
\end{Lemma}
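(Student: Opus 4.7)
The plan is to decompose $kG$ into $B_1$-orthogonal non-degenerate pieces using Lemma \ref{L:GowWillems}, single out the $P(k_G)$-summand by its multiplicity-one position, and then play the diagonalizable-versus-symplectic dichotomy against the visible fact that $B_1(1,1) = 1_k$. The parity statement will then drop out of a modulo-$2$ count.

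First I would write $kG \cong \bigoplus_S P(S)^{\dim S}$ as $S$ ranges over the isomorphism classes of irreducible $kG$-modules. Since taking duals commutes with projective covers, $P(k_G)^* \cong P(k_G)$, and $\dim k_G = 1$ forces the multiplicity of $P(k_G)$ in $kG$ to be exactly one. Applying Lemma \ref{L:GowWillems} to this decomposition, clause (ii) is unavailable for the unique $P(k_G)$-summand (there is no second copy of $P(k_G)$ to pair with it), so clause (i) gives that $B_1$ is non-degenerate on that summand.

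For the diagonalizable claim I would argue by contradiction. Inductively peel off $B_1$-non-degenerate pieces from $kG$ and pass each time to the orthogonal complement: the unique $P(k_G)$-summand comes off first as a self-orthogonal piece, and by Krull--Schmidt every further piece is either a pair $P(S) \dot+ P(S)$ with $S \cong S^*$ non-trivial (forced since Lemma \ref{L:Fong} shows $B_1$ is degenerate on each such isolated $P(S)$), or a pair $P(S) \dot+ P(S^*)$ with $S \not\cong S^*$; in both cases $S \not\cong k_G$. None of these later pieces admits a trivial quotient, so by Lemma \ref{L:diagonal=trivial} the non-degenerate form on each is symplectic. Assuming $B_1$ were also symplectic on $P(k_G)$ would make $B_1$ symplectic on the whole orthogonal sum $kG$, contradicting $B_1(1,1) = 1_k$. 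Thus $B_1$ restricts to a diagonalizable form on $P(k_G)$.

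For the parity, dividing $|G| = \sum_S (\dim S)(\dim P(S))$ by $|G|_2$ gives
\[
|G|_{2'} \;=\; \sum_S (\dim S)\,\frac{\dim P(S)}{|G|_2},
\]
in which each quotient is an integer. For $S \not\cong S^*$ the terms for $S$ and $S^*$ are equal and so contribute $0 \pmod 2$. For non-trivial self-dual $S$, Lemma \ref{L:Fong} supplies a non-degenerate symplectic form on $S$, hence $\dim S$ is even, and again the contribution vanishes mod $2$. Only the $S = k_G$ term survives, yielding $\dim P(k_G)/|G|_2 \equiv |G|_{2'} \equiv 1 \pmod 2$.

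The main obstacle is the iterative orthogonal refinement in the middle step: Lemma \ref{L:GowWillems} only guarantees non-degeneracy piece by piece, and one needs the complement-splitting together with Krull--Schmidt to strip $kG$ cleanly into the single $P(k_G)$ block plus symplectic paired blocks, after which the contradiction with $B_1(1,1) = 1_k$ becomes transparent.
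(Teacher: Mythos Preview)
Your argument is correct and follows the paper's line exactly: non-degeneracy on the $P(k_G)$-summand via Lemma~\ref{L:GowWillems} and multiplicity one, symplecticity of $B_1$ on every other piece, then the contradiction with $B_1(1,1)=1_k$; the parity count is identical.

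The ``main obstacle'' you flag is self-imposed. You do not need an orthogonal decomposition, because for any symmetric bilinear form one has $B(m_1+m_2,m_1+m_2)=B(m_1,m_1)+2B(m_1,m_2)+B(m_2,m_2)=B(m_1,m_1)+B(m_2,m_2)$ in characteristic~$2$; hence ``symplectic on each summand'' implies ``symplectic on the direct sum'' regardless of orthogonality. The paper therefore skips the iterative peeling entirely: any indecomposable summand $P(S)$ with $S\not\cong k_G$ has no trivial quotient, so by (the contrapositive of) Lemma~\ref{L:diagonal=trivial} the restriction of $B_1$ to it is already symplectic, degenerate or not. Summing over all such summands makes $B_1$ symplectic on a complement of $P(k_G)$ in $kG$, and the contradiction with $B_1(1,1)=1_k$ is immediate.
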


Recall that $E(kG)$ is a $G$-algebra, as $kG$ is a $kG$-module. So by definition ${^g}f(z):=gfg^{-1}(z)$, for all $g\in G,f\in E(kG)$ and $z\in kG$. To describe the relative trace maps $\op{tr}_H^G:E_H(kG)\rightarrow E_G(kG)$, we first need a description of $E(kG)$. We use $B_1$ to identify $kG\otimes kG$ with $E(kG)$: $x\otimes y$ is the rank-1 endomorphism
\begin{equation}\label{E:E(kG)}
(x\otimes y)(z)=B_1(y,z)x,\quad\mbox{for $z\in kG$.}
\end{equation}
Then the structure of the involutary $G$-algebra $(E(kG),\sigma)$ is given by
\begin{equation}\label{E:sigma}
 (x\otimes y)^\sigma=y\otimes x,\quad\mbox{and}\quad{^g\!}(x\otimes y)=gx\otimes gy,\quad\mbox{for $g\in G$.}
\end{equation}
Using this we see that
$$
{\op{tr}}_1^G(x\otimes y)=r(y^ox),\quad\mbox{for all $x,y\in kG$.}
$$

It is useful to list the elements of\/ $G$ as
$$
1,\,t_1,\dots,t_m,
\begin{array}{llll}
    &g_1,&\dots,&g_n\\
    &g_1^{-1},&\dots,&g_n^{-1}
\end{array}
$$
where each $t_i$ is an involution.

\begin{Lemma}
A basis for the $\sigma$-invariant elements in $E_G(kG)$ is
$$
r(1),r(t_1),\dots r(t_m),r(g_1+g_1^{-1}),\dots,r(g_n+g_n^{-1}).
$$
Of these $r(1)$ and $r(g_j+g_j^{-1})$ are $(1,\sigma)$-projective, while $r(t_i)$ is $(H,\sigma)$-projective, for $H\leq G$, if and only if\/ ${^g}t_i\in H$, for some $g\in G$.
\begin{proof}
The first statement follows from $kG^{op}\cong E_G(kG)$ and \eqref{E:left_right}. Let $g\in G$. Then $1\otimes1$ and $g\otimes1+1\otimes g$ are $\sigma$-invariant and
$$
{\op{tr}}_1^G(1\otimes1)=r(1),\quad{\op{tr}}_1^G(g\otimes1+1\otimes g)=r(g+g^{-1}).
$$
So $r(1)$ and $r(g+g^{-1})$, for $g\ne g^{-1}$, are $(1,\sigma)$-projective.

Let $t=t_i$. Then $t\otimes1+1\otimes t\in E_{\langle t\rangle}(M)$ is $\sigma$-invariant and
$$
{\op{tr}}_{\langle t\rangle}^G(t\otimes1+1\otimes t)=r(t).
$$
So $r(t)$ is $(\langle t\rangle,\sigma)$-projective, if\/ $t\ne1$.

Let $H$ be a subgroup of\/ $G$. Then the endomorphisms ${\op{tr}}_1^H(a\otimes b)$ span $E_H(M)$, as $a,b$ range over all elements of\/ $G$. Now
$$
{\op{tr}}_1^H(a\otimes b)(g)=
\left\{
\begin{array}{ll}
gb^{-1}a,&\quad\mbox{if\/ $g\in Hb$.}\\
0,&\quad\mbox{if\/ $g\in G\backslash Hb$.}
\end{array}
\right.
$$
So ${\op{tr}}_1^H(a\otimes b)$ is $\sigma$-invariant if and only if\/ $Ha=Hb$ and $b^{-1}a=a^{-1}b$ i.e. if and only if\/ $t:=b^{-1}a$ is an involution such that ${^b}t\in H$. The last statement of the lemma follows from this.
\end{proof}
\end{Lemma}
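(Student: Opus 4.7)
The basis assertion is immediate from \eqref{E:left_right}: specialising to $a=1$, the $k$-algebra isomorphism $r:kG^{op}\to E_G(kG)$ intertwines $\sigma$ with the contragredient map $^o$, so the $\sigma$-invariants in $E_G(kG)$ correspond to $o$-symmetric elements of $kG$, for which the displayed list is a basis (pair each $g\in G$ with $g^{-1}$: self-inverse elements contribute $1$ and the $t_i$, while pairs $\{g,g^{-1}\}$ with $g\ne g^{-1}$ contribute the $g_j+g_j^{-1}$).

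For the $(1,\sigma)$-projectivity of $r(1)$ and $r(g_j+g_j^{-1})$, the elements $1\otimes 1$ and $g\otimes 1+1\otimes g$ of $E(kG)$ are $\sigma$-invariant by \eqref{E:sigma}, and the formula $\op{tr}_1^G(x\otimes y)=r(y^ox)$ produces the required traces directly. For $r(t_i)$ with $t=t_i$, I would take $\alpha:=t\otimes1+1\otimes t$, which is both $\sigma$-invariant and $\langle t\rangle$-invariant by \eqref{E:sigma} (using $t^2=1$); evaluating $\op{tr}_{\langle t\rangle}^G(\alpha)$ on an arbitrary $z\in G$ via \eqref{E:E(kG)} and summing over a left transversal of $\langle t\rangle$ shows that only the coset $z\langle t\rangle$ contributes a single term, namely $zt=r(t)(z)$. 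So $r(t)$ is $(\langle t\rangle,\sigma)$-projective, and the remark following Definition \ref{D:symmetricProjectivity} extends this to every $H$ containing some ${^g\!}t$.

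The main obstacle is the converse: showing $r(t_i)$ is \emph{not} $(H,\sigma)$-projective when no $G$-conjugate of $t_i$ lies in $H$, because relative traces interact badly with $\sigma$. My plan is to diagonalise $\op{tr}_H^G$ on the $\sigma$-invariants by passing to $H$-orbits. Since left multiplication of $H$ on $G$ is free, the diagonal $H$-action on $G\times G$ has trivial stabilisers, so the orbit sums form a $k$-basis of $E_H(kG)=(kG\otimes kG)^H$, each equal to some $\op{tr}_1^H(a\otimes b)$. The involution $\sigma$ permutes these orbits via $(a,b)\leftrightarrow(b,a)$, so the $\sigma$-fixed elements of $E_H(kG)$ are spanned by (i) self-dual orbits and (ii) sums of swap-paired pairs of distinct orbits. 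Using $\op{tr}_H^G\circ\op{tr}_1^H=\op{tr}_1^G$ together with $\op{tr}_1^G(x\otimes y)=r(y^ox)$: a self-dual orbit with representative $(a,b)$ maps to $r(b^{-1}a)$, where self-duality forces $h:=ba^{-1}\in H$ to be an involution (possibly $1$) and $b^{-1}a=b^{-1}hb$ is a $G$-conjugate of $h$; a swap-paired pair of distinct orbits contributes $r(y+y^{-1})$ with $y=b^{-1}a$, which vanishes when $y^2=1$ (characteristic $2$) and otherwise is a basis element $r(g_j+g_j^{-1})$ with $g_j\ne g_j^{-1}$.

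Expanding $\op{tr}_H^G(\beta)$ for $\beta\in E_H(kG)^\sigma$ in the basis from the first paragraph, the coefficient at $r(t_i)$ is therefore a sum of coefficients of self-dual orbits whose representative satisfies $b^{-1}a=t_i$. Any such orbit exhibits $t_i$ as $G$-conjugate to an involution in $H$; so if no ${^g\!}t_i$ lies in $H$ this coefficient is zero, ruling out $r(t_i)=\op{tr}_H^G(\beta)$ and completing the proof.
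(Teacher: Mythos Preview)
Your proposal is correct and follows essentially the same route as the paper: both use the basis of $H$-orbit sums $\op{tr}_1^H(a\otimes b)$ for $E_H(kG)$, identify the $\sigma$-action on them via \eqref{E:sigma}, and read off which $r(t_i)$ can appear in a trace from $H$. Your treatment of the converse is in fact a bit more explicit than the paper's, which simply asserts ``the last statement follows'' after characterising the $\sigma$-invariant orbit sums; you spell out that the swap-paired (non-self-dual) pairs contribute only $r(g_j+g_j^{-1})$ or $0$ under $\op{tr}_H^G$, so the coefficient of $r(t_i)$ is supported entirely on self-dual orbits, each of which exhibits $t_i$ as conjugate to an involution of $H$.
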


\begin{Lemma}
Let $H\leq G$ and let $a$ be a unit in $kH$. Then
$$
{\op{Ind}_H^G(kH,B_a)\cong(kG,B_a)}.
$$
In particular $(kG,B_t)\cong\op{Ind}_{\langle t\rangle}^G(k\langle t\rangle,B_t)$, for all $t\in G$ with $t^2=1$.
\begin{proof}
Let $r_H(a)$ be the endomorphism $x\rightarrow xa$ of\/ $kH$. Then $r_H(a)$ extends to a $kH$-endomorphism of\/ $kG$ (acting as $0$ on $k(G\backslash H)$) and ${\op{tr}}_H^G(r_H(a))=r(a)$. The Lemma is a consequence of this fact. 
\end{proof}
\end{Lemma}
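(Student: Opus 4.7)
The plan is to construct an explicit isometry directly, bypassing the symmetric version of Higman's criterion. Define $\phi: kG \to \op{Ind}_H^G(kH)$ on the $k$-basis $G$ of $kG$ by $\phi(g) := g \otimes 1$, extended $k$-linearly. This is a $kG$-homomorphism since $\phi(g_1 g_2) = g_1 g_2 \otimes 1 = g_1\phi(g_2)$, and it is bijective because under the canonical identification $\op{Ind}_H^G(kH) \cong kG$ sending $g \otimes h \mapsto gh$, the map $\phi$ becomes the identity.

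To verify that $\phi$ is an isometry from $(kG, B_a)$ to $\op{Ind}_H^G(kH, B_a)$, I would compare both sides on pairs of basis elements $g_1, g_2 \in G$. If $g_1 H = g_2 H$, the definition of the induced form gives $B_a^G(g_1 \otimes 1, g_2 \otimes 1) = B_a(g_2^{-1} g_1, 1) = B_1(g_2^{-1} g_1 a, 1)$, and by the $G$-invariance of $B_1$ this equals $B_1(g_1 a, g_2) = B_a(g_1, g_2)$. If instead $g_1 H \ne g_2 H$, the induced form vanishes by definition, and $B_a(g_1, g_2) = B_1(g_1 a, g_2)$ also vanishes, because $g_1 a \in g_1\cdot kH = k(g_1 H)$ has support disjoint from $g_2 \in g_2 H$. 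The second assertion is then the special case $H = \langle t\rangle$, $a = t$.

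This is essentially a routine verification; the only subtle point is keeping the left-coset convention in the definition of $B_L^G$ straight and handling the off-diagonal case by a support argument. An alternative route, closer to the hint supplied in the paper, is to take $\alpha = r_H(a) \in E_H(kG)$ to be the extension of right multiplication by $a$ on $kH$ by zero on $k(G\setminus H)$, and then to invoke Lemma \ref{L:Higman}. In that route, one verifies $\op{tr}_H^G(r_H(a)) = r(a)$ by summing ${}^{g_i}r_H(a)$ over a set of left coset representatives $\{g_i\}$ for $G/H$, and $r_H(a)^\sigma = r_H(a^o)$ by using that the $B_1$-orthogonal projection $kG \to kH$ is self-adjoint. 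The main obstacle on this alternative path is the bookkeeping step of matching the form $\hat{B}_{r_H(a)}$ on $r_H(a)\cdot kG = kH$ supplied by Lemma \ref{L:PerfectGPairing} with $B_a$ itself: a direct computation gives $\hat{B}_{r_H(a)} = B_{a^{-1}}$, and one then notes that the $kH$-linear map $h \mapsto ha$ provides an isometry $(kH, B_a) \cong (kH, B_{a^{-1}})$.
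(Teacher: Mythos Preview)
Your direct construction of the isometry $\phi(g)=g\otimes 1$ is correct and complete. The verification splits cleanly into the two coset cases, and the support argument for $g_1H\ne g_2H$ is exactly right: since $a\in kH$, the element $g_1a$ lies in the span of $g_1H$, which is $B_1$-orthogonal to $g_2$.

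This is a genuinely different route from the paper's. The paper argues via Lemma~\ref{L:Higman}: one extends right multiplication by $a$ on $kH$ to $\alpha=r_H(a)\in E_H(kG)$ by zero on the other cosets, checks $\op{tr}_H^G(\alpha)=r(a)$, and then Lemma~\ref{L:Higman} yields an isometry $(kG,B_a)\to\op{Ind}_H^G(\alpha\,kG,\hat B_\alpha)$. Your explicit isometry bypasses that machinery entirely, works for any bilinear form $B_a$ (not just symmetric ones), and makes the dimension count and bijectivity transparent. Moreover, you have correctly identified a point the paper's terse proof leaves implicit: the form $\hat B_{r_H(a)}$ produced by Lemma~\ref{L:PerfectGPairing} is $B_{a^{-1}}$ rather than $B_a$, and one needs the further isometry $h\mapsto ha$ from $(kH,B_a)$ to $(kH,B_{a^{-1}})$ to close the loop. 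Your direct approach avoids this bookkeeping altogether, which is a real advantage here.
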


\begin{Lemma}\label{L:conjugate_involutions}
Two involutions $s,t\in G$ are $G$-conjugate if and only if\/ $(k\langle s\rangle,B_s)$ is a component of\/ $\op{Res}_{\langle s\rangle}^G(kG,B_t)$.
\begin{proof}
It is clear that there is an $\langle s\rangle$-isometry ${(k\langle s\rangle\!,B_s)\!\rightarrow\!\op{Res}_{\langle s\rangle}^G(kG\!,B_t)}$ if and only if\/ ${B_t(x,sx)\ne0}$, for some $x\in kG$. If\/ ${x=\sum_{g\in G}x_gg}$, with $x_g\in k$, then
$$
B_t(x,sx)=B_1(xt,sx)=\sum_{g\in G}x_gx_{sgt}=\sum_{g\in G,g=sgt}x_g^2,
$$
using $x_{g}x_{sgt}+x_{sgt}x_{g}=0$. So if\/ $B_t(x,sx)\ne0$ then $g=sgt$, for some $g\in G$. In that case $s=gtg^{-1}$ is conjugate to $t$.

Conversely, if\/ $s=gtg^{-1}$, then $B_t(g,sg)=1$.
\end{proof}
\end{Lemma}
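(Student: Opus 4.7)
The plan is to reduce the statement to a direct computation of $B_t(x,sx)$ for $x\in kG$, and then observe that in characteristic $2$ only the coefficients indexed by elements conjugating $t$ to $s$ survive.

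First I would describe the set of candidate isometries. Any $k\langle s\rangle$-linear map $\phi:k\langle s\rangle\to kG$ is determined by $x:=\phi(1)$, with $\phi(s)=sx$ forced. Since $B_t$ is symplectic on $kG$ and $G$-invariant (as noted in \ref{SS:Projective}, recalling that $B_1(t,1)=0$ because $t\ne1$), the diagonal values $B_t(x,x)$ and $B_t(sx,sx)$ vanish automatically. Thus $\phi$ is an isometry onto its image iff $B_t(x,sx)=B_s(1,s)=B_1(s,s)=1$. Injectivity is then automatic: if $sx=\lambda x$ then $s^2=1$ forces $\lambda=1$ in characteristic $2$, so $B_t(x,sx)=B_t(x,x)=0\ne1$. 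Existence of such $\phi$ is therefore equivalent to the existence of some $x\in kG$ with $B_t(x,sx)\ne0$, after rescaling $x$ by $\lambda\in k^\times$ chosen so that $\lambda^2 B_t(x,sx)=1$ (available since $k$ is algebraically closed).

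Next I would carry out the key computation. Writing $x=\sum_{g\in G}x_g g$, expand
\[
B_t(x,sx)=B_1(xt,sx)=\sum_{g,h\in G}x_g x_h\,B_1(gt,sh)=\sum_{g\in G}x_g x_{sgt},
\]
using $s^{-1}=s$. The involution $g\mapsto sgt$ on $G$ pairs off any $g$ with $g\ne sgt$; for such a pair the contribution $x_gx_{sgt}+x_{sgt}x_g$ is zero in characteristic $2$. Hence $B_t(x,sx)=\sum_{g:\,g=sgt}x_g^2$, which can be nonzero for some $x$ iff the set $\{g\in G:g=sgt\}$ is nonempty, i.e.\ iff $s=gtg^{-1}$ for some $g\in G$.

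For the converse direction I would exhibit an explicit witness: if $s=gtg^{-1}$ then $g=sgt$, so taking $x=g$ yields $B_t(g,sg)=1\ne0$ and the associated $\phi$ embeds $(k\langle s\rangle,B_s)$ isometrically into $\op{Res}_{\langle s\rangle}^G(kG,B_t)$, making $(k\langle s\rangle,B_s)$ a component by the remark after Lemma \ref{L:GowWillems}. There is no real obstacle here; the only subtlety is the characteristic-$2$ pairing argument that kills all non-fixed terms in the sum $\sum_g x_g x_{sgt}$, which is precisely what forces the support of a witness $x$ to meet the conjugating set $\{g:gtg^{-1}=s\}$.
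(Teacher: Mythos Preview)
Your proposal is correct and follows essentially the same approach as the paper: both reduce to the computation $B_t(x,sx)=\sum_{g}x_gx_{sgt}=\sum_{g=sgt}x_g^2$ via the characteristic-$2$ pairing cancellation, and both exhibit $x=g$ as the explicit witness when $s=gtg^{-1}$. You simply spell out in more detail the step the paper calls ``clear,'' namely why the existence of an $\langle s\rangle$-isometry is equivalent to $B_t(x,sx)\ne0$ for some $x$; your treatment of injectivity and rescaling is fine, though the reference to ``the remark after Lemma \ref{L:GowWillems}'' is misplaced --- the relevant definition of component (existence of an isometry) appears earlier in Section \ref{SS:symmetric_symplectic_foms}.
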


Our next result is an elaboration of parts of \cite[Section 3]{GowWillemsPIMs}.

\begin{Lemma}\label{L:BtkGe}
Let $e$ be a primitive idempotent in $kG$ and let $\hat B$ be a non-degenerate $G$-invariant symplectic form on $kGe$. Then there is an involution $t\in G$ such that $B_t$ is non-degenerate on $kGe$ and $(k\langle t\rangle,B_t)$ is a component of\/ $\op{Res}_{\langle t\rangle}^G(kGe,\hat B)$.
\begin{proof}
By Lemma \ref{L:restrictions} there is $a\in ekGe^\sigma$ so that $\hat B(xe,ye)=B_a(x,y)$, for all $x,y\in kG$. Then $a=a^\sigma$ and $B_1(a,1)=0$, as $\hat B$ is symplectic. Write $a=\sum_{i=1}^m\alpha_it_i+\sum_{j=1}^n\beta_j(g_j+g_j^{-1})$, with $\alpha_i,\beta_j\in k$. Now $E_G(kGe)$ is a local ring. So each $B_{\beta_j(g_j+g_j^{-1})}$ is degenerate on $kGe$. It follows that $B_{\alpha_it_i}$ is non-degenerate on $kGe$, for some $i$. Set $t=t_i$. Then $B_t$ is non-degenerate on $kGe$ and $B_a(e,te)=\alpha_i\ne0$. So ${ke+kte}$ is a $B_a$-direct summand of\/ $\op{Res}_{\langle t\rangle}^G(kGe)$ which is isomorphic to $k\langle t\rangle$. We conclude that $(k\langle t\rangle,b_t)$ is a component of\/ $\op{Res}_{\langle t\rangle}^G(kGe,\hat B)$.
\end{proof}
\end{Lemma}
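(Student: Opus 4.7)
The plan is to use Lemma~\ref{L:restrictions} to turn $\hat B$ into a form of type $B_a$, decompose $a$ along the $\sigma$-invariant basis of $kG$, exploit the locality of $E_G(kGe)$ in characteristic $2$ to force the non-degeneracy onto a single involution term, and finally extract $(k\langle t\rangle,B_t)$ by a direct computation on $ke+kte$.

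First I would apply Lemma~\ref{L:restrictions} to the idempotent $r(e)\in E_G(kG)$ (whose image is the summand $kGe$) to produce $a\in ekGe^o$ with $\hat B(xe,ye)=B_a(xe,ye)$ for all $x,y\in kG$. Symmetry of $\hat B$ translates to $a=a^o$, and the symplectic condition $\hat B(e,e)=0$ (using $ae^o=a$) forces the coefficient of $1$ in $a$ to vanish. Expanding in the $\sigma$-fixed basis of $kG$,
\[
a=\sum_{i=1}^m \alpha_i t_i+\sum_{j=1}^n \beta_j(g_j+g_j^{-1}),
\]
with no identity contribution.

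The heart of the argument is to show that each $B_{\beta_j(g_j+g_j^{-1})}$ is degenerate on $kGe$. Restriction to $kGe$ and Lemma~\ref{L:PerfectGPairing} together give a $k$-linear map $\Psi\colon kG\to E_G(kGe)$ which intertwines the contragredient on $kG$ with the adjoint involution $\tau$ of $\hat B$ on $E_G(kGe)$, so $\Psi(g+g^{-1})=\Psi(g)+\Psi(g)^\tau$. Now $E_G(kGe)$ is a local finite-dimensional $k$-algebra because $e$ is primitive, and since $k$ is algebraically closed of characteristic $2$ the residue field is $k$ itself, on which the only $k$-algebra involution is the identity; hence every element of the form $\phi+\phi^\tau$ lies in $J(E_G(kGe))$. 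Therefore $\Psi(\beta_j(g_j+g_j^{-1}))$ is a non-unit for each $j$. Since $\Psi(a)$ is a unit (non-degeneracy of $\hat B$) and a sum of non-units in a local ring is a non-unit, some $\alpha_i\Psi(t_i)$ must itself be a unit; then $\alpha_i\ne 0$ and $B_t$ is non-degenerate on $kGe$, where $t:=t_i$.

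A direct computation using $ae^o=a$ yields $B_a(e,te)=\alpha_i\ne 0$, so $\hat B$ is non-degenerate on $ke+kte$. As $\alpha_i\ne 0$ forces $te\ne e$, this is a $2$-dimensional $k\langle t\rangle$-submodule, isometric to $(k\langle t\rangle,B_t)$ after rescaling one basis vector to absorb $\alpha_i$ (legitimate since $k$ is algebraically closed). Lemma~\ref{L:projection} then splits $ke+kte$ off as a $\hat B$-direct summand, producing $(k\langle t\rangle,B_t)$ as a component of $\op{Res}_{\langle t\rangle}^G(kGe,\hat B)$. The main obstacle is the radical argument in the third paragraph: it rests on verifying that the adjoint involution $\tau$ on $E_G(kGe)$ is $k$-linear and on making the $\tau$-equivariance of $\Psi$ precise, both of which are routine but slightly delicate pieces of bookkeeping through Lemmas~\ref{L:restrictions} and~\ref{L:PerfectGPairing}.
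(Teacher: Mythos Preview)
Your proof is correct and follows essentially the same route as the paper's: use Lemma~\ref{L:restrictions} to write $\hat B=B_a$ with $a=a^o$ and vanishing identity coefficient, expand $a$ along involutions and $g_j+g_j^{-1}$ pairs, use locality of $E_G(kGe)$ to kill the latter, and then exhibit $ke+kte$ as the desired $(k\langle t\rangle,B_t)$-component. Your explicit justification that each $\Psi(g_j+g_j^{-1})=\Psi(g_j)+\Psi(g_j)^\tau$ lies in $J(E_G(kGe))$ (via triviality of the induced involution on the residue field $k$ in characteristic~$2$) spells out what the paper compresses into the single clause ``$E_G(kGe)$ is a local ring'', but otherwise the arguments coincide.
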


We note that $B_t$ is non-degenerate on $kGe$ if and only if $kGe=kGf$ for some primitive idempotent $f\in kG$ with $tft=f^o$, using Lemma \ref{L:projection} and \eqref{E:left_right}. This is \cite[3.4 and 3.5]{GowWillemsPIMs}. Our last result strengthens \cite[6.5]{MurrayPFS}.

\begin{Corollary}\label{C:BtkGe}
Suppose that $B_t$ is non-degenerate on $kGe$ where $t\in G$ and $t^2=1$. Then $(kGe,B_t)$ is a component of\/ $\op{Ind}_{\langle t\rangle}^G\op{Res}_{\langle t\rangle}^G(kGe,B_t)$.

Let $M$ be the irreducible head of\/ $kGe$. Then $\op{Res}_{C_G(t)}^GM$ has diagonalizable type. In particular $k_{C_G(t)}$ is a submodule and a quotient module of\/ $\op{Res}_{C_G(t)}^GM$.
\begin{proof}
If\/ $t=1$, then ${kGe\cong P(k_G)}$, and Lemma \ref{L:P(k)} gives all conclusions. So assume that $t$ is an involution. By Lemma \ref{L:BtkGe} there is an involution $s\in G$ and an isometry $(k\langle s\rangle,B_s)\rightarrow\op{Res}_{\langle s\rangle}^G(kGe,B_t)$. Then $s$ and $t$ are $G$-conjugate, according to Lemma \ref{L:conjugate_involutions}. This proves the first assertion.

We may assume that $M\not\cong k_G$. Let $B$ be the $G$-invariant symplectic form on $M$. Recall that $\pi_\epsilon:kG\rightarrow E(M)$ has kernel ${\mathfrak M}_\epsilon$. Now $\sigma_t$ is an involution on $kG$ which fixes ${\mathfrak M}_\epsilon$. So it induces an involution $\sigma_t$ on $E(M)$. As $\ell(g)^{\sigma_t}=\ell(tg^{-1}t)$, we get $\pi_\epsilon(g)^{\sigma_t}=\pi_\epsilon(tg^{-1}t)$. So the corresponding non-degenerate symmetric bilinear form on $M$ is $\hat B(m_1,m_2):=B(tm_1,m_2)$, for all $m_1,m_2\in M$. Then $\hat B$ is $C_G(t)$-invariant.

Let $f\in kG$ be the $\sigma_t$-invariant primitive idempotent with $kGe=kGf$, as given by Lemma \ref{L:projection}. Then $\pi_\epsilon(f)$ is a $\sigma_t$-invariant primitive idempotent in $E(M)$. It follows that $\hat B$ is a diagonalizable form on $M$. The last assertion now follows from Lemma \ref{L:diagonal=trivial}.
\end{proof}
\end{Corollary}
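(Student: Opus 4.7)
The plan splits on whether $t=1$ or $t$ is an involution. If $t=1$ then $B_1$ restricted to $kGe$ is non-degenerate, which forces $kGe \cong P(k_G)$ by Lemma \ref{L:Fong}, and Lemma \ref{L:P(k)} supplies both assertions at once (noting $M \cong k_G$ and $C_G(1)=G$). So I henceforth assume $t$ is an involution; then $B_t$ is a non-degenerate symplectic $G$-form and $(kGe,B_t)$ is a $B_t$-orthogonal summand of $(kG,B_t)$ by Lemma \ref{L:projection}.

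For the first assertion I would apply Lemma \ref{L:BtkGe} with $\hat B := B_t$ on $kGe$, producing an involution $s\in G$ such that $(k\langle s\rangle, B_s)\mid \op{Res}_{\langle s\rangle}^G(kGe,B_t)$ and hence $(k\langle s\rangle, B_s)\mid\op{Res}_{\langle s\rangle}^G(kG,B_t)$. Lemma \ref{L:conjugate_involutions} then forces $s$ to be $G$-conjugate to $t$, and transporting along a conjugating element yields $(k\langle t\rangle,B_t)\mid\op{Res}_{\langle t\rangle}^G(kGe,B_t)$. Applying $\op{Ind}_{\langle t\rangle}^G$ (which preserves orthogonal components), combined with the identification $\op{Ind}_{\langle t\rangle}^G(k\langle t\rangle,B_t)\cong(kG,B_t)$ established earlier in the subsection (and of which $(kGe,B_t)$ is a component), I conclude that $(kGe,B_t)$ is a component of $\op{Ind}_{\langle t\rangle}^G\op{Res}_{\langle t\rangle}^G(kGe,B_t)$.

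For the second assertion my strategy is to manufacture a $C_G(t)$-invariant \emph{diagonalizable} form on $M$ and finish with Lemma \ref{L:diagonal=trivial}. Let $B$ be the unique (up to scalar) $G$-invariant symplectic form on $M$ from Fong's Lemma, and define $\hat B(m_1,m_2):=B(tm_1,m_2)$; this is symmetric, non-degenerate, and $C_G(t)$-invariant, with adjoint on $E(M)$ given by the inner twist $\pi_\epsilon(g)\mapsto\pi_\epsilon(tg^{-1}t)$ of the contragredient. This map is well-defined on $E(M)$ because $\mathfrak{M}_\epsilon$ is stable under $g\mapsto tg^{-1}t$: conjugation by $t$ is an inner automorphism of $kG$, and $\mathfrak{M}_\epsilon$ is already stable under the contragredient since $M\cong M^*$.

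The crux of the argument, and the step I expect to require the most care, is showing $\hat B$ is not symplectic. Let $\sigma_t$ denote the adjoint of $B_t$ on $E(kG)$. Applying Lemma \ref{L:projection} to the $B_t$-orthogonal summand $kGe$ of $(kG,B_t)$ supplies a $\sigma_t$-invariant primitive idempotent $f\in kG$ with $kGf=kGe$. Then $\pi_\epsilon(f)$ is a $\sigma_t$-invariant primitive idempotent in the split matrix algebra $E(M)$, hence of rank one, and Lemma \ref{L:projection} (this time applied on $M$) says $\hat B$ is non-degenerate on the one-dimensional subspace $\pi_\epsilon(f)M$. Since any symplectic form vanishes on every one-dimensional subspace, $\hat B$ must be diagonalizable. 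Lemma \ref{L:diagonal=trivial} applied to $(\op{Res}_{C_G(t)}^GM,\hat B)$ then exhibits $k_{C_G(t)}$ as both a submodule and a quotient of $\op{Res}_{C_G(t)}^GM$, completing the proof.
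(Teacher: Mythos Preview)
Your proof is correct and follows essentially the same route as the paper's: split on $t=1$, invoke Lemma~\ref{L:BtkGe} and Lemma~\ref{L:conjugate_involutions} for the first assertion, then build the $C_G(t)$-form $\hat B(m_1,m_2)=B(tm_1,m_2)$, push the $\sigma_t$-invariant primitive idempotent $f$ through $\pi_\epsilon$ to witness a non-isotropic line, and finish with Lemma~\ref{L:diagonal=trivial}. You have written out more of the mechanics than the paper does (the conjugation transport from $s$ to $t$, and the chain $(k\langle t\rangle,B_t)\mid\op{Res}\Rightarrow\op{Ind}(k\langle t\rangle,B_t)\cong(kG,B_t)\mid\op{Ind}\op{Res}$), but these are exactly the steps the paper leaves implicit when it says ``this proves the first assertion.''
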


Finally we prove Theorem \ref{T:quadraticPIM} of the introduction. This gives an intriguing new criterion for a principal indecomposable module to be of quadratic type, which depends entirely on the irreducible head of the module.

\begin{Proposition}\label{P:quadraticPIM}
Let $M$ be a non-trivial self-dual irreducible $kG$-module, with symplectic form $B$, and let $t\in G$ be an involution. Then $P(M)$ is a $B_t$-component of\/ $kG$ if and only if\/ $B(tm,m)\ne0$, for some $m\in M$.
\begin{proof}
We proved the `only if' in Corollary \ref{C:BtkGe}. So suppose that there is $m\in M$ such that $B(tm,m)\ne0$. Define $\hat B$ on $M$ as above, with an involution $\sigma_t$ on $E(M)$ which lifts to the involution $\sigma_t$ of $kG$. Now $\hat B$ is non-degenerate on the subspace $km$ of $M$. So by Lemma \ref{L:projection} there is a unique $\sigma_t$-invariant primitive idempotent $\hat f\in E(M)$ such that $\hat f(M)=km$. By Lemma \ref{L:idempotent_lifting}(iii) there is a $\sigma_t$-invariant primitive idempotent $f\in kG$ such that $\pi_\epsilon(f)=\hat f$. In particular $kGf\cong P(M)$. Lemma \ref{L:projection} guarantees that $B_t$ is non-degenerate on $kGf$.
\end{proof}
\end{Proposition}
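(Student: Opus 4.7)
The `only if' direction is already Corollary \ref{C:BtkGe}, so I focus on the converse. Suppose $m\in M$ satisfies $B(tm,m)\ne 0$. My plan is to produce a $\sigma_t$-invariant primitive idempotent $f\in kG$ whose image under $\pi_\epsilon$ is a rank-one orthogonal projector onto $km$; this will force $kGf\cong P(M)$, and a second orthogonal-projection argument, this time in $(E_G(kG),\sigma_t)$, will then yield that $B_t$ is non-degenerate on $kGf$.

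The first step is to introduce the twisted form $\hat B(x,y):=B(tx,y)$ on $M$. Since $B$ is symmetric, non-degenerate and $G$-invariant and $t$ is an involution, $\hat B$ is readily seen to be non-degenerate and symmetric (though only $C_G(t)$-invariant). Its adjoint on $E(M)$ is the involution $\sigma_t$ with $\pi_\epsilon(g)^{\sigma_t}=\pi_\epsilon(tg^{-1}t)$. To see that this is well defined, note that self-duality of $P(M)$ gives $\mathfrak{M}_\epsilon^o=\mathfrak{M}_\epsilon$, while $\sigma_t$ differs from the contragredient $o$ by conjugation by $t$, under which the two-sided ideal $\mathfrak{M}_\epsilon$ is stable; so $\sigma_t$ on $kG$ does descend through $\pi_\epsilon$.

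The hypothesis translates to $\hat B(m,m)\ne 0$, so $\hat B$ is non-degenerate on the line $km\subseteq M$. Applying Lemma \ref{L:projection} inside the involutary algebra $(E(M),\sigma_t)$ yields a unique $\sigma_t$-invariant idempotent $\hat f\in E(M)$ with $\hat f(M)=km$, and $\hat f$ is automatically primitive as $km$ is one-dimensional. Lemma \ref{L:idempotent_lifting}(iii), applied to the surjection $\pi_\epsilon\colon kG\to E(M)$ with its $\sigma_t$-invariant kernel, lifts $\hat f$ to a $\sigma_t$-invariant primitive idempotent $f\in kG$ with $\pi_\epsilon(f)=\hat f$. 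Since $\pi_\epsilon(f)\ne 0$, the simple head of $kGf$ is $M$, so $kGf\cong P(M)$. Now $r(f)\in E_G(kG)$ is an idempotent with image $kGf$, and by \eqref{E:left_right} applied with $a=t$ one has $r(f)^{\sigma_t}=r(tf^ot)=r(f)$, because $f$ is $\sigma_t$-invariant in $kG$. A final application of Lemma \ref{L:projection}, now in $(E(kG),\sigma_t)$, then guarantees that $B_t$ is non-degenerate on $kGf\cong P(M)$, as required.

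The only potentially delicate point is confirming that $\mathfrak{M}_\epsilon$ is $\sigma_t$-invariant so that the twisted involution genuinely passes through $\pi_\epsilon$; once that is in place, the proof is the clean two-stage orthogonal-projection-plus-idempotent-lifting scheme already exploited throughout Section \ref{S:SymmetricVertices}. The conceptual content is that the twist by $t$ converts the alternating form $B$ into a diagonalizable form $\hat B$ on $M$ precisely when some vector fails to be totally isotropic, and a single non-isotropic vector is all one needs to build the required self-adjoint primitive idempotent.
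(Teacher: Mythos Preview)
Your proof is correct and follows exactly the same route as the paper: define the twisted form $\hat B$ with adjoint $\sigma_t$, use Lemma~\ref{L:projection} to get a $\sigma_t$-invariant rank-one idempotent $\hat f\in E(M)$, lift it via Lemma~\ref{L:idempotent_lifting}(iii) to a $\sigma_t$-invariant primitive idempotent $f\in kG$, and then apply Lemma~\ref{L:projection} again to conclude that $B_t$ is non-degenerate on $kGf$. The only difference is that you spell out the verification that $\mathfrak{M}_\epsilon$ is $\sigma_t$-stable and the identity $r(f)^{\sigma_t}=r(tf^ot)=r(f)$ via \eqref{E:left_right}, whereas the paper leaves these implicit in the phrase ``as above'' and the bare citation of Lemma~\ref{L:projection}.
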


\begin{Example}\cite[11.2]{James}
Let $S_n$ be the symmetric group of degree $n\geq2$ and let $\lambda$ be a non-trivial $2$-regular partition of $n$. The Specht module $S^\lambda$ affords a non-zero $S_n$-invariant symplectic bilinear form $B$ such that $D^\lambda:=S^\lambda/(S^\lambda)^\perp$ is an irreducible $kS_n$-module. Let $T$ be a $\lambda$-tableau, so there is a corresponding polytabloid $e_T$ in $S^\lambda$. Now $T$ is a filling of the boxes in a Young diagram for $\lambda$ with the symbols $1,2,\dots,n$. Let $t$ be the involution in $S_n$ which reverses the entries in each row. Then the tabloid labelled by $T$ is the only tabloid common to $te_T$ and $e_T$. So $B(te_T,e_T)=1_k$. It follows that $P(D^\lambda)$ is a $B_t$-component of $kS_n$. Note that if $\lambda$ has $\ell$ non-zero parts then $t$ is a product of $(n-\ell)/2$ commuting transpositions.
\end{Example}

\subsection{Puig correspondence}\label{SS:Indecomposable}

In parts \ref{SS:Indecomposable} through to \ref{SS:Proofs} we take $M$ to be an indecomposable $kG$-module, with Green vertex $V$ and $V$-source $Z$.

We will use the following notation:
\begin{itemize}
 \item $\Omega$ is the point of\/ $E_G(\op{Ind}_V^GZ)$ such that $\omega\op{Ind}_V^GZ\cong M$, for all $\omega\in\Omega$.
 \item ${\mathcal F}:E(M)\rightarrow E(\op{Ind}_V^GZ)$ is the embedding of\/ $G$-algebras induced by $\Omega$.
 \item $\delta$ is the point of\/ $E_V(M)$ such that $dM\cong Z$, for all $d\in\delta$.
 \item $\Delta$ is the point of\/ $E_V(\op{Ind}_V^GZ)$ containing ${\mathcal F}(\delta)$.
\end{itemize}

Let $N_G(V,Z)$ be the stabilizer of\/ $Z$ in $N_G(V)$. Set ${N:=N_G(V,Z)/V}$. Then $E(P_\Delta)$ is an $N$-algebra. So $P_\Delta$ is a module for a twisted group algebra $k_\gamma N$ of\/ $N$ over $k$. Likewise $E(P_\delta)$ is an $N$-algebra and $P_\delta$ is a module for a twisted group algebra $k_{\gamma'}N$ of\/ $N$ over $k$. According to \cite[(26.1)]{Thevenaz}, $P_\Delta$ is the regular $k_\gamma N$-module. Now ${\mathcal F}$ induces an embedding of\/ $N$-algebras $E(P_\delta)\rightarrow E(P_\Delta)$. This in turn induces a group isomorphism between the central extensions of $N$ corresponding to the cocycles $\gamma$ and $\gamma'$. In this way, $P_\delta$ can and will be identified with an indecomposable component of $P_\Delta$, as $k_{\gamma}N$-modules.

\begin{Lemma}\label{L:kgamma}
Suppose that $Z\cong Z^*$ and either $Z$ or $M$ is of symmetric type. Then ${k_\gamma N\cong kN}$. So $P_\Delta$ is the regular $kN$-module.
\begin{proof}
Suppose first that $Z$ has a symmetric $V$-form $B_0$. Let $\sigma_0$ be the adjoint of\/ $B_0^G$ on $E(\op{Ind}_V^GZ)$. As $Z\cong Z^*$, Lemma \ref{L:PerfectGPairing} implies that $\Delta^{\sigma_0}=\Delta$. So ${\mathfrak M}_\Delta^{\sigma_0}={\mathfrak M}_\Delta$ and $\sigma_0$ is an involution on $E_V(\op{Ind}_V^GZ)/{\mathfrak M}_\Delta$. In this way $(E(P_\Delta),\sigma_0)$ is a simple involutary $N$-algebra.

By Lemma \ref{L:PGL(M,sigma)}, there is a symmetric form $B_{\sigma_0}$ on $P_\Delta$ such that the action of\/ $N$ on $E(P_\Delta)$ lifts to a representation $N\rightarrow\op{GL}(P_\Delta,B_{\sigma_0})$. In particular $k_\gamma N\cong kN$ as twisted group algebras and $P_\Delta$ is the regular $kN$-module. 

Conversely suppose that $M$ has a symmetric $G$-form $B$. Let $\sigma$ be the adjoint of\/ $B$ on $E(M)$. Then $\delta^\sigma=\delta$. So ${\mathfrak M}_\delta^\sigma={\mathfrak M}_\delta$ and $\sigma$ induces an involution on $E_V(M)/{\mathfrak M}_\delta$. According to Lemma \ref{L:PGL(M,sigma)}, the action of\/ $N$ on $E(P_\delta)$ lifts to a representation $N\rightarrow\op{GL}(P_\delta,B_{\sigma})$, where $B_\sigma$ is a symmetric form on $P_\delta$. Thus $k_{\gamma'}N\cong kN$ as twisted group algebras. But $k_\gamma N\cong k_{\gamma'}N$. So as before $P_\Delta$ is the regular $kN$-module.
\end{proof}
\end{Lemma}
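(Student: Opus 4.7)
My plan is to handle the two hypotheses in parallel: whichever of $Z$ or $M$ carries a symmetric form, I will descend its adjoint involution to the simple quotient algebra $E(P_\Delta)$ (respectively $E(P_\delta)$) and then invoke Lemma \ref{L:PGL(M,sigma)} to linearize the projective representation of $N$, which trivializes the relevant cocycle.

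First suppose $Z$ has a symmetric $V$-form $B_0$. Inducing gives a $G$-form $B_0^G$ on $\op{Ind}_V^G Z$ with adjoint involution $\sigma_0$ on $E(\op{Ind}_V^G Z)$. The key point is that $\Delta$ is $\sigma_0$-stable: for any $\theta \in \Delta$, Lemma \ref{L:PerfectGPairing} gives $\theta^{\sigma_0}\op{Ind}_V^G Z \cong (\theta\op{Ind}_V^G Z)^*$, and because $Z \cong Z^*$ this module is again isomorphic to $Z$, putting $\theta^{\sigma_0}$ in $\Delta$. Hence ${\mathfrak M}_\Delta$ is $\sigma_0$-stable and $\sigma_0$ descends to an involution on $E(P_\Delta) = E_V(\op{Ind}_V^G Z)/{\mathfrak M}_\Delta$, so $(E(P_\Delta),\sigma_0)$ is a simple involutary $N$-algebra. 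The $N$-action on $E(P_\Delta)$ (by Skolem--Noether a projective representation $N \to \op{PGL}(P_\Delta)$) now factors through $\op{PGL}(P_\Delta,\sigma_0)$, and Lemma \ref{L:PGL(M,sigma)} lifts it uniquely to a genuine representation $N \to \op{GL}(P_\Delta, B_{\sigma_0})$, where $B_{\sigma_0}$ is the form associated to $\sigma_0$ by Lemma \ref{L:sigma=B}. Since the obstruction to such a lift is precisely the class of $\gamma$, the cocycle is trivial and $k_\gamma N \cong kN$; then the already-noted description of $P_\Delta$ as the regular $k_\gamma N$-module becomes the desired conclusion.

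In the other case, suppose instead that $M$ has a symmetric $G$-form $B$ with adjoint $\sigma$. The identical argument (with $M$ in place of $\op{Ind}_V^G Z$ and $\delta$ in place of $\Delta$) yields $\delta^\sigma=\delta$ from Lemma \ref{L:PerfectGPairing} combined with $Z\cong Z^*$, and then Lemma \ref{L:PGL(M,sigma)} linearizes the $N$-action on $E(P_\delta)$ to give $k_{\gamma'} N \cong kN$. Transporting via the identification recorded before the lemma---namely that $\mathcal F$ induces a group isomorphism of the central extensions of $N$ determined by the two cocycles, so $k_\gamma N \cong k_{\gamma'}N$---finishes this case.

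The main subtlety is the $\sigma$-invariance of the chosen point, which is exactly where the self-duality hypothesis $Z\cong Z^*$ is used; once this is secured, the crux of the proof is Lemma \ref{L:PGL(M,sigma)}, whose validity in turn rests squarely on $\op{char}(k)=2$ (so that $\lambda=\lambda^{-1}$ forces $\lambda = 1$ and the lift is unambiguous). Everything else is bookkeeping between the two equivalent descriptions of the central extension.
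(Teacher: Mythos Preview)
Your proposal is correct and follows essentially the same route as the paper's proof: in each of the two cases you descend the adjoint involution to the simple quotient $E(P_\Delta)$ (resp.\ $E(P_\delta)$) using Lemma~\ref{L:PerfectGPairing} and $Z\cong Z^*$, and then invoke Lemma~\ref{L:PGL(M,sigma)} to lift the projective $N$-action to a genuine representation, trivializing $\gamma$ (resp.\ $\gamma'$) and passing from $\gamma'$ to $\gamma$ via the identification set up just before the lemma. Your write-up is slightly more expansive than the paper's in that you spell out why the point is $\sigma$-stable and why the lift kills the cocycle, but the argument is the same.
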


Set ${N^*=N_G^*(V,Z)/V}$, where $N_G^*(V,Z)$ is the stabilizer of\/ $\{Z,Z^*\}$ in $N_G(V)$. So $[N^*:N]\leq2$. The following is based on \cite[(14.8)]{Thevenaz}.

\begin{Lemma}\label{L:onto}
Let $L$ be a component of\/ $\op{Ind}_V^GZ$ and let $\epsilon$ be a point of\/ $E_V(L)$ contained in $\Delta$. For ${V\leq H\leq G}$ set ${N_H\!=\!N_H(V,Z)/V}$ and ${N_H^*\!=\!N_H^*(V,Z)/V}$. Then for all ${f\!\in\! E_V(L)\epsilon E_V(L)}$ we have:
\begin{itemize}
\item[(i)] $\pi_\epsilon{\op{tr}}_V^H(f)={\op{tr}}_1^{N_H}\pi_\epsilon(f)$ and $\pi_\epsilon\op{res}_V^H:E_H(L)\rightarrow E_{N_H}(P_\epsilon)$ is onto.
\item[(ii)] If\/ $\sigma$ is a $G$-involution of\/ $E(L)$, then $\pi_{\epsilon,\epsilon^\sigma}{\op{tr}}_V^H(f)={\op{tr}}_1^{N_H^*}\pi_{\epsilon,\epsilon^\sigma}(f)$ and $\pi_{\epsilon,\epsilon^\sigma}\op{res}_V^H: E_H(L)\rightarrow(E(P_\epsilon)\times E(P_{\epsilon^\sigma}))^{N_H^*}$ is onto.
\end{itemize}
\begin{proof}
(i) follows from Remark (19.9) in \cite{Thevenaz} as ${E_H(L)=\op{tr}_V^H(E_V(L))}$.

From the proof of \cite[(14.7)]{Thevenaz} we see that $\pi_{\epsilon,\epsilon^\sigma}{\op{tr}}_V^H(f)={\op{tr}}_1^{N_H^*}(\pi_{\epsilon,\epsilon^\sigma}(f))$. So $\pi_{\epsilon,\epsilon^\sigma}$ maps $\op{tr}_V^H(E_V(L))$ onto ${\op{tr}}_1^{N_H^*}(E(P_\epsilon)\times E(P_{\epsilon^\sigma}))$. This is a $2$-sided ideal of the fixed point subalgebra $(E(P_\epsilon)\times E(P_{\epsilon^\sigma}))^{N_H^*}$ which is contained in $\pi_{\epsilon,\epsilon^\sigma} E_H(L)$.

We now modify \cite[(14.8)]{Thevenaz}. As $L\mid \op{Ind}_V^GZ$, we have $1_L={\op{tr}}_V^H(\iota)$ for some $\iota\in E_V(L)\epsilon E_V(L)$. So $1_{P_\epsilon+P_{\epsilon^\sigma}}={\op{tr}}_1^{N_H^*}\pi_{\epsilon,\epsilon^\sigma}(\iota)$ and hence
$$
{\op{tr}}_1^{N_H^*}(E(P_\epsilon)\times E(P_{\epsilon^\sigma}))=(E(P_\epsilon)\times E(P_{\epsilon^\sigma}))^{N_H^*}.
$$
Now (ii) follows from this and the previous paragraph.
\end{proof}
\end{Lemma}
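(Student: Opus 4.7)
The plan is to mimic the proofs of (14.7) and (14.8) in Th\'evenaz, bootstrapping (ii) from (i) by tracking the $\sigma$-orbit of points rather than a single point. Expanding $\op{tr}_V^H(f) = \sum_{hV \in H/V} {}^h f$, each conjugate ${}^h f$ lies in the two-sided ideal $E_V(L) ({}^h\epsilon) E_V(L)$, so $\pi_\epsilon({}^h f) = 0$ unless ${}^h \epsilon = \epsilon$, i.e.\ unless $hV$ lies in the $H/V$-stabiliser of $\epsilon$, which by definition of $N_G(V,Z)$ is precisely $N_H$. This yields the trace identity in (i) at once. For the surjectivity statement, the image $\pi_\epsilon(\op{tr}_V^H(E_V(L) \epsilon E_V(L))) = \op{tr}_1^{N_H}(E(P_\epsilon))$ is a two-sided ideal of the fixed-point algebra $E(P_\epsilon)^{N_H} = E_{N_H}(P_\epsilon)$; producing a Higman witness $\iota \in E_V(L) \epsilon E_V(L)$ with $\op{tr}_V^H(\iota) = 1_L$ forces $1_{P_\epsilon} = \op{tr}_1^{N_H}\pi_\epsilon(\iota)$, so this ideal exhausts $E_{N_H}(P_\epsilon)$. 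The conclusion then follows because $E_H(L) = \op{tr}_V^H(E_V(L))$.

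For part (ii) I would replace the $H$-orbit of $\epsilon$ by the $H$-orbit of the unordered pair $\{\epsilon, \epsilon^\sigma\}$. Since $\sigma$ commutes with all $G$-conjugations (the content of $(E(L),\sigma)$ being an involutary $G$-algebra), one has ${}^h(\epsilon^\sigma) = ({}^h\epsilon)^\sigma$ and the $H/V$-stabiliser of this pair is exactly $N_H^*$. The combined projection $\pi_{\epsilon,\epsilon^\sigma}$ annihilates ${}^h f$ unless ${}^h\epsilon \in \{\epsilon, \epsilon^\sigma\}$, which gives the trace formula. Surjectivity goes through verbatim: the image is a two-sided ideal of $(E(P_\epsilon) \times E(P_{\epsilon^\sigma}))^{N_H^*}$, and evaluating the same witness $\iota$ shows that this ideal contains the identity of the fixed-point algebra.

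The main obstacle is producing the localised Higman witness $\iota \in E_V(L)\epsilon E_V(L)$ with $\op{tr}_V^H(\iota) = 1_L$: one needs that $L$ is not merely $V$-projective but is so through an idempotent representing the specific point $\epsilon$. This is where Puig correspondence really enters the argument---$\epsilon$ sits inside the global point $\Delta$, which via the embedding $\mathcal{F}$ corresponds to the source $Z$ of $\op{Ind}_V^G Z$---and is the only step that goes beyond straightforward coset bookkeeping and the definition of a primitive ideal annihilator.
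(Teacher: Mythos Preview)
Your proposal is correct and follows essentially the same route as the paper: both arguments invoke the Th\'evenaz (14.7)/(14.8)/(19.9) template, with (ii) obtained by replacing the single point $\epsilon$ by the $\sigma$-orbit $\{\epsilon,\epsilon^\sigma\}$ and observing its $H/V$-stabiliser is $N_H^*$. The paper cites Th\'evenaz directly while you spell out the coset bookkeeping, but the structure---trace identity from annihilation of off-orbit conjugates, then surjectivity via the localised Higman witness $\iota\in E_V(L)\epsilon E_V(L)$ with $\op{tr}_V^H(\iota)=1_L$---is identical.
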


In our situation the Puig correspondence \cite[(19.1)]{Thevenaz} is a multiplicity preserving bijection between the indecomposable components of\/ $\op{Ind}_V^GZ$ with vertex $V$ and the indecomposable components of\/ $P_\Delta$. More concretely, if\/ $e$ is a primitive idempotent in $E_G(\op{Ind}_V^GZ)$ such that $e\op{Ind}_V^GZ$ has vertex $V$, then $\pi_\Delta(e)$ is a primitive idempotent in $E_N(P_\Delta)$, and $\pi_\Delta(e)P_\Delta$ is the Puig correspondent of\/ $e\op{Ind}_V^GZ$.

\subsection{Theorem \ref{T:vertex}(i)}\label{SS:Part1}

\begin{Proposition}\label{P:PuigP(k_N)}
Suppose that $Z$ affords a non-degenerate $V$-invariant symmetric bilinear form $B_0$. Then the Puig correspondent of\/ $P(k_N)$ is the unique indecomposable $B_0^G$-component of\/ $\op{Ind}_V^GZ$ that has vertex $V$.
\begin{proof}
Lemma \ref{L:kgamma} applies, and we adopt its notation. Let $d\in\Delta$ be the orthogonal projection $\op{Ind}_V^G(Z)\rightarrow1\otimes Z$. So $d^{\sigma_0}=d$ and $\op{tr}_V^G(d)=1_{\op{Ind}_V^GZ}$. Set $\ov{d}:=\pi_\Delta(d)$. Then $1_{P_\Delta}={\op{tr}}_1^N(\ov{d})$, using Lemma \ref{L:onto}(i). Moreover $\ov{d}^{\sigma_0}=\ov{d}$. So $(P_\Delta,B_{\sigma_0})\mid\op{Ind}_1^N(\ov{d}P_\Delta,\hat B_{\ov d})$, according to Lemma \ref{L:Higman}. But dim$(\ov{d}P_\Delta)=1$. So $(\ov{d}P_\Delta,\hat B_{\ov d})\cong(k_1,B_1)$ and thus Ind$_1^N(\ov{d}P_\Delta,\hat B_{\ov d})\cong(kN,B_1)$. So we can and do identify $(P_\Delta,B_{\sigma_0})$ with $(kN,B_1)$.

Write $\op{Ind}_V^G(Z)=L_1\dot+L_2\dots+\dots\dot+L_n$, where the $L_i$ are indecomposable $kG$-modules and $L_1$ is the Puig correspondent of\/ $P(k_N)$. Then $L_i\not\cong L_1^*$ for $i>1$. So $B_0^G$ is non-degenerate on $L_1$, by Lemma \ref{L:GowWillems}.

Now suppose that $B_0^G$ is non-degenerate on $L_i$, where $L_i$ has vertex $V$. Then $L_i$ has $V$-source $Z$. Let $\omega\in E_G(\op{Ind}_V^GZ)$ be orthogonal projection onto $L_i$. Then $\pi_\Delta(\omega)$ is a $\sigma_0$-invariant primitive idempotent in $E_N(P_\Delta)$. So $\pi_\Delta(\omega)kN\cong P(k_N)$, by Lemma \ref{L:Fong}. We deduce that $L_i=L_1$.
\end{proof}
\end{Proposition}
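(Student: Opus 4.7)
My plan is to reduce the problem to Fong's Lemma \ref{L:Fong} applied to the regular $kN$-module via the Puig correspondence. Since $Z$ has symmetric type (via $B_0$) with $Z \cong Z^*$, Lemma \ref{L:kgamma} identifies $P_\Delta$ with the regular $kN$-module. Let $\sigma_0$ be the adjoint of $B_0^G$ on $E(\op{Ind}_V^G Z)$; because $Z \cong Z^*$, Lemma \ref{L:PerfectGPairing} gives $\Delta^{\sigma_0} = \Delta$, so $\sigma_0$ induces an involution on the simple quotient $E(P_\Delta)$ and, by Lemma \ref{L:sigma=B}, this involution is the adjoint of a symmetric form $B_{\sigma_0}$ on $P_\Delta$.

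The crux is to identify $(P_\Delta, B_{\sigma_0})$ with $(kN, B_1)$ as symmetric $kN$-modules. By Lemma \ref{L:projection}, there is an orthogonal projection $d \in E_V(\op{Ind}_V^G Z)$ onto $1 \otimes Z$, as $B_0^G$ restricts to the non-degenerate form $B_0$ on $1 \otimes Z$. Since $1 \otimes Z \cong Z$ is indecomposable, $d$ is primitive, hence $d \in \Delta$; moreover $d^{\sigma_0} = d$ and $\op{tr}_V^G(d) = 1_{\op{Ind}_V^G Z}$. Setting $\bar d := \pi_\Delta(d)$, Lemma \ref{L:onto}(i) gives $\op{tr}_1^N(\bar d) = 1_{P_\Delta}$, and $\bar d$ is $\sigma_0$-invariant. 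Applying Lemma \ref{L:Higman} yields a $kN$-isometry $(P_\Delta, B_{\sigma_0}) \to \op{Ind}_1^N(\bar d P_\Delta, \hat B_{\bar d})$; since $\bar d$ is primitive in the matrix algebra $E(P_\Delta)$, we have $\dim \bar d P_\Delta = 1$, and the induced symmetric module is isometric to $(kN, B_1)$.

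With $(P_\Delta, B_{\sigma_0}) \cong (kN, B_1)$ established, write $\op{Ind}_V^G Z = L_1 \dot{+} \cdots \dot{+} L_n$ with $L_1$ the Puig correspondent of $P(k_N)$. For each other $L_i$ ($i > 1$) of vertex $V$, the Puig correspondent of $L_i$ is an indecomposable summand of $kN$ not isomorphic to $P(k_N)$; since $P(k_N) \cong P(k_N)^*$ this forces $L_i \not\cong L_1^*$, and Lemma \ref{L:GowWillems} ensures $B_0^G$ is non-degenerate on $L_1$. For uniqueness, if $B_0^G$ is non-degenerate on some $L_i$ of vertex $V$, Lemma \ref{L:projection} yields a $\sigma_0$-invariant primitive idempotent $\omega \in E_G(\op{Ind}_V^G Z)$ projecting onto $L_i$. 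Then $\pi_\Delta(\omega) \in E_N(kN)$ is a $\sigma_0$-invariant primitive idempotent giving an indecomposable summand of $kN$ on which $B_1$ is non-degenerate, and Fong's Lemma \ref{L:Fong} forces this summand to be $P(k_N)$, so $L_i = L_1$. The trickiest step will be confirming that $\bar d$ remains a $\sigma_0$-invariant primitive rank-one idempotent in $E(P_\Delta)$ after Puig projection; everything else then follows mechanically from Higman's criterion and Fong's Lemma.
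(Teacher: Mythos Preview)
Your proposal is correct and follows essentially the same route as the paper's proof: identify $(P_\Delta,B_{\sigma_0})$ with $(kN,B_1)$ via the $\sigma_0$-invariant orthogonal projection $d$ onto $1\otimes Z$ together with Lemma~\ref{L:onto}(i) and Lemma~\ref{L:Higman}, then use Lemma~\ref{L:GowWillems} for existence and Lemma~\ref{L:Fong} for uniqueness. Your extra justifications (why $d\in\Delta$, why $\bar d$ has rank one, and why $L_i\not\cong L_1^*$ via the Puig correspondent and $P(k_N)\cong P(k_N)^*$) simply spell out points the paper leaves implicit.
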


From now on we assume that $M$ is of symmetric type.

\begin{Proposition}\label{P:case1}
The following are equivalent:
\begin{itemize}
\item[(i)] Each symmetric vertex of\/ $M$ is a Green vertex of $M$.
\item[(ii)] $Z$ has symmetric type and $M$ is the Puig correspondent of\/ $P(k_N)$.
\item[(iii)] $Z$ has symmetric type and if\/ $B_0$ is a $V$-form on $Z$ then $M$ is a $B_0^G$-component of\/ $\op{Ind}_V^GZ$.
\item[(iv)] $M$ has $G$-form $B$ such that $Z$ is a $B$-component of\/ $\op{Res}_V^GM$.
\end{itemize}
\begin{proof}
Assume (i). Then by Lemma \ref{L:Indecomposable}, there is symmetric $kV$-module $(Y,B_1)$ such that $Y$ is indecomposable and $M$ is a $B_1^G$-component of\/ $\op{Ind}_V^G(Y)$. Then $Y$ is a $V$-source of\/ $M$. But $Z={^n}Y$, for some $n\in N_G(V)$. So (iii) holds. Moreover, (i) and (ii) are equivalent, by Proposition \ref{P:PuigP(k_N)}.

Lemma \ref{L:kgamma} applies if (ii), (iii) or (iv) hold. We adopt its notation.

Assume (iii). Then $M$ has a $V$-projective $G$-form. So (i) is true. Lemma \ref{L:projection} implies that there is a $\sigma_0$-invariant idempotent $\omega$ in $\Omega$ and $B_0^G$ is non-degenerate on $\omega\op{Ind}_V^GZ\cong M$. Set $\ov{\omega}=\pi_\Delta(\omega)$. Then $\ov{\omega}kN\cong P(k_N)$. Lemma \ref{L:P(k)} implies that $B_1$ is non-degenerate on a $1$-dimensional subspace of\/ $\ov{\omega}kN$. So again using Lemma \ref{L:projection}, there is a $\sigma_0$-invariant primitive idempotent $\ov{d}\in\ov{\omega}E(kN)\ov{\omega}$, .

Now $\pi_\Delta$ restricts to a surjective map $\omega E_V(\op{Ind}_V^GZ)\omega\rightarrow\ov{\omega}E(kN)\ov{\omega}$. So by Lemma \ref{L:idempotent_lifting}(iii) there is a $\sigma_0$-invariant primitive idempotent $d\in \omega E_V(\op{Ind}_V^GZ)\omega$ such that $\pi_\Delta(d)=\ov{d}$. In particular $d\in\Delta$ and $d=\omega d\omega$. Then $B_0^G$ is non-degenerate on the $V$-component $d\op{Ind}_V^GZ$ of\/ $\omega\op{Ind}_V^G(Z)\cong M$. So (iv) holds.

Assume (iv). By Lemma \ref{L:projection} this means that there is $d\in\delta$ with $d^\sigma=d$. As $\pi_\delta(d)$ is a $\sigma$-invariant primitive idempotent in $E(P_\delta)$, $B_\sigma$ is a diagonalizable $N$-form on $P_\delta$. So $P_\delta\cong P(k_N)$, using Lemma \ref{L:Fong}. Now $P(k_N)$ is the only self-dual principal indecomposable $kN$-module which has multiplicity $1$ in $kN$. So $P_\delta\cong P(k_N)$, when regarded as a component of\/ $P_\Delta$. So (ii) holds.
\end{proof}
\end{Proposition}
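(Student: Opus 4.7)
My plan is to prove the equivalences as a cycle $(i)\Rightarrow(iii)\Rightarrow(iv)\Rightarrow(ii)\Rightarrow(i)$. The organising idea is that via the Puig correspondence, all four conditions translate into statements about the regular $kN$-module $P_\Delta$ and its summand $P(k_N)$, which is already well-understood through Lemma \ref{L:Fong} and Lemma \ref{L:P(k)}. Thus the bulk of the work is bookkeeping: lifting $\sigma$-invariant idempotents back and forth between $E_V(\op{Ind}_V^G Z)$ and $E(P_\Delta)$ via Lemma \ref{L:idempotent_lifting}, and making sure each of these conditions triggers the hypothesis of Lemma \ref{L:kgamma} so that we may actually identify $P_\Delta$ with $kN$ together with a compatible symmetric form.

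First I would deduce $(i)\Rightarrow(iii)$. Assumption (i) forces $M$ to be symmetrically $V$-projective, since otherwise every symmetric vertex would properly contain a Green vertex. Applying Lemma \ref{L:Indecomposable} produces an indecomposable symmetric $kV$-module $(Y,B_Y)$ such that $M$ is a non-degenerate component of $\op{Ind}_V^G(Y,B_Y)$. Since $M$ has Green vertex $V$, $Y$ is forced to be a $V$-source of $M$, hence $N_G(V)$-conjugate to $Z$, giving $Z$ the symmetric type; then transporting $B_Y$ shows (iii) holds for the given $B_0$. Next, for $(iii)\Rightarrow(iv)$, Lemma \ref{L:kgamma} applies and identifies $(P_\Delta,B_{\sigma_0})$ with $(kN,B_1)$; the hypothesis that $B_0^G$ is non-degenerate on $M$ yields a $\sigma_0$-invariant idempotent $\omega\in\Omega$, whose image $\overline{\omega}:=\pi_\Delta(\omega)$ is a self-adjoint primitive idempotent in $E(kN)$ with $\overline{\omega}kN\cong P(k_N)$. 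Now the diagonalisable form $B_1$ on $\overline{\omega}kN$ provided by Lemma \ref{L:P(k)} contains a $1$-dimensional non-degenerate subspace, giving a $\sigma_0$-invariant primitive $\overline{d}\in\overline{\omega}E(kN)\overline{\omega}$; lifting with Lemma \ref{L:idempotent_lifting}(iii) gives a $\sigma_0$-invariant primitive $d\in\delta$ with $\omega d\omega=d$, and Lemma \ref{L:projection} then makes $B_0^G$ non-degenerate on $dM\cong Z$, proving (iv).

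For $(iv)\Rightarrow(ii)$, the implication runs in reverse: a self-adjoint $d\in\delta$ descends under $\pi_\delta$ to a self-adjoint primitive idempotent of $E(P_\delta)$, so $B_\sigma$ has a $1$-dimensional non-degenerate subspace and is therefore a diagonalisable (non-symplectic) $N$-form on $P_\delta$. Fong's Lemma \ref{L:Fong} then forces the head of $P_\delta$ to be trivial, so $P_\delta\cong P(k_N)$; as $P(k_N)$ occurs with multiplicity $1$ in $kN\cong P_\Delta$, this identification is unique and $M$ is the Puig correspondent of $P(k_N)$. Finally, $(ii)\Rightarrow(i)$ is essentially Proposition \ref{P:PuigP(k_N)}: (ii) gives $Z$ symmetric type, and the Proposition says the corresponding Puig correspondent is a $B_0^G$-component of $\op{Ind}_V^G Z$, so $M$ is symmetrically $V$-projective; by minimality any symmetric vertex $T$ satisfies $T\leq_G V$, while Theorem \ref{T:vertex} forces $V\leq_G T$, hence $T=_G V$. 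I expect the main obstacle to be the step $(iii)\Rightarrow(iv)$: one must ensure that the lifted idempotent $d$ lies in the specific point $\delta$ attached to $Z$ rather than in some other point inside $\Delta$, and this relies crucially on first invoking Lemma \ref{L:kgamma} to normalise $P_\Delta$ to the regular $kN$-module so that the canonical component $P(k_N)$ is genuinely distinguished.
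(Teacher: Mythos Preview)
Your proposal follows essentially the same route as the paper's proof: both establish the cycle via Lemma~\ref{L:Indecomposable} for $(i)\Rightarrow(iii)$, Lemma~\ref{L:kgamma} plus idempotent lifting and Lemma~\ref{L:P(k)} for $(iii)\Rightarrow(iv)$, Fong's Lemma~\ref{L:Fong} for $(iv)\Rightarrow(ii)$, and Proposition~\ref{P:PuigP(k_N)} to close the loop.

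There is, however, a genuine slip in your $(ii)\Rightarrow(i)$ step. You write ``by minimality any symmetric vertex $T$ satisfies $T\leq_G V$'', but this is exactly the inference that fails in general---see Example~\ref{Ex:D12}, where $M$ is symmetrically $T_1$-projective yet the symmetric vertex $T_2$ is not contained in any conjugate of $T_1$. Knowing that $M$ is symmetrically $V$-projective does \emph{not} force an arbitrary symmetric vertex below a conjugate of $V$. The correct argument runs in the opposite direction: from $(ii)$ and Proposition~\ref{P:PuigP(k_N)} one has $M$ symmetrically $V$-projective; given any symmetric vertex $T$, Lemma~\ref{L:GreenSymmetric} (not Theorem~\ref{T:vertex}, which is the theorem being assembled here) provides a Green vertex $V'\leq T$ with $V'=_G V$; since $M$ is then symmetrically $V'$-projective and $V'\leq T$, \emph{minimality of $T$} forces $T=V'$. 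So the containment you need comes from Lemma~\ref{L:GreenSymmetric}, and minimality is then used to rule out $V'\lneq T$, not to produce $T\leq_G V$.

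A minor point: the identification $(P_\Delta,B_{\sigma_0})\cong(kN,B_1)$ you invoke under $(iii)\Rightarrow(iv)$ is not part of the statement of Lemma~\ref{L:kgamma}; it is established in the proof of Proposition~\ref{P:PuigP(k_N)} via $\op{tr}_1^N(\overline d)=1_{P_\Delta}$ and Lemma~\ref{L:Higman}. You should either cite that proposition or reproduce the short argument.
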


The principal $2$-block ${\mathcal B}_0={\mathcal B}_0(G)$ of $G$ is the block of $kG$ which contains $k_G$.

\begin{Corollary}\label{C:principal}
If some Green vertex of $M$ is a symmetric vertex of\/ $M$ then $M$ belongs to the principal $2$-block of $G$.
\begin{proof}
From Proposition \ref{P:case1}, $M$ is the Puig correspondent of\/ $P(k_N)$. Set $C=VC_G(V)$. Then $\op{Res}_{C/V}^N(P(k_N))\cong P(k_{C/V})^m$, for some $m\geq1$. So $(V,P(k_{C/V}))$ is a root of\/ $M$, in the terminology of \cite{KulshammerRoots}. Now $P(k_{C/V})$ is in ${\mathcal B}_0(C)$, and Brauer's Third Main Theorem implies that $(V,{\mathcal B}_0(C))$ is a ${\mathcal B}_0$-subpair. So $M$ belongs to ${\mathcal B}_0$, according to \cite{KulshammerRoots}.
\end{proof}
\end{Corollary}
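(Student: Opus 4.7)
The plan is to invoke Proposition \ref{P:case1} to identify $M$ explicitly as a Puig correspondent, and then use the block-theoretic machinery of roots together with Brauer's Third Main Theorem. First I would observe that by the equivalence of (i) and (ii) in Proposition \ref{P:case1}, the hypothesis that some Green vertex $V$ of $M$ is also a symmetric vertex forces $Z$ to have symmetric type and $M$ to be the Puig correspondent of the trivial-head principal indecomposable $P(k_N)$, where $N=N_G(V,Z)/V$.

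Next I would restrict attention to the normal subgroup $C/V$ of $N$, where $C=VC_G(V)$. Since the image of $C$ in $N$ centralizes the defect-like subgroup $V$ trivially, the restriction of $P(k_N)$ to $k(C/V)$ should decompose as a direct sum of copies of the principal projective $P(k_{C/V})$; this is the analogue of the statement that a Sylow-trivial summand restricts to trivial Sylow-trivial summands. This produces the source pair $(V,P(k_{C/V}))$ as a \emph{root} of $M$ in the sense of K\"ulshammer.

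Finally, I would appeal to Brauer's Third Main Theorem: the pair $(V,\mathcal{B}_0(C))$ is a $\mathcal{B}_0(G)$-subpair, because $\mathcal{B}_0(C)$ is the Brauer correspondent of $\mathcal{B}_0(G)$ relative to $V$. The module $P(k_{C/V})$ (lifted to $kC$ via inflation along $C\to C/V$) belongs to $\mathcal{B}_0(C)$, so by K\"ulshammer's criterion relating the block of $M$ to the block of any of its roots, $M$ must lie in $\mathcal{B}_0(G)$.

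The main obstacle will be the bookkeeping for the restriction step: one must check carefully that when $P(k_N)$ is restricted from $N$ to its normal subgroup $C/V$, every indecomposable summand is isomorphic to $P(k_{C/V})$ and none can belong to a non-principal block of $k(C/V)$. This follows because $k_N$ is trivial on restriction to $C/V$ and because $C/V$ is a $2'$-group, but it is the one place where a small calculation is unavoidable. Everything else is a direct application of Proposition \ref{P:case1} and standard block theory.
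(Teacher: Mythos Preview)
Your outline is essentially the paper's own proof: invoke Proposition \ref{P:case1} to identify $M$ with the Puig correspondent of $P(k_N)$, restrict to $C/V$ with $C=VC_G(V)$ to obtain the root $(V,P(k_{C/V}))$, and then apply K\"ulshammer's root criterion together with Brauer's Third Main Theorem. So the strategy is correct and matches the paper.

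One point needs correction. You justify $\op{Res}_{C/V}^N P(k_N)\cong P(k_{C/V})^m$ partly by asserting that $C/V$ is a $2'$-group. This is false in general: $C/V=VC_G(V)/V\cong C_G(V)/Z(V)$ can certainly have even order (take $G$ a $2$-group and $V$ a non-self-centralizing subgroup). The correct reason is the one you state first, sharpened via Clifford theory: $C/V$ is \emph{normal} in $N$, the restriction $\op{Res}_{C/V}^N P(k_N)$ is projective, and for a simple $k(C/V)$-module $T$ the summand $P(T)$ occurs iff $k_N$ is a composition factor of $\op{Ind}_{C/V}^N T$; by Mackey and normality this forces $k_{C/V}\cong{}^nT$ for some $n$, hence $T\cong k_{C/V}$. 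The paper simply asserts the isomorphism without proof, so your instinct that ``a small calculation is unavoidable'' here is right---just base it on normality rather than on a $2'$ claim.
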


\subsection{Theorem \ref{T:vertex}(ii)}\label{SS:Part2}

\begin{Proposition}\label{P:case2}
Suppose that the sources of $M$ are self-dual, but $B$ is degenerate on each direct summand of $\op{Res}_V^GM$ that is a source of $M$. Then
\begin{itemize} 
\item[(i)] There is $V\leq T\leq N_G(V,Z)$ with $[T:V]=2$ such that $\op{Ind}_V^TZ$ is a $B$-component of\/ $\op{Res}_T^GM$.
\item[(ii)] Let $B_0$ be a $T$-form on $\op{Ind}_V^TZ$ such that $Z$ is not a $B_0$-component of\/ $\op{Res}_V^T\op{Ind}_V^TZ$. Then $M$ is a $B_0^G$-component of\/ $\op{Ind}_V^GZ$.
\item[(iii)] Either $V$ or $T$ is a symmetric vertex of\/ $M$.
\end{itemize}
\begin{proof}
Lemma \ref{L:kgamma} applies, and we adopt its notation.

As $B$ is degenerate on each direct summand of\/ $\op{Res}_V^GM$ that is isomorphic to $Z$, $\sigma$ does not fix any idempotent in $\delta$. So by Lemma \ref{L:idempotent_lifting}(i), $\sigma$ does not fix any primitive idempotent in $E(P_\delta)$. This means that $(P_\delta,B_\sigma)$ is a symplectic $kN$-module. Lemma \ref{L:BtkGe} gives an involution $t\in N$ such that $B_t$ is non-degenerate on $P_\delta$. Moreover $\op{Res}_{\langle t\rangle}^G(P_\delta,B_\sigma)$ has a component $(k\langle t\rangle,B_t)$. So there is a $\sigma$-invariant primitive idempotent $\ov{y}\in E_{\langle t\rangle}(P_\delta)$.

Let $T\leq N$ with $T/V=\langle t\rangle$. Then $\pi_\delta\op{res}_V^T:E_T(M)\rightarrow E_{\langle t\rangle}(P_\delta)$ is surjective, by Lemma \ref{L:onto}(i). So by Lemma \ref{L:idempotent_lifting}(iii), there is a primitive $\sigma$-invariant idempotent $y\in E_T(M)$ with $\pi_\delta(y)=\ov{y}$. So $Y=yM$ is a $B$-direct summand of\/ $\op{Res}_T^GM$. Now $Y$ is $V$-projective, $|T/V|=2$ and $Z$ is a component of\/ $Y_V$. So $Y\cong\op{Ind}_V^TZ$. The conclusion of (i) follows.

Assume the hypothesis of (ii) and set $Y:=\op{Ind}_V^TZ$. Note that $\op{Ind}_T^GY\cong\op{Ind}_V^GZ$. Let $\sigma_0$ be the adjoint of\/ $B_0^G$ on $E(\op{Ind}_V^GZ)$. Now $(E(P_\Delta),\sigma_0)$ is an involutary $N$-algebra. So $k_\gamma N\cong kN$, $P_\Delta$ is the regular $kN$-module and $\sigma_0$ is the adjoint of a symmetric $N$-form $B_{\sigma_0}$ on $P_\Delta$. By hypothesis on $B_0$, the $N$-form $B_{\sigma_0}$ is symplectic.

Let $e\in E_T(\op{Ind}_V^GZ)$ be orthogonal projection onto $1\otimes Y$. Then ${\op{tr}}_T^G(e)=1_{\op{Ind}_V^GZ}$ and ${e\in{\op{tr}}_V^T\left({E_V(\op{Ind}_V^GZ)}\,\Delta\,{E_V(\op{Ind}_V^GZ)}\right)}$. Set $\ov{e}=\pi_\Delta(e)$. Then
$$
1_{E(P_\Delta)}=\pi{\op{tr}}_T^G(e)={\op{tr}}_{\langle t\rangle}^{N}(\ov{e}),\quad\mbox{using Lemma \ref{L:onto}(i)}.
$$
As $\ov{e}^{\sigma_0}=\ov{e}$, Lemma \ref{L:Higman} gives a $kN$-isometry $(P_\Delta,B_{\sigma_0})\rightarrow\op{Ind}_{\langle t\rangle}^{N}(\ov{e}P_\Delta,\hat{B}_{\ov{e}})$. But dim$(\ov{e}P_\Delta)=2$. So this isometry is surjective, as both sides have dimension $|N|$. Now $B_{\sigma_0}$ is symplectic, and $\langle t\rangle$ is cyclic of order $2$. So $(\ov{e}P_\Delta,\hat{B}_{\ov{e}})\cong(k\langle t\rangle,B_t)$. We deduce that $(P_\Delta,B_{\sigma_0})\cong(kN,B_t)$.

Now $P_\delta$ is a $B_t$-component of\/ $P_\Delta$. So there is a primitive $\sigma_0$-invariant idempotent $\ov{\omega}\in E_N(P_\Delta)$ with $\ov{\omega}P_\Delta\cong P_\delta$. Since $\pi_\Delta{\op{res}}_V^G:E_G(\op{Ind}_V^GZ)\rightarrow E_N(P_\Delta)$ is surjective, Lemma \ref{L:idempotent_lifting}(iii) gives a primitive $\sigma_0$-invariant idempotent $\omega\in E_G(\op{Ind}_V^GZ)$ with $\pi_\Delta(\omega)=\ov{\omega}$. So $\omega\op{Ind}_V^GZ$ is a $B_0^G$-direct summand of\/ $\op{Ind}_V^GZ$. But $\omega\op{Ind}_V^G(Z)\cong M$. The conclusion of (ii) follows.

(iii) holds as $M$ has a $T$-projective symmetric $G$-form.
\end{proof}
\end{Proposition}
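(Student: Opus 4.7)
The proof plan is to exploit the Puig correspondence set up in Section~\ref{SS:Indecomposable}, which gives us the adjoint-compatible bijection between components of $\op{Ind}_V^G Z$ with vertex $V$ and components of the regular $kN$-module. Since $Z\cong Z^*$ and $M$ has symmetric type, Lemma~\ref{L:kgamma} applies, so $P_\Delta\cong kN$ as $kN$-modules and the $G$-form $B$ on $M$ transfers to a symmetric $N$-form $B_\sigma$ on the Puig correspondent $P_\delta$ of $M$, where $\sigma$ is the adjoint of $B$.

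For part (i), the key translation is that the degeneracy hypothesis on $B$ says, via Lemma~\ref{L:projection}, that no idempotent of $\delta$ is $\sigma$-invariant. Lemma~\ref{L:idempotent_lifting}(i) then propagates this to $E(P_\delta)$: no primitive idempotent there is $\sigma$-invariant. By Lemma~\ref{L:Fong} (or directly from Lemma~\ref{L:projection}), the form $B_\sigma$ on $P_\delta$ must be symplectic. Lemma~\ref{L:BtkGe} produces an involution $tV\in N$ such that $(k\langle tV\rangle,B_{tV})$ is a component of $\op{Res}_{\langle tV\rangle}^N(P_\delta,B_\sigma)$. Lifting $tV$ to $T\leq N_G(V,Z)$ with $T/V=\langle tV\rangle$, Lemma~\ref{L:projection} inside $E_{\langle tV\rangle}(P_\delta)$ gives a $\sigma$-invariant primitive idempotent $\ov y$. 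Surjectivity of $\pi_\delta\circ\op{res}_V^T$ from Lemma~\ref{L:onto}(i) together with Lemma~\ref{L:idempotent_lifting}(iii) lifts $\ov y$ to a $\sigma$-invariant primitive idempotent $y\in E_T(M)$, producing the required $B$-direct summand $yM$, which is $V$-projective, $\cong\op{Ind}_V^T Z$ since $[T:V]=2$ and $Z\mid\op{Res}_V^T yM$.

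For part (ii), set $Y=\op{Ind}_V^T Z$ and let $\sigma_0$ be the adjoint of $B_0^G$. Apply Lemma~\ref{L:kgamma} once more, now with the form $B_0^G$, to view $(P_\Delta,B_{\sigma_0})$ as a symmetric $kN$-module with $P_\Delta\cong kN$. Let $e$ be orthogonal projection onto $1\otimes Y$; then $\op{tr}_T^G(e)=1$, so Lemma~\ref{L:onto}(i) gives $\op{tr}_{\langle tV\rangle}^N(\pi_\Delta e)=1_{P_\Delta}$. Lemma~\ref{L:Higman} yields an isometry $(P_\Delta,B_{\sigma_0})\to\op{Ind}_{\langle tV\rangle}^N(\pi_\Delta(e)P_\Delta,\widehat B_{\pi_\Delta e})$, surjective by dimension count. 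The hypothesis that $Z$ is not a $B_0$-component of $\op{Res}_V^T Y$ forces $\widehat B_{\pi_\Delta e}$ on the $2$-dimensional space $\pi_\Delta(e)P_\Delta$ to be symplectic, giving $(P_\Delta,B_{\sigma_0})\cong(kN,B_{tV})$. Since $B_{tV}$ is non-degenerate on $P_\delta\mid P_\Delta$ (established in part (i)), Lemma~\ref{L:projection} supplies a $\sigma_0$-invariant primitive idempotent $\ov\omega\in E_N(P_\Delta)$ with $\ov\omega P_\Delta\cong P_\delta$. Lifting through the surjection $\pi_\Delta\circ\op{res}_V^G$ via Lemma~\ref{L:idempotent_lifting}(iii) gives a $\sigma_0$-invariant primitive idempotent $\omega\in E_G(\op{Ind}_V^G Z)$ with $\omega\op{Ind}_V^G Z\cong M$, making $M$ a $B_0^G$-component.

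Part (iii) is then immediate: by (ii), $M$ carries a $T$-projective $G$-form, so a symmetric vertex of $M$ is contained up to conjugacy in $T$; by Lemma~\ref{L:GreenSymmetric}, the symmetric vertex contains a Green vertex with index at most~$2$, and since $V\leq T$ with $[T:V]=2$, the symmetric vertex must be $V$ or $T$. The main obstacle I anticipate is the bookkeeping in part (ii) around ensuring that the adjoint-compatible lift of the idempotent $\ov\omega$ actually produces a summand isomorphic to $M$ rather than to some other component of $\op{Ind}_V^G Z$ with vertex $V$; this is where the precise setup of the Puig correspondence (and the uniqueness of $P_\delta$ inside $P_\Delta\cong kN$ as the image of $M$) is indispensable.
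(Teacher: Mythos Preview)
Your proposal is correct and follows essentially the same route as the paper's proof: the same use of Lemma~\ref{L:kgamma} to identify $P_\Delta\cong kN$, the same deduction via Lemmas~\ref{L:projection} and~\ref{L:idempotent_lifting}(i) that $B_\sigma$ is symplectic on $P_\delta$, the same appeal to Lemma~\ref{L:BtkGe} to produce the involution $t$, and the same lifting argument through $\pi_\delta$ and $\pi_\Delta$ via Lemmas~\ref{L:onto}(i) and~\ref{L:idempotent_lifting}(iii). Your anticipated obstacle in (ii) is exactly what the Puig correspondence handles: since $\ov\omega P_\Delta\cong P_\delta$ and $P_\delta$ is by construction the Puig correspondent of $M$, the lift $\omega$ automatically satisfies $\omega\op{Ind}_V^G Z\cong M$.
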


\subsection{Theorem \ref{T:vertex}(iii)}\label{SS:Part3}

\begin{Proposition}\label{P:case3}
Suppose that no source of $M$ is self-dual. Then
\begin{itemize}
\item[(i)] $N_G^*(V,Z)$ has a subgroup $T$ which contains $V$ such that $[T:V]=2$ and $\op{Ind}_V^TZ$ is a $B$-component of\/ $\op{Res}_T^GM$. Then $N^*={N\rtimes T/V}$.
\item[(ii)] $M$ is a $B_0^G$-component of\/ $\op{Ind}_V^GZ$, if $B_0$ is a symmetric $T$-form on $\op{Ind}_V^TZ$.
\item[(iii)] $T$ is a symmetric vertex of\/ $M$.
\end{itemize}
\begin{proof}
Set $\Delta^\sigma$ as the point of\/ $E_G(\op{Ind}_V^GZ)$ corresponding to $Z^*$. Lemma \ref{L:PerfectGPairing} implies that $\delta^\sigma\ne\delta$. So $({E(P_\delta)\!\times\!E(P_{\delta^\sigma}),\sigma})$ is an involutary $N^*$-algebra satisfying the hypothesis of Lemma \ref{L:sigma=B}(ii). Moreover, this algebra is embedded in the $N^*$-algebra ${E(P_\Delta)\!\times \!E(P_{\Delta^\sigma})}$ as follows. According to Lemma \ref{L:onto}(ii), the restriction map $\pi_{\Delta,\Delta^\sigma}:E_G(\op{Ind}_V^GZ)\!\rightarrow\!\left({E(P_\Delta)\!\times\!E(P_{\Delta^\sigma})}\right)^{N^*}$ is surjective. Let $\omega\in\Omega$. Then $\ov{\omega}:=\pi_{\Delta,\Delta^\sigma}(\omega)$ is a primitive idempotent in $(E(P_\Delta)\times E(P_{\Delta^\sigma}))^{N^*}$. We identify $E(P_\delta)\times E(P_{\delta^\sigma})$ with $\ov{\omega}\left(E(P_\Delta)\times E(P_{\Delta^\sigma})\right)\ov{\omega}$, and $P_\delta+P_{\delta^\sigma}$ with $\ov{\omega}(P_\Delta+P_{\Delta^\sigma})$.

By Lemma \ref{L:E1xE2} there is a commutative diagram
$$
\begin{CD}
1@>>>O@>{\rm inc}>>H@>\theta>>N^*@>>>1\\
 && @VV{\eta}V@VV{\chi}V@VV{\rho}V&&\\
1@>>>K@>{\rm inc}>>\op{Sp}(P_\delta,P_{\delta^\sigma})@>{\rho}>>\op{PGL}(P_\delta,P_{\delta^\sigma},\sigma)@>>>1\\
\end  {CD}
$$
where $O$ is a finite cyclic group of odd order and $\theta(C_H(O))=N$. Each element of\/ $H\backslash C_H(O)$ maps $P_\delta$ onto $P_{\delta^\sigma}$. Moreover $\sigma$ is the adjoint of a symplectic $H$-form $B_\sigma$ on $P_\delta+P_{\delta^\sigma}$, with
\begin{equation}\label{E:perps}
(P_\delta)^\perp=P_\delta\quad\mbox{and}\quad(P_{\delta^\sigma})^\perp=P_{\delta^\sigma}.
\end{equation}

Now $e_\eta=\frac{1}{|O|}\sum_{\lambda\in O}\eta(\lambda^{-1})\lambda$ is a central idempotent in $kH$ such that $P_\Delta+P_{\Delta^\sigma}\cong kHe_\eta$ as $kH$-modules. So $E_{N^*}(P_\Delta+P_{\Delta^\sigma})\cong e_\eta kHe_\eta$.

By Lemma \ref{L:BtkGe} there is an involution $t\in H$ such $B_t$ is non-degenerate on $P_\delta+P_{\delta^\sigma}$ and $k\langle t\rangle$ is a $B_\sigma$-component of\/ $\op{Res}^H_{\langle t\rangle}(P_\delta+P_{\delta^\sigma})$. This means that there is $p\in P_\delta+P_{\delta^\sigma}$ such that $B_\sigma(p,tp)\!\ne\!0$. Write $p=p_1+p_2$ where $p_1\in P_\delta$ and $p_2\in P_{\delta^\sigma}$.

We claim that $t\not\in C_H(O)$. Otherwise, $tp_1\in P_\delta$ and $tp_2\in P_{\delta^\sigma}$. Then
$$
\begin{array}{lll}
B_\sigma(p,tp)
&=B_\sigma(p_1,tp_2)+B_\sigma(p_2,tp_1),&\mbox{by \eqref{E:perps}}\\
&=B_\sigma(p_1,tp_2)+B_\sigma(tp_1,p_2),&\mbox{as $B_\sigma$ is symmetric}\\
&=0,&\mbox{as $t^\sigma=t$.}
\end{array}
$$
This contradicts our choice of\/ $p$ and thus establishes our claim.

Now $tp_1\in P_{\delta^\sigma}$ and $tp_2\in P_\delta$. So $B_\sigma(p,tp)=B_\sigma(p_1,tp_1)+B_\sigma(p_2,tp_2)$. Replace $p$ by $p_1$ or $tp_2$ so that $p\in P_\delta$ and $B_\sigma(p,tp)\ne0$. Then replace $p$ by $\sqrt{B_\sigma(p,tp)^{-1}}\,p$, so that $B_\sigma(p,tp)=1$.

Define $\ov{y}\in E_{\langle t\rangle}(P_\delta+P_{\delta^\sigma})$ by $\ov{y}(x)=B(x,tp)p+B(x,p)tp$, for all $x\in P_\delta+P_{\delta^\sigma}$. Then $\ov{y}$ is orthogonal projection onto $kp+ktp$. Moreover, $\ov{y}P_\delta\subseteq P_\delta$ and $\ov{y}P_{\delta^\sigma}\subseteq P_{\delta^\sigma}$. So $\ov{y}$ is a $\sigma$-invariant primitive idempotent in $(E(P_\delta)\times E(P_{\delta^\sigma}))^{\langle t\rangle}$.

Let $T\geq V$ such that $T/V=\langle\theta(t)\rangle$. Then $T/V$ is a complement to $N$ in $N^*$, as $N_G^*(V,Z)=N_G(V,Z)T$ and $N_G(V,Z)\cap T=V$. Now by Lemma \ref{L:onto}(ii) the restriction $\pi_{\delta,\delta^\sigma}:E_T(M)\rightarrow(E(P_\delta)\times E(P_{\delta^\sigma}))^{\langle t\rangle}$ is surjective. So by Lemma \ref{L:idempotent_lifting}(iii) there is a $\sigma$-invariant primitive idempotent $y\in E_T(M)$ such that $\pi_{\delta,\delta^\sigma}(y)=\ov{y}$. Then $yM$ is a $B$-direct summand of\/ $\op{Res}_T^GM$ which lies over $Z$ and $Z^*$. But $|T/V|=2$. So $yM\cong \op{Ind}_V^TZ$. 

Let $B_0$ be any symplectic $T$-form on $\op{Ind}_V^TZ$. Identifying $\op{Ind}_T^G(\op{Ind}_V^TZ)$ with $\op{Ind}_V^GZ$, we regard $B_0^G$ as a symplectic $G$-form on $\op{Ind}_V^GZ$. Let $\sigma_0$ be the adjoint of\/ $B_0^G$ on $E(\op{Ind}_V^GZ)$. Then $({E(P_\Delta)\!\times\! E(P_{\Delta^\sigma}}),\sigma_0)$ is an involutary $N^*$-algebra satisfying the hypothesis of Lemma \ref{L:sigma=B}(ii). So $\sigma_0$ is the adjoint of a symplectic $N^*$-form $B_{\sigma_0}$ on $P_\Delta+P_{\Delta^\sigma}$ such that $(P_\Delta)^\perp=P_\Delta$ and $(P_{\Delta^{\sigma_0}})^\perp=P_{\Delta^{\sigma_0}}$.

Let $e\in E_T(\op{Ind}_V^GZ)$ be orthogonal projection onto ${1\otimes \op{Ind}_V^TZ}$. Then ${\op{tr}}_T^G(e)=1_{\op{Ind}_V^GZ}$ and $e\in{\op{tr}}_V^T\left({E_V(\op{Ind}_V^GZ)}\,\Delta\,{E_V(\op{Ind}_V^GZ)}\right)$. Set $\ov{e}=\pi_{\Delta,\Delta^\sigma}(e)$, a primitive idempotent in $({E(P_\Delta)\!\times\!E(P_{\Delta^\sigma})})^{\langle t\rangle}$. As $O$ acts trivially on ${E(P_\Delta)\!\times\!E(P_{\Delta^\sigma}})$, we have
$$
\begin{array}{ll}
1_{P_\Delta\!+\!P_{\Delta^\sigma}}=\pi_{\Delta,\Delta^\sigma}{\op{tr}}_T^G(e)
&={\op{tr}}_{\langle \theta(t)\rangle}^{N^*}(\ov{e}),\quad\mbox{by Lemma \ref{L:onto}(ii)}\\
&=\op{tr}_1^N(\ov{e}),\quad\mbox{as $N^*=N\!:\!\langle\theta(t)\rangle$.}\\
&=\op{tr}_{O}^{C_H(O)}(\ov{e})=\op{tr}_1^{C_H(O)}(\ov{e}),\quad\mbox{as $|O|$ is odd.}\\
&={\op{tr}}_{\langle t\rangle}^{H}(\ov{e}),\quad\mbox{as $H=C_H(O):\langle t\rangle$.}\\
\end{array}
$$
Now $(\ov{e}(P_\Delta+P_{\Delta^\sigma}),B_{\sigma_0})\cong(k\langle t\rangle,B_t)$ as symmmetric $k\langle t\rangle$-modules. It then follows from Proposition \ref{P:Higman} that $(P_\Delta+P_{\Delta^\sigma},B_{\sigma_0})\cong(kHe_\eta,B_t)$.

As $B_t$ is non-degenerate on $P_\delta+P_{\delta^\sigma}$, we may choose $\ov{\omega}$ so that $\sigma_0$ is non-degenerate on
$\ov{\omega}(P_\Delta+P_{\Delta^\sigma})$. Then $\ov{\omega}$ belongs to ${E(P_\Delta)\!\times\!E(P_{\Delta^\sigma})}$, which is a $\sigma_0$-invariant semi-simple subalgebra of\/ $E(P_\Delta+P_{\Delta^\sigma})$. So Lemma \ref{L:efinS} implies that the orthogonal projection $\ov{\omega_1}$ onto $\ov{\omega}(P_\Delta+P_{\Delta^\sigma})$ belongs to $(E(P_\Delta)\times E(P_{\Delta^\sigma}))^{N^*}$.

Lemma \ref{L:onto}(ii) states that $\pi_{\Delta,\Delta^{\sigma}}:E_G(\op{Ind}_V^GZ)\rightarrow(E(P_\Delta)\times E(P_{\Delta^\sigma}))^{N^*}$ is surjective. So by Lemma \ref{L:idempotent_lifting}(iii) there is a $\sigma_0$-invariant primitive idempotent $\omega_1\in E_G(\op{Ind}_V^GZ)$ such that $\pi_{\Delta,\Delta^{\sigma}}(\omega_1)=\ov{\omega_1}$. Then $\omega_1\op{Ind}_V^GZ$ is a $B_0^G$-direct summand of\/ $\op{Ind}_V^GZ$ that is isomorphic to $M$. This completes the proof of (i) and (ii).

(iii) holds as $M$ has a $T$-projective $G$-form, but $V$ is not a symmetric vertex of\/ $M$.
\end{proof}
\end{Proposition}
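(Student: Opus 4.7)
My proof plan is to leverage the Puig correspondence setup from \ref{SS:Indecomposable} together with the structural Lemmas \ref{L:sigma=B}(ii) and \ref{L:E1xE2} which were designed for precisely this situation. Since by hypothesis no source of $M$ is self-dual, Lemma \ref{L:PerfectGPairing} forces $\delta^\sigma \neq \delta$, so the symplectic ``paired'' picture applies to $(E(P_\delta) \times E(P_{\delta^\sigma}), \sigma)$, which is an involutary $N^*$-algebra interchanging the two simple factors. The overall strategy is to realize the interesting idempotents in the multiplicity algebra level and lift them back to $E_T(M)$ via Lemmas \ref{L:onto}(ii) and \ref{L:idempotent_lifting}(iii).

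First I would invoke Lemma \ref{L:E1xE2} to produce the commutative diagram in which an extension $H$ of $N^*$ by an odd cyclic group $O$ maps into $\op{Sp}(P_\delta, P_{\delta^\sigma})$, realizing $\sigma$ as the adjoint of a symplectic $H$-form $B_\sigma$ on $P_\delta + P_{\delta^\sigma}$ that makes each $P_{\delta^{(\sigma)}}$ totally isotropic. Next, apply Lemma \ref{L:BtkGe} to the $kH$-module $P_\delta + P_{\delta^\sigma}$ (which is isomorphic to $kHe_\eta$ via the central idempotent attached to the character $\eta$) to obtain an involution $t \in H$ with $B_t$ non-degenerate and a $B_\sigma$-isometric embedding of $k\langle t\rangle$. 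The crucial parity step is to verify $t \notin C_H(O)$: if $t$ preserved both $P_\delta$ and $P_{\delta^\sigma}$, then for any $p=p_1+p_2$ the value $B_\sigma(p,tp)$ would collapse using total isotropy and symmetry of $B_\sigma$ together with $t^\sigma=t$ — this is exactly where ``no source is self-dual'' pays off.

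After rescaling, I can take $p \in P_\delta$ with $B_\sigma(p,tp)=1$, and define $\bar y$ as the rank-two orthogonal projection onto $kp + ktp$; by construction $\bar y$ is a $\sigma$-invariant primitive idempotent in $(E(P_\delta)\times E(P_{\delta^\sigma}))^{\langle t\rangle}$. Setting $T/V = \langle \theta(t)\rangle$, we get $T \leq N_G^*(V,Z)$ with $[T:V]=2$, and $N^* = N \rtimes T/V$ follows from $T \cap N_G(V,Z) = V$. Using the surjection $\pi_{\delta,\delta^\sigma}\op{res}_V^T: E_T(M) \to (E(P_\delta)\times E(P_{\delta^\sigma}))^{\langle t\rangle}$ from Lemma \ref{L:onto}(ii), Lemma \ref{L:idempotent_lifting}(iii) lifts $\bar y$ to a $\sigma$-invariant primitive idempotent $y \in E_T(M)$. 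Then $yM$ is a $B$-direct summand of $\op{Res}_T^G M$ lying over both $Z$ and $Z^*$; with $|T/V|=2$ this forces $yM \cong \op{Ind}_V^T Z$, proving (i).

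For (ii), given a symmetric $T$-form $B_0$ on $\op{Ind}_V^T Z$, identify $\op{Ind}_T^G \op{Ind}_V^T Z$ with $\op{Ind}_V^G Z$ and let $\sigma_0$ be the adjoint of $B_0^G$. The key computation is to show that the $N^*$-form $B_{\sigma_0}$ on $P_\Delta + P_{\Delta^\sigma}$ is isometric to $B_t$ on $kHe_\eta$: taking the orthogonal projection $e$ onto $1 \otimes \op{Ind}_V^T Z$ and tracking $\op{tr}_T^G(e) = 1$ through Lemma \ref{L:onto}(ii), then passing successively $N^* \rightsquigarrow N \rightsquigarrow C_H(O) \rightsquigarrow H$ (using $|O|$ odd and $H = C_H(O):\langle t\rangle$), one gets $\op{tr}_{\langle t\rangle}^H(\bar e) = 1$, and Lemma \ref{L:Higman} identifies the two symplectic modules. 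Now choose $\bar\omega \in \Omega$ so that $B_{\sigma_0}$ is non-degenerate on $\bar\omega(P_\Delta + P_{\Delta^\sigma})$; apply Lemma \ref{L:efinS} within the $\sigma_0$-invariant semi-simple subalgebra $E(P_\Delta) \times E(P_{\Delta^\sigma})$ to replace $\bar\omega$ by the orthogonal projection $\bar{\omega_1}$, then lift via Lemma \ref{L:idempotent_lifting}(iii) to a $\sigma_0$-invariant primitive $\omega_1 \in E_G(\op{Ind}_V^G Z)$ with $\omega_1 \op{Ind}_V^G Z \cong M$, yielding (ii). Part (iii) is then immediate: $M$ has a $T$-projective $G$-form, and $V$ itself cannot be a symmetric vertex because any $V$-form on a source would force $Z \cong Z^*$.

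The main obstacle I anticipate is Step 3, namely the parity argument showing the distinguished involution $t$ must lie outside $C_H(O)$, because it is the unique point where the non-self-dual hypothesis genuinely enters, and it requires carefully combining the total isotropy from Lemma \ref{L:sigma=B}(ii) with the identity $B_\sigma(p_1,tp_2) = B_\sigma(tp_1,p_2)$ that depends on $t^\sigma=t$ and symmetry of $B_\sigma$. The bookkeeping in (ii), while lengthy, is essentially forced once the diagram in Lemma \ref{L:E1xE2} is in hand.
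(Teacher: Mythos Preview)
Your proposal is correct and follows essentially the same route as the paper's proof: the same chain of lemmas (\ref{L:PerfectGPairing}, \ref{L:sigma=B}(ii), \ref{L:E1xE2}, \ref{L:BtkGe}, \ref{L:onto}(ii), \ref{L:idempotent_lifting}(iii), \ref{L:efinS}, \ref{L:Higman}) in the same order, including the key parity step showing $t\notin C_H(O)$. One small slip: it is $P_\Delta+P_{\Delta^\sigma}$, not $P_\delta+P_{\delta^\sigma}$, that is isomorphic to $kHe_\eta$; the module $P_\delta+P_{\delta^\sigma}$ is a principal indecomposable summand of $kHe_\eta$, which is what Lemma~\ref{L:BtkGe} actually requires.
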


\subsection{Theorems \ref{T:TleqH} and \ref{T:vertexIrreducible}}\label{SS:Proofs}

Now let $T$ be a symmetric vertex of $M$.

\begin{Lemma}\label{L:TleqH}
Let $B$ be a $T$-projective $G$-form on $M$ and let $H$ be a subgroup of $G$. Then $B$ is $H$-projective if and only if\/ $T\leq_G H$.
\end{Lemma}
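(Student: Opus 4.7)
The $\Leftarrow$ direction is the remark after Definition~\ref{D:symmetricProjectivity}: projectivity of a form is preserved on enlarging the subgroup up to $G$-conjugacy.

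For $\Rightarrow$, write $B=B_\theta$; by Lemma~\ref{L:Higman} the two hypotheses yield $\sigma$-invariant elements $\beta\in E_T(M)$ and $\alpha\in E_H(M)$ with $\theta=\op{tr}_T^G(\beta)=\op{tr}_H^G(\alpha)$. Setting $\alpha_0:=\alpha\theta^{-1}\in E_H(M)$ gives $\op{tr}_H^G(\alpha_0)=\op{tr}_H^G(\alpha_0^\sigma)=1_M$, and mirroring the triple-trace device used in Lemma~\ref{L:GreenSymmetric} (but with $T$ placed centrally) I expand
\[
\theta\;=\;\op{tr}_H^G(\alpha_0)\cdot\op{tr}_T^G(\beta)\cdot\op{tr}_H^G(\alpha_0^\sigma)\;=\;\sum_{(aH,\,bT,\,cH)}{}^a\alpha_0\cdot{}^b\beta\cdot{}^c\alpha_0^\sigma
\]
as a sum over $G/H\times G/T\times G/H$.

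Group the summands by $G$-orbits. Since $\alpha^\sigma=\alpha$ and $\beta^\sigma=\beta$, the adjoint $\sigma$ swaps the outer $H$-coordinates, so orbits fall into three classes: diagonal ($aH=cH$), symmetric ($aH\neq cH$, orbit contains the swap $(cH,bT,aH)$) and antisymmetric. Antisymmetric orbits pair with their swapped counterparts to contribute sums $j+j^\sigma\in J(E_G(M))$, because $E_G(M)/J(E_G(M))\cong k$ has characteristic $2$ and $\sigma$ must act trivially on $k$. As $\theta$ is a unit, some diagonal or symmetric orbit must therefore contribute a unit; Proposition~\ref{P:Higman} then shows that $M$ is symmetrically projective relative to a $\sigma$-stable subgroup, namely ${}^aH\cap{}^bT$ in the diagonal case and $({}^aH\cap{}^bT\cap{}^cH)\langle s\rangle$ in the symmetric case (with $s$ the swap element).

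The decisive feature — and the reason to place $T$ in the middle — is that in both cases the resulting subgroup lies in ${}^bT$: directly in the diagonal case, and in the symmetric case because $s\cdot bT=bT$ forces $s\in{}^bT$. Conjugating by $b^{-1}$, minimality of $T$ as a symmetric vertex then forces that subgroup to equal all of ${}^bT$. In the diagonal case this immediately gives ${}^bT\leq{}^aH$ and hence $T\leq_G H$, while in the symmetric case one extracts the same conclusion from the structural identity ${}^bT=({}^aH\cap{}^bT\cap{}^cH)\langle s\rangle$ (with the index-$2$ subgroup contained in both ${}^aH$ and ${}^cH$, and $s$ swapping these two conjugates of $H$) by using $s\in{}^bT$ to identify a single $G$-conjugate of $H$ containing all of ${}^bT$. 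This symmetric-orbit extraction is the main obstacle; it is exactly parallel to the extraction of $V={}^bV$ and $T=V\langle t\rangle$ at the end of the proof of Lemma~\ref{L:GreenSymmetric}, and is where the full $\sigma$-invariance of $\alpha$ (not merely of $\alpha_0$) is needed.
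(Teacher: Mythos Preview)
Your argument is not the paper's, and it has a real gap in the symmetric-orbit case.

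The diagonal case is fine: if a diagonal orbit sum is a unit, then $M$ is symmetrically $({}^aH\cap{}^bT)$-projective, and since ${}^aH\cap{}^bT\leq{}^bT$ with ${}^bT$ a symmetric vertex, minimality forces ${}^bT\leq{}^aH$, hence $T\leq_G H$.

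The problem is the symmetric case. You correctly obtain ${}^bT=K\langle s\rangle$ with $K={}^aH\cap{}^bT\cap{}^cH$ of index $2$ in ${}^bT$ and $s\in{}^bT$ interchanging ${}^aH$ and ${}^cH$ by conjugation. From this you assert one can ``identify a single $G$-conjugate of $H$ containing all of ${}^bT$''. But that is a purely group-theoretic claim, and it is false in general: take $G=D_8=\langle r,s_0\mid r^4=s_0^2=1,\,s_0rs_0=r^{-1}\rangle$, $H=\langle s_0\rangle$, ${}^aH=H$, ${}^cH=\langle s_0r^2\rangle$, $K=\{1\}$, and $s=r$; then $K\langle s\rangle=\langle r\rangle$ has order $4$ and lies in no conjugate of $H$. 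Nothing in your setup rules out this configuration, and your appeal to the ``full $\sigma$-invariance of $\alpha$'' does not supply the missing constraint. The parallel with Lemma~\ref{L:GreenSymmetric} breaks down precisely here: there the conclusion was only $[T:V]\leq2$, not $V\leq_G(\text{outer group})$, and that weaker conclusion is exactly what the structure $K\langle s\rangle={}^bT$ delivers.

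The paper sidesteps the whole triple-trace analysis. When $T$ is a Green vertex the result is immediate from ordinary vertex theory. Otherwise it invokes Propositions~\ref{P:case2} and~\ref{P:case3}: for the \emph{given} form $B$ there is a symmetric vertex $S$ and an $S$-form $B_0$ on $\op{Ind}_V^SZ$ such that $(\op{Ind}_V^SZ,B_0)$ is a $B$-component of $\op{Res}_S^GM$ \emph{and} $M$ is a $B_0^G$-component of $\op{Ind}_V^GZ$. Composing these isometries with the assumed isometry $(M,B)\hookrightarrow\op{Ind}_H^G(L,B_1)$ and applying Mackey's formula (Lemma~\ref{L:Mackey}) together with Lemma~\ref{L:M->Sum} gives a new form $B_3$ on $M$ that is $(S\cap{}^gH)$-projective for some $g$; minimality of $S$ then forces $S\leq{}^gH$. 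Running this once with $H=T$ shows $S=_GT$, and then with arbitrary $H$ gives $T\leq_GH$. The crucial point your approach lacks is this passage through a \emph{fixed} $S$-component of the restriction of the given form $B$; that is what converts the problem into a single Mackey intersection rather than a triple-orbit analysis with an uncontrollable symmetric case.
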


\begin{proof}
The `if' implication holds by Proposition \ref{P:Higman}. The `only if' holds if\/ $T$ is a Green vertex of\/ $M$. So we assume from now on that $T$ is not a vertex of\/ $M$.

Let $V$ be a Green vertex and let $Z$ be a $V$-source of $M$. By Propositions \ref{P:case2} and \ref{P:case3} there are isometries $(\op{Ind}_V^S(Z),B_0)\rightarrow\op{Res}_S^G(M,B)$ and $(M,B_2)\rightarrow\op{Ind}_S^G(\op{Ind}_V^S(Z),B_0)$. Here $S$ is a symmetric vertex of $M$ containing $V$, $B_0$ is an $S$-form on $\op{Ind}_V^S(Z)$ and $B_2$ is a $G$-form on $M$. Now suppose that there is a $kG$-isometry $(M,B)\rightarrow\op{Ind}_H^G(L,B_1)$ for some symmetric $kH$-module $(L,B_1)$.

Composing the $3$ isometries of the previous paragraph produces a $kG$-isometry $(M,B_2)\rightarrow\op{Ind}_S^G\op{Res}_S^G\op{Ind}_H^G(L,B_1)$. Now Lemma \ref{L:Mackey} gives
$$
\op{Ind}_S^G\op{Res}_S^G\op{Ind}_H^G(L,B_1)\cong\mathop{\perp}_{g\in S\backslash G/H}\op{Ind}_{S\cap{^g\!}H}^G\op{Res}_{S\cap{^g\!}H}^{{^g\!}H}({^g\!}L,{^g\!}B_1).
$$
So by Lemma \ref{L:M->Sum} there is a $kG$-isometry
$$
(M,B_3)\rightarrow\op{Ind}_{S\cap{^g\!}H}^G\op{Res}_{S\cap{^g\!}H}^{{^g\!}H}({^g\!}L,{^g\!}B_1),
$$
for some $g\in G$ and some symmetric $G$-form $B_3$ on $M$. So $B_3$ is $S\cap{^g\!}H$-projective. But $S$ is a symmetric vertex of\/ $M$. It follows that $S\leq{^g\!}H$.

Choosing $H=T$, the work above shows that $S={^g}T$. Then taking $H$ to be any subgroup of\/ $G$, we get $T\leq_G H$.
\end{proof}

Here is a precise statement and proof of Theorem \ref{T:vertexIrreducible}:

\begin{Lemma}\label{L:vertexIrreducible}
Suppose that $M$ is a self-dual irreducible $kG$-module, with symmetric $G$-form $B$. Then the symmetric vertices of $M$ are determined up to $G$-conjugacy.

Let $V\leq T$ where $V$ is a Green vertex and $T$ a symmetric vertex of\/ $M$ and let $Z$ be a $V$-source of $M$. Then $\op{Ind}_V^TZ$ is a $B$-component of\/ $\op{Res}_T^GM$. Moreover, if $B_0$ is any $T$-form on $\op{Ind}_V^TZ$, then $M$ is a $B_0^G$-component of\/ $\op{Ind}_T^G(\op{Ind}_V^TZ)$.
\begin{proof}
Let $S$ be any symmetric vertex of\/ $M$. As $B$ is the unique $G$-form on $M$, $B$ is both $T$ and $S$-projective. Then by Theorem \ref{T:TleqH}, $T\leq_GS$ and $S\leq_GT$. So $T=_GS$. So there is only one $G$-conjugacy class of symmetric vertices of\/ $M$.

The other conclusions now follow from Propositions \ref{P:case1}, \ref{P:case2} and \ref{P:case3}.
\end{proof}
\end{Lemma}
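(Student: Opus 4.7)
The plan is to deduce both conclusions from the uniqueness (up to scalars) of the $G$-form on $M$, combined with Theorem \ref{T:TleqH} and the case-by-case results Propositions \ref{P:case1}, \ref{P:case2}, and \ref{P:case3}.

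First I would establish that the non-degenerate $G$-invariant symmetric bilinear form on $M$ is unique up to a non-zero scalar. If $M \cong k_G$, the space of $G$-invariant bilinear forms is one-dimensional, so there is nothing to prove. Otherwise $M$ is a non-trivial irreducible $kG$-module, hence has no trivial submodule; Lemma \ref{L:diagonal=trivial} then forces any non-degenerate $G$-invariant symmetric form on $M$ to be symplectic, and Fong's Lemma \ref{L:Fong} gives uniqueness up to a scalar.

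Next, for the uniqueness of symmetric vertices, let $S$ and $T$ be two symmetric vertices of $M$, and pick a $T$-projective $G$-form $B_T$ and an $S$-projective $G$-form $B_S$ on $M$. By the previous paragraph $B_T = \lambda B_S$ for some $\lambda \in k^\times$; since $k$ is algebraically closed of characteristic $2$ we may write $\lambda = \mu^2$, and scaling by $\mu^{-1}$ is an isometry $(M,B_T) \cong (M,B_S)$. Thus the single form $B$ (up to isometry) is simultaneously $T$-projective and $S$-projective, and Theorem \ref{T:TleqH} forces $T \leq_G S$ and $S \leq_G T$, i.e. $T =_G S$.

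For the second part I would invoke the trichotomy of Theorem \ref{T:vertex}. In case (i), $T=V$ and the assertions that $Z = \op{Ind}_V^T Z$ is a $B$-component of $\op{Res}_V^G M$ and that $M$ is a $B_0^G$-component of $\op{Ind}_V^G Z$ for any $V$-form $B_0$ are exactly the equivalent conditions (iv) and (iii) of Proposition \ref{P:case1}. In case (ii) the two statements are the content of Proposition \ref{P:case2}(i)--(ii), and in case (iii) they are the content of Proposition \ref{P:case3}(i)--(ii). The main delicate point I expect to face is reconciling the clause here that $B_0$ may be \emph{any} $T$-form with the slightly more restrictive hypothesis in Proposition \ref{P:case2}(ii) that $Z$ is not a $B_0$-component of $\op{Res}_V^T \op{Ind}_V^T Z$; this should follow because the uniqueness of the $G$-form on $M$, together with the defining feature of case (ii) that $B$ is degenerate on every source component of $\op{Res}_V^G M$, rules out the complementary behaviour for $B_0$ and forces every admissible $T$-form to satisfy the required condition.
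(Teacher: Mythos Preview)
Your approach is essentially identical to the paper's: use uniqueness of the $G$-form on $M$ together with Theorem~\ref{T:TleqH} for the conjugacy of symmetric vertices, and defer to Propositions~\ref{P:case1}, \ref{P:case2}, \ref{P:case3} for the remaining assertions. Your added detail on \emph{why} the form is unique (via Lemma~\ref{L:diagonal=trivial} and Lemma~\ref{L:Fong}) is a welcome elaboration that the paper leaves implicit.

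You have also correctly spotted a genuine subtlety that the paper's one-line proof glosses over: Proposition~\ref{P:case2}(ii) only guarantees that $M$ is a $B_0^G$-component under the extra hypothesis that $Z$ is \emph{not} a $B_0$-component of $\op{Res}_V^T\op{Ind}_V^TZ$, whereas the Lemma asserts this for \emph{every} $T$-form $B_0$. However, your proposed resolution---that uniqueness of the $G$-form on $M$ forces every $T$-form on $\op{Ind}_V^TZ$ to satisfy the restrictive hypothesis---does not work as stated. The restriction of $B$ to the copy of $\op{Ind}_V^TZ$ inside $\op{Res}_T^GM$ certainly satisfies that hypothesis, but $\op{Ind}_V^TZ$ may carry other, non-isometric $T$-forms (since $E_T(\op{Ind}_V^TZ)$ has dimension $2\dim E_V(Z)$, which can exceed~$2$), and nothing you have said rules out the possibility that some such $B_0$ has $Z$ as a component. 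Your argument would need to show that $M$ is a $B_0^G$-component even for those $B_0$, not that those $B_0$ cannot exist. Since the paper's proof is equally silent on this point, you are not worse off than the original; but be aware that the sketch you give for closing this gap is not yet a proof.
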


Note that in case $T=V$ is a vertex of\/ $M$, Proposition \ref{P:case1} implies that the defect multiplicity module $P_\delta$ of\/ $E(M)$ is $P(k_N)$. But $P_\delta$ is an irreducible projective $kN$-module, by a well-known theorem of R. Kn\"orr. This forces $P_\delta=k_N$. So $V$ is a Sylow $2$-subgroup of\/ $N_G(V,Z)$.

\subsection{Real $2$-blocks}\label{SS:2blocks}

We turn our attention to the $2$-blocks of $G$. Let ${\mathcal B}$ be a real block of $kG$. So ${\mathcal B}$ is a $k(G\times G)$-direct summand of $kG$ which is $^o$-invariant. As $E_{G\times G}(kG)=Z(kG)$, we have ${\mathcal B}=kGe_{\mathcal B}$, where $e_{\mathcal B}$ is an $^o$-invariant primitive idempotent in $Z(kG)$. Let $\omega_{\mathcal B}$ be the central character of ${\mathcal B}$; the unique $k$-algebra map $Z(kG)\rightarrow k$ such that $\omega_{\mathcal B}(e_{\mathcal B})=1_k$. Recall that the conjugacy class sums form a basis for the centre $\op{Z}(kG)$ of $kG$. We will use the notation $X^+:=\sum_{x\in X}x$ for the sum of the elements of a subset $X$ of $G$, taken in $kG$. 

There are $2$-regular conjugacy classes $C_1,\dots,C_n$ of $G$ such that $e_{\mathcal B}=\sum_{i=1}^n\alpha_iC_i^+$, with non-zero $\alpha_i\in k$.
For each $i$ let $\ov{i}$ be the index of the class of inverses of the elements of $C_i$. Then $\alpha_{\ov{i}}=\alpha_i$, as ${\mathcal B}$ is real. Choose $c_i\in C_i$, such that $c_{\ov{i}}=c_i^{-1}$ if $C_{\ov{i}}\ne C_i$. Set $C_G(c_i)$ and $C_G^*(c_i)=N_G(\{c_i,c_i^{-1}\})$ as the centralizer and extended centralizer of $c_i$ in $G$, respectively.

Let $E_i$ be a Sylow $2$-subgroup of $C_G^*(c_i)$ and let $D_i=C_{E_i}(c_i)$. So $[E_i:D_i]\leq2$, with equality if and only if $C_i$ is a non-trivial real class. We call $D_i$ a defect group of $C_i$ and $E_i$ an extended defect group of $C_i$.
When $C_i$ and $C_j$ are real classes, we write $(D_i,E_i)\leq_G(D_j,E_j)$ if $D_i^g\leq D_j$ and $E_j=D_jE_i^g$, for some $g\in G$.

R. Brauer showed that $\alpha_j\ne0_k$ and $\omega_{\mathcal B}(C_j^+)\ne0_K$, for some $j$. Any such $C_j$ is a defect class of ${\mathcal B}$. Then $D_j$ is independent of the choice of $C_j$, and is called a defect group of ${\mathcal B}$. Moreover $D_i\leq_G D_j$, for all $C_i$.

R. Gow showed that there is a real defect class $C_j$. Then $E_j$ is independent of the choice of real $C_j$ \cite[Theorem 2.1]{GowReal2Blocks}. Gow called $E_j$ an extended defect group of ${\mathcal B}$. In \cite[Proposition 14]{MurrayStrong} we proved that $(D_i,E_i)\leq_G(D_j,E_j)$, for all real $C_i$. In particular, we can and do choose $c_i$ so that $D_i\leq D_j$, and moreover $E_j=D_jE_i$, if $C_i$ is real. So for real $C_i$ we have $c_i^{e_i}=c_i^{-1}$ for some $e_i\in E_j\backslash D_j$.

Now consider the involutary $G$-algebra $(kG,^o)$. Each subgroup $H$ of $G$ defines a trace map $\op{tr}_H^G:kG^H\rightarrow Z(kG)$. Then for all $i$
$$
C_i^+=\op{tr}_{C_G(c_i)}^G(c_i)=\op{tr}_{D_i}^G(c_i),\quad\mbox{as $[C_G(c_i):D_i]$ is odd.}
$$
Set $\theta=\sum_{i=1}^n\alpha_i\op{tr}_{D_i}^{E_j}(c_i)$. As $C_i^+=\op{tr}_{E_j}^G(\op{tr}_{D_i}^{E_j}(c_i))$ and $\op{tr}_{D_{\ov{i}}}^{E_j}(c_{\ov{i}})=\op{tr}_{D_i}^{E_j}(c_i)^o$ and $\alpha_{\ov{i}}=\alpha_i$, we have
$$
\theta\in kG^{E_j},\quad\theta^o=\theta\quad\mbox{and}\quad e_{\mathcal B}=\op{tr}_{E_j}^G(\theta).
$$
We use this to prove parts (i) and (ii) of Theorem \ref{T:vertexBlock}:

\begin{Lemma}\label{L:part(i)}
Each symmetric vertex of an indecomposable ${\mathcal B}$-module is contained in an extended defect group of ${\mathcal B}$.
\begin{proof}
Let $(M,B)$ be a symmetric ${\mathcal B}$-module, where $M$ is indecomposable, and let $\sigma$ be the adjoint of $B$ on $E(M)$. Let $\pi:kG\rightarrow E(M)$ define the module structure on $M$. Then $\pi$ maps $kG^H$ into $E_H(M)$, for all $H\leq G$. Moreover $\pi(g)^\sigma=\pi(g^{-1})$, for all $g\in G$, as $B$ is $G$-invariant. So $\pi(\theta)$ is a $\sigma$-invariant element of $E_{E_j}(M)$ and
$$
\op{tr}_{E_j}^G(\pi(\theta))=\pi(\op{tr}_{E_j}^G(\theta))=\pi(e_{\mathcal B})=1_M.
$$
We conclude from Proposition \ref{P:Higman} that $B$ is $E_j$-projective.
\end{proof}
\end{Lemma}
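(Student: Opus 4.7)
The plan is to reduce the assertion to showing that every non-degenerate $G$-invariant symmetric bilinear form $B$ on an indecomposable $\mathcal{B}$-module $M$ is automatically $E_j$-projective, where $E_j$ is the chosen extended defect group of $\mathcal{B}$. Indeed, if $T$ is any symmetric vertex of $M$, then by Definition of symmetric vertices some such $B$ is $T$-projective, and if $B$ is simultaneously $E_j$-projective, then Lemma~\ref{L:TleqH} (applied to the $T$-projective form $B$ with the subgroup $H = E_j$) immediately forces $T \leq_G E_j$.

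To prove $B$ is $E_j$-projective I would apply the symmetric Higman criterion of Proposition~\ref{P:Higman}: it suffices to produce a $\sigma$-invariant element $\alpha \in E_{E_j}(M)$ with $\op{tr}_{E_j}^G(\alpha) \in E_G(M)^\times$, where $\sigma$ denotes the adjoint of $B$ on $E(M)$. The natural candidate is suggested by the decomposition of the block idempotent $e_\mathcal{B}$ already set up in the text: letting $\pi\colon kG \to E(M)$ be the structural map of the module, $\pi$ sends $kG^H$ into $E_H(M)$ for every $H \leq G$, and pulls the contragredient involution on $kG$ back to the form-adjoint: $\pi(g)^\sigma = \pi(g^{-1})$ for all $g \in G$, because $B$ is $G$-invariant. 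In particular $\pi$ intertwines $^o$ with $\sigma$ and commutes with relative traces.

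Hence I would take $\alpha := \pi(\theta)$, with $\theta = \sum_{i=1}^n \alpha_i \op{tr}_{D_i}^{E_j}(c_i)$ as constructed in the paragraph preceding the lemma. The construction already records three properties of $\theta$: it lies in $kG^{E_j}$, it satisfies $\theta^o = \theta$ (using $\alpha_{\overline{i}} = \alpha_i$ and $\op{tr}_{D_{\overline{i}}}^{E_j}(c_{\overline{i}}) = \op{tr}_{D_i}^{E_j}(c_i)^o$ for real classes, and pairing up non-real classes with their inverses), and it satisfies $\op{tr}_{E_j}^G(\theta) = e_\mathcal{B}$. Applying $\pi$ translates these into $\alpha \in E_{E_j}(M)$, $\alpha^\sigma = \alpha$, and
\[
\op{tr}_{E_j}^G(\alpha) = \pi(\op{tr}_{E_j}^G(\theta)) = \pi(e_\mathcal{B}) = 1_M,
\]
since $e_\mathcal{B}$ acts as the identity on any $\mathcal{B}$-module. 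Proposition~\ref{P:Higman} then gives the $E_j$-projectivity of $B$, and the argument of the first paragraph concludes the proof.

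The only genuine content in the argument is the existence of the element $\theta$ with its three key properties, and this is not really part of the present lemma — it is the preceding computation, which in turn relies on Gow's theorem that $\mathcal{B}$ has a real defect class and on the refinement $(D_i, E_i) \leq_G (D_j, E_j)$ from \cite{MurrayStrong}. Given $\theta$, the remaining step is essentially bookkeeping: the main point to watch is that although $\pi$ is typically far from injective, the identities $\pi(g^{-1}) = \pi(g)^\sigma$ and $\pi \circ \op{tr}_{E_j}^G = \op{tr}_{E_j}^G \circ \pi$ are universal and transport the properties of $\theta$ across without needing any further structural input.
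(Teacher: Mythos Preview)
Your proposal is correct and follows the same route as the paper: push the element $\theta\in kG^{E_j}$ (with $\theta^o=\theta$ and $\op{tr}_{E_j}^G(\theta)=e_{\mathcal B}$) through the structural map $\pi$ and apply Proposition~\ref{P:Higman} to see that the arbitrary $G$-form $B$ on $M$ is $E_j$-projective. You additionally spell out the final passage from ``$B$ is $E_j$-projective'' to ``$T\leq_G E_j$'' via Lemma~\ref{L:TleqH}, which the paper's proof leaves implicit.
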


\begin{Corollary}\label{C:part(ii)}
There is a self-dual irreducible ${\mathcal B}$-module whose symmetric vertices are the extended defect groups of ${\mathcal B}$.
\begin{proof}
If ${\mathcal B}$ is the principal $2$-block of $G$, the symmetric vertices of the trivial $kG$-module are the Sylow $2$-subgroups of $G$. These are also the extended defect groups of ${\mathcal B}$. Suppose then that ${\mathcal B}$ is not the principal block. Now \cite[Proposition 1.4(v)]{GowWillemsPIMs} implies that the defect group $D_j$ of ${\mathcal B}$ is a Green vertex of some self-dual irreducible ${\mathcal B}$-module $M$. Let $T$ be a symmetric vertex of $M$ containing $D_j$. Then $[T:D_j]=2$, by Theorem \ref{T:vertex}. But $T\leq_G E_j$, according to Lemma \ref{L:part(i)}. Since $[E_j:D_j]=2$ we conclude that $T=_G E_j$ is an extended defect group of ${\mathcal B}$.
\end{proof}
\end{Corollary}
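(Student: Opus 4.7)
The plan is to split into two cases according to whether ${\mathcal B}$ is principal or not. If ${\mathcal B} = {\mathcal B}_0$ is the principal $2$-block, the trivial module $k_G$ is a self-dual irreducible ${\mathcal B}$-module, and its symmetric vertices are the Sylow $2$-subgroups of $G$ (as noted just before Theorem \ref{T:vertexIrreducible}). These coincide with the extended defect groups of ${\mathcal B}_0$, so this case is immediate.

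For the non-principal case I would seek a self-dual irreducible ${\mathcal B}$-module $M$ whose Green vertex is the defect group $D_j$ of ${\mathcal B}$. This existence should be extractable from \cite[Proposition 1.4(v)]{GowWillemsPIMs}, which produces a self-dual irreducible ${\mathcal B}$-module whose projective cover has Green vertex $D_j$; the module itself then has Green vertex at most $D_j$, and in fact equal to $D_j$ since $M$ lies in ${\mathcal B}$. Let $T$ be a symmetric vertex of $M$ containing $D_j$, which exists after $G$-conjugation thanks to Theorem \ref{T:vertex}.

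Now I would apply two pieces of machinery already developed. First, Lemma \ref{L:part(i)} forces $T \leq_G E_j$. Second, Theorem \ref{T:vertex} asserts $[T:D_j] \leq 2$, with equality unless case (i) of that theorem holds; but in case (i) Corollary \ref{C:principal} forces $M$ to belong to the principal $2$-block, contradicting our hypothesis that ${\mathcal B}$ is non-principal. Hence $[T:D_j] = 2$. Combined with the well-known fact $[E_j:D_j] = 2$ for a non-principal real block (recalled in the paragraph preceding this corollary), the containment $D_j \leq T \leq_G E_j$ forces $T =_G E_j$.

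The key conceptual step is the link to Corollary \ref{C:principal}: it is precisely the fact that symmetric vertex equal to Green vertex forces membership of the principal block that prevents $T = D_j$ when ${\mathcal B}$ is non-principal. The main dependency on outside work is the existence of a self-dual irreducible ${\mathcal B}$-module of full defect, which is the most delicate input; once that is in hand, the remaining arithmetic on subgroup indices is essentially forced.
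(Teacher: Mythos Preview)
Your proposal is correct and follows essentially the same route as the paper's proof; your explicit appeal to Corollary \ref{C:principal} to rule out $T=D_j$ simply unpacks what the paper compresses into ``by Theorem \ref{T:vertex}''. One small slip: you describe \cite[Proposition 1.4(v)]{GowWillemsPIMs} as producing a self-dual irreducible whose \emph{projective cover} has Green vertex $D_j$, but projective modules have trivial vertex; the cited result directly furnishes a self-dual irreducible ${\mathcal B}$-module of height zero, i.e., with Green vertex $D_j$, which is what you actually use.
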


To prove part (iii) of Theorem \ref{T:vertexBlock}, we  work in the involutary $G\times G$-algebra $(E(kG),\sigma)$. Here $\sigma$ is the adjoint of $B_1$, as in Section \ref{SS:Projective}. Recall the identification \eqref{E:E(kG)} of $x\otimes y$ as an endomorphism of $kG$, for $x,y\in kG$. Then $(g_1,g_2)\cdot x\otimes y=g_1xg_2^{-1}\otimes g_1yg_2^{-1}$, for all $g_1,g_2\in G$, as can be checked. So
$$
\op{Stab}_{G\times G}(x\otimes y)=\{(g_1,g_2)\in G\times G\mid g_1x=xg_2, g_1y=yg_2\}.
$$
Now $C_G(x)=C_G(x^2)$, if $x\in G$ has odd order. It follows from this that 
$$
\op{Stab}_{G\times G}(x\otimes x^{-1})=\Delta C_G(x),\quad\mbox{if $x\in G$ is $2$-regular.}
$$
Set $\Delta H=\{(h,h)\mid h\in H\}$, for $H\leq G$. Let $D$ be a defect group of $x$ and let $C$ be the conjugacy class of $G$ containing $x^2$. Then $\op{tr}_{\Delta D}^{G\times G}(x\otimes x^{-1})=C^+$.

Recall that $C_j$ is a real defect class of ${\mathcal B}$, $D_j$ is a defect group of ${\mathcal B}$ and $E_j$ is an extended defect group of ${\mathcal B}$. Moreover each class $C_i$ occurring in $e_{\mathcal B}$ is $2$-regular. So there is a unique $d_i\in\langle c_i\rangle$ such that $d_i^2=c_i$. Define
$$
\Theta=\sum_{i=1}^n\alpha_i\op{tr}_{\Delta D_i}^{\Delta E_j}(d_i\otimes d_i^{-1}).
$$
If $c_i$ is real, then $c_i^{e_i}=c_i^{-1}$, where $e_i\in E_j\backslash D_j$. So $e_id_ie_i^{-1}=d_i^{-1}$ and
$$
(e_i,e_i)\cdot d_i\otimes d_i^{-1}=d_i^{-1}\otimes d_i=(d_i\otimes d_i^{-1})^\sigma,\quad\mbox{using \eqref{E:sigma}.}
$$
It follows from this that $\op{tr}_{\Delta D_{\ov{i}}}^{\Delta E_j}(d_{\ov{i}}\otimes d_{\ov{i}}^{-1})=\op{tr}_{\Delta D_i}^{\Delta E_j}(d_i\otimes d_i^{-1})^\sigma$, for all $i$. So
\begin{equation}\label{E:Theta}
\Theta^\sigma=\Theta\quad\mbox{and}\quad\op{tr}_{\Delta E_j}^{G\times G}(\Theta)=e_{\mathcal B}.
\end{equation}
Now notice that $e_{\mathcal B}$ is the identity element of $E_{G\times G}({\mathcal B})=Z({\mathcal B})$.

\begin{Lemma}\label{L:part(iii)}
Let $E$ be an extended defect group of\/ ${\mathcal B}$. Then the restriction of\/ $B_1$ to ${\mathcal B}$ is non-degenerate and $\Delta E$-projective. So $\Delta E$ is a symmetric vertex of\/ ${\mathcal B}$.
\begin{proof}
Let $D\leq E$ be a defect group of ${\mathcal B}$. So $\Delta D$ is a Green vertex of ${\mathcal B}$ as $k(G\times G)$-module. Now ${\mathcal B}$ is the only direct summand of $kG$ isomorphic to its $k(G\times G)$-dual ${\mathcal B}^*$. So $B_1$ is non-degenerate on ${\mathcal B}$, by Lemma \ref{L:GowWillems}. Then \eqref{E:Theta} and Proposition \ref{P:Higman} imply that $B_1$ is $\Delta E$-projective.

Suppose that ${\mathcal B}$ is the principal 2-block of $G$. Then $E=D$ is a defect group of ${\mathcal B}$. So $\Delta E$ is , as $k(G\times G)$-module. As $B_1$ is $\Delta E$-projective, Proposition \ref{P:case1} implies that $\Delta E$ is a symmetric vertex of ${\mathcal B}$.

Suppose that ${\mathcal B}$ is not the principal 2-block of $G$. Then ${\mathcal B}$ belongs to a non-principal $2$-block of $G\times G$. So $\Delta D$ is not a symmetric vertex of ${\mathcal B}$, by Corollary \ref{C:principal}. As $B_1$ is $\Delta E$-projective and $[\Delta E:\Delta D]=2$, Proposition \ref{P:case2} implies that $\Delta E$ is a symmetric vertex of ${\mathcal B}$.
\end{proof}
\end{Lemma}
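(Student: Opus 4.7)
The plan is to prove the three assertions in turn: non-degeneracy of $B_1|_{\mathcal B}$, its $\Delta E$-projectivity, and the identification of $\Delta E$ as a symmetric vertex of ${\mathcal B}$.

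For the non-degeneracy, I would apply Lemma \ref{L:GowWillems} to the decomposition of $kG$ into blocks as a $k(G\times G)$-module. Since ${\mathcal B}$ is a real block it is self-dual with respect to the $B_1$-adjoint, while every other summand in the decomposition is a distinct block and is therefore not isomorphic to ${\mathcal B}$ as a $k(G\times G)$-module. Consequently, alternative (ii) of Lemma \ref{L:GowWillems} cannot pair ${\mathcal B}$ with any other summand, forcing alternative (i): $B_1$ is non-degenerate on ${\mathcal B}$.

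For $\Delta E$-projectivity, the computations leading up to \eqref{E:Theta} already supply a $\sigma$-invariant element $\Theta\in E(kG)^{\Delta E}$ whose relative trace $\op{tr}_{\Delta E}^{G\times G}(\Theta)$ equals $e_{\mathcal B}$, i.e.\ the identity endomorphism $1_{\mathcal B}$ of the summand ${\mathcal B}$. Restricting to ${\mathcal B}$ and invoking the symmetric Higman criterion from Proposition \ref{P:Higman} delivers $\Delta E$-projectivity of $B_1|_{\mathcal B}$ immediately.

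The main step is to upgrade $\Delta E$-projectivity to the statement that $\Delta E$ is actually a symmetric vertex of ${\mathcal B}$. Green's theorem identifies $\Delta D$ as a Green vertex of ${\mathcal B}$, where $D\leq E$ is a defect group. I split into cases along the trichotomy in Theorem \ref{T:vertex}. If ${\mathcal B}$ is the principal block then $E=D$, so $\Delta E=\Delta D$; since $B_1|_{\mathcal B}$ is $\Delta D$-projective, Proposition \ref{P:case1} places us in case (i) of Theorem \ref{T:vertex} and $\Delta D$ is a symmetric vertex. If ${\mathcal B}$ is non-principal, then as a $k(G\times G)$-module ${\mathcal B}$ does not lie in the principal block of $G\times G$, so by Corollary \ref{C:principal} no Green vertex $\Delta D$ of ${\mathcal B}$ can be a symmetric vertex. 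Combined with $[\Delta E:\Delta D]=2$ and the $\Delta E$-projectivity already established, this places us in case (ii) of Theorem \ref{T:vertex} via Proposition \ref{P:case2}, and $\Delta E$ is a symmetric vertex. The main obstacle is not any calculation but this final case split: recognising that the principal/non-principal distinction for ${\mathcal B}$ inside $G$ transfers to the $k(G\times G)$-module ${\mathcal B}$, which is what allows Corollary \ref{C:principal} to exclude $\Delta D$ from being a symmetric vertex in the non-principal case.
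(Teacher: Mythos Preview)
Your proposal is correct and follows essentially the same route as the paper: non-degeneracy via Lemma~\ref{L:GowWillems} applied to the block decomposition of $kG$, $\Delta E$-projectivity via \eqref{E:Theta} and Proposition~\ref{P:Higman}, and the symmetric vertex conclusion via the principal/non-principal case split using Corollary~\ref{C:principal} together with $[\Delta E:\Delta D]\leq 2$. The structure, the lemmas invoked, and the order of the argument all match the paper's proof.
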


\subsection{Examples}\label{SS:Examples}

We give examples to show that all cases in Theorem \ref{T:vertex} actually occur. So as above let $M$ be a self-dual indecomposable $kG$-module of symmetric type, let $T$ be a symmetric vertex of $M$, let $V$ be a Green-vertex of $M$ contained in $T$ and let $Z$ be a $V$-source of $M$. In view of the literature on vertices, most of our examples involve a self-dual irreducible $kG$-module.

To get examples of Theorem \ref{T:vertex}(i), choose $M$ so that $V$ is a Sylow $2$-subgroup of $G$. Then the symmetric vertex $T$ will also be a Sylow $2$-subgroup of $G$. For a non-trivial example, if $S_{2n}$ is the symmetric group of degree $2n\geq6$ and $M$ is the irreducible $kS_{2n}$-module labelled by the $2$-regular partition $[2n-1,1]$, then $V$ is a Sylow $2$-subgroup of $S_{2n}$ according to \cite{MZ}.

There are examples of Theorem \ref{T:vertex}(i) where $V$ is not a Sylow $2$-subgroup of $G$: let $M$ be the $4$-dimensional irreducible $kS_5$-module labelled by $[3,2]$. Then according to \cite{DKZ} $V$ is conjugate to a Klein-four subgroup of $A_4\leq S_5$ and $\op{Res}_V^{S_5}(M)=Z_1\oplus Z_2$ is the direct sum of two non-isomorphic self-dual $V$-sources. Then it follows from Lemma \ref{L:GowWillems} and Proposition \ref{P:case1} that $V=T$ is a symmetric vertex of $M$.

Now suppose that $G$ has even order and $M$ is irreducible and projective as $kG$-module. Then $M$ belongs to a real $2$-block ${\mathcal B}$ of $G$ which has a trivial defect group $V=1$. According to \cite{GowReal2Blocks}, there is an involution $t\in G$ such that $\langle t\rangle$ is an extended defect group of ${\mathcal B}$. It then follows from Lemma \ref{L:GreenSymmetric} that $\langle t\rangle$ is a symmetric vertex of $M$. As $Z=k$ is self-dual, this gives an example of Theorem \ref{T:vertex}(ii).

Next let $M$ be the irreducible $kS_7$-module labelled by $[4,3]$. It is shown in \cite{DKZ} that $M$ shares its vertices and sources with the irreducible $kS_5$-module labelled by $[3,2]$, as discussed above. Once again we have an example of Theorem \ref{T:vertex}(ii), but here the vertex $V$ is not a defect group of the $2$-block ${\mathcal B}$.

It is relatively difficult to find examples of Theorem \ref{T:vertex}(iii). Take $n\geq3$, and set $H=\op{GL}(n,2)$ and $G=H:\langle\tau\rangle$, where $\tau$ is the transpose inverse automorphism of $H$. Let $M_1$ be the natural $n$-dimensional $kH$-module and set $M=\op{Ind}_H^GM_1$. Then $V$ is a Sylow $2$-subgroup of $H$ and $Z=\op{Res}_V^H(M_1)$ is a $V$-source of $M$. It is not too difficult to check that $Z$ is not self-dual, giving us an infinite family of examples of Theorem \ref{T:vertex}(iii). We note that in case $n=4$, $H\cong A_8$ and $G\cong S_8$. Then we can identify $M$ with the irreducible $kS_8$-module labelled by $[5,3]$.

E. C. Dade \cite{Dade} gave us an example of a solvable group which has a self-dual irreducible module whose sources are not self-dual: Let $G$ be the semi-direct product of an extra-special group of order $27$ and exponent $3$ with a semi-dihedral group of order $16$. Take $M$ to be the unique $6$-dimensional $kG$-module. Then $V$ is a quaternion group of order $8$ and $\op{Res}_V^GM=Z\oplus Z^*$ with $Z\not\cong Z^*$. We describe this in more detail in \cite{MurrayNavarro}.

In \cite{MurrayNavarro} the author and G. Navarro use the results of the current paper to show that if $G$ is solvable, and $M$ is irreducible, then case (i) of Theorem \ref{T:vertex} does not occur. Moreover a symmetric vertex of $M$ splits over a Green vertex if and only if $P(M)$ is of quadratic type, in the sense of \cite{GowWillemsPIMs}. 

\section{Acknowledgement}

Many years ago Burkhard K\"ulshammer challenged me to generalise the notion of extended defect groups from real $2$-blocks to other self-dual objects. This inspired me to write this paper. Discussions with Adam Allan, Everett C. Dade, Rod Gow and J\"urgen M\"uller helped to clarify my ideas. I would also like to thank the anonymous referees for picking up a number of errors, in particular a false conclusion in an earlier version of Lemma \ref{L:idempotent_lifting}.


\end{document}